\font\cyr=wncyr10
\ProvideTextCommandDefault{\cprime}{\tprime}
\newtheorem{theorem}{Theorem}[section]
\newtheorem{proposition}[theorem]{Proposition}
\newtheorem{corollary}[theorem]{Corollary}
\newtheorem{question}[theorem]{Question}
\theoremstyle{definition}
\newtheorem{definition}[theorem]{Definition}
\theoremstyle{statement}
\newtheorem{problem}[theorem]{Problem}
\theoremstyle{remark}
\newtheorem{remark}[theorem]{Remark}
\numberwithin{equation}{section}
\newcommand{\CT}{\mathcal{T}}
\newcommand{\CS}{\mathcal{S}}
\def \N {\mathbb N}
\def \O {\mathcal O}
\def \D {\mathcal D}
\def \Z {\mathbb Z}
\def \EA {E_{\mathsf A}}
\def \F {\mathcal F}
\def \H {\mathcal H}
\def \EF {\mathsf E\mathcal F}
\def \EH {\mathsf E\mathcal H}
\def \Q {\mathcal Q}
\def \P {\mathcal P}
\def \M {\mathcal M}
\def \MGX {\mathcal M (X, G)}
\def \MGY {\mathcal M (Y, G)}
\def \htop {h_{\mathsf{top}}}
\def \sq {sequence}
\def \xg {$(X,G)$}
\def \tl {topological}
\def \im {invariant measure}
\def \inv {invariant}
\def \ds {dynamical system}
\def \htop{h_{\mathsf{top}}}
\def \Hom {\mathsf{Hom}}
\def \id {\mathsf{Id}}
\def \usc {upper semicontinuous}
\def \se {superenvelope}
\def \ens {entropy structure}
\def \zd {zero-dimen\-sio\-nal}
\def \CT {\mathcal{T}}
\def \eps {\varepsilon}
\def \N {\mathbb N}
\def \Z {\mathbb Z}
\def \T {\mathbb T}
\def \F {\mathcal F}
\def \H {\mathcal H}
\def \EF {\mathsf E\mathcal F}
\def \EH {\mathsf E\mathcal H}
\def \Q {\mathcal Q}
\def \P {\mathcal P}
\def \M {\mathcal M}
\def \htop {h_{\mathsf{top}}}
\def \sq {sequence}
\def \xg {$(X,G)$}
\def \tl {topological}
\def \im {invariant measure}
\def \inv {invariant}
\def \ds {dynamical system}
\def \htop{h_{\mathsf{top}}}
\def \Hom {\mathsf{Hom}}
\def \id {\mathsf{Id}}
\def \usc {upper semicontinuous}
\def \se {superenvelope}
\def \ens {entropy structure}
\def \qt {quasitiling}
\def \CT {\mathcal{T}}
\def \CS {\mathcal{S}}
\begin{document}



\title{The symbolic extension theory in topological dynamics}

\author{Tomasz Downarowicz and Guohua Zhang}

\address{\vskip 2pt \hskip -12pt Tomasz Downarowicz}

\address{\hskip -12pt Faculty of Pure and Applied Mathematics, Wroclaw University of Science and Technology, Wybrze\.ze Wyspia\'nskiego 27, 50-370 Wroc\l aw, Poland}

\email{downar@pwr.edu.pl}

\address{\vskip 2pt \hskip -12pt Guohua Zhang}

\address{\hskip -12pt School of Mathematical Sciences and Shanghai Center for Mathematical Sciences, Fudan University, Shanghai 200433, China}

\email{chiaths.zhang@gmail.com}

\subjclass[2010]{37B10, 37B05, 37C85, 43A07}

\keywords{symbolic extension, (symbolic) extension entropy function, entropy structure, superenvelope, principal extension, asymptotic $h$-expansiveness, amenable group,
F\o lner sequence, tiling system, quasi-symbolic extension, residually finite group, comparison property, subexponential group}

\parindent=10pt
\begin{abstract}
In this survey we will present the symbolic extension theory in topological dynamics, which was built over the past twenty years.
\end{abstract}

\maketitle

\setcounter{tocdepth}{2}

\tableofcontents


\section{History of symbolic dynamics and symbolic extensions}\label{sI}

The classical definition of a dynamical system encompassing the dynamical
systems of Newtonian mechanics is that of a motion whose parameters
evolve as functions of the time in accordance with a
system of differential equations. One simplification in its study is to discretize time, so
that the state of the system is observed only at discrete ticks of a clock,
which leads to the study of the iterates of a single
transformation. Symbolic dynamics arose as an attempt to study systems by means
of discretizing space as well as time.
The origins of symbolic dynamics come from representing geodesics on surfaces of constant negative curvature by symbolic sequences. It
 can be traced to the work of Hadamard \cite{Hadamard} in 1898, where he constructed surfaces in $\mathbb{R}^3$ of negative curvature, and discovered that
geodesics on these surfaces can be described by sequences of symbols by recording the successive sides of a given fundamental region cut by the
geodesic. Geodesics can also be represented by
using continued fraction expansions of the end
points of the geodesic at infinity and the so-called Gauss
reduction theory, which may even go back to works of Gauss \cite{Gauss} and Dirichlet \cite{Dirichlet}. Since then this idea was developed by Morse, Artin, Koebe,
Nielsen and Hedlund in the 1920s and 30s, and became an important tool in the study of dynamical systems, of which geodesic flows on Riemannian manifolds of negative sectional
curvature represent a major class of examples. See \cite{AF, KU} and the references therein for more historical information.

In \cite{MH38} Morse
and Hedlund named the subject of symbolic dynamics.
The basic idea is to divide up the set
of possible states into a finite number of pieces, and keep track of which
piece the state of the system lies in at every tick of the clock. Each piece
is associated with a symbol and in this way the evolution of the system
is described by an infinite sequence of symbols. This leads to a symbolic
dynamical system corresponding to the original system. In the 1960s and 70s, many symbolic systems were constructed for general hyperbolic systems using Markov partitions developed by Sinai \cite{Sinai68}, Ratner \cite{R69}, Adler and Weiss \cite{AW70} and Bowen \cite{Bow70, Bow73}.
For an introduction to symbolic dynamics,
its basic ideas and techniques see the books \cite{LM95} by Lind and Marcus and \cite{K98} by Kitchens.

\medskip

In some sense, topology provides a static picture of an object, while a topological dynamical system corresponds to a motion picture or more precisely a high definition video of the evolution of the object. In this analogy, a symbolic system can be compared to a video with a crude, low definition. It turns out that a suitable ``low definition video'' is capable of providing the complete information about the evolution of the observed system.

\emph{One of the most important tasks in the theory of symbolic dynamics may be giving criteria for a lossless digitalization of a general abstract dynamical system.}

\medskip

  In the classical ergodic theory of $\Z$-actions (i.e.\ actions by iterates of a single transformation), Krieger's Generator Theorem \cite{Kr} resolves the problem completely using the Kolmogorov--Sinai entropy: any invertible measurable system (on a standard probability space) with finite Kolmogorov--Sinai entropy $h$  admits a finite generating partition consisting of $l=\lfloor2^h\rfloor+1$ atoms and hence is isomorphic to a subshift over $l$ symbols equipped with some shift-\inv\ measure.
The result was later generalized by \v Sujan \cite{Su} to free actions of countable discrete infinite amenable groups (with later improvements by Rosenthal \cite{R} and Danilenko and Park \cite{DP}), and even obtained recently for actions of general countable groups by Seward \cite{Sew}.

In topological dynamics of $\mathbb{Z}$-actions (i.e. actions by iterates of a single self-homeomorphism of a compact metric space), it is natural to ask an analogous question: is it possible, and if yes then how many symbols are needed, to losslessly encode a topological synamical system (with finite topological entropy) in a subshift? Obviously, in general, it is impossible to represent a system by a \tl ly conjugate subshift even if the system has finite topological entropy. A classical Hedlund's result \cite{He} states that a topological $\mathbb{Z}$-system is
topologically conjugate to a subshift if and only if it is expansive and the state space is
zero dimensional. However, a wide range of systems are \emph{semi-conjugate} to subshifts, in the sense that they are topological factors of symbolic systems, in other words, they admit symbolic extensions. Clearly, a symbolic extension carries the complete information about the underlying topological dynamical systems and hence can be considered its lossless digitalization. In this manner, we are led to studying symbolic extensions, finding criteria for their existence and minimizing their topological entropy.

 \medskip

A natural generalization of $\Z$-actions are actions (both measure-theoretic and topological) of countable groups. \emph{Throughout this paper we let $G$ be a discrete countably infinite group with the unity $e$.}

 By a \emph{\tl\ action} (or just an \emph{action}) of $G$ on a compact metric space $X$, we will mean a homomorphism from $G$ into $\Hom(X)$, where $\Hom(X)$ denotes the group of all homeomorphisms $\phi:X\to X$, that is an assignment $g\mapsto \phi_g$ such that $\phi_{gg'} = \phi_g\circ\phi_{g'}$ for every $g,g'\in G$.
It follows automatically that $\phi_{e}=\id$ (the identity homeomorphism) and that $\phi_{g^{-1}}=(\phi_g)^{-1}$ for every $g\in G$. Such an action will be denoted by \xg.\footnote{Although $G$ may act on $X$ in many different ways, we will usually fix just one such action, hence the notation $(X, G)$ should not lead to a confusion. Sometimes we also denote by $(X, T)$ an action of $\mathbb{Z}$ on a compact metric space $X$, where $\phi_1 = T: X\rightarrow X$ is a homeomorphism of the space $X$, and then the group $\mathbb{Z}$ in fact acts as the family
 $\{T^n: n\in \mathbb{Z}\}$ of homeomorphisms.} Another term used for $(X,G)$ is a \emph{\tl\ dynamical system} (or briefly a \emph{system}). From now on, to reduce the multitude of symbols used in this survey, we will write $g(x)$ in place of $\phi_g(x)$. The same applies to subsets $\mathsf A\subset X$: $g(\mathsf A)$ will replace $\phi_g(\mathsf A)$.
 A Borel set $\mathsf A\subset X$ is called \emph{\inv} if $g(\mathsf A)=\mathsf A$ for every $g\in G$.

An important example of a $G$-action is the \emph{shift action on finitely many symbols}. Let $\Lambda$ be a finite set\footnote{Usually we assume that $\Lambda$ contains more than one element, otherwise the associated symbolic system is trivial.} considered as a discrete topological space (in this context $\Lambda$ is called the \emph{alphabet}), and let
$\Lambda^G=\{x=(x_g)_{g\in G}:\ \forall_{g\in G}\ x_g\in\Lambda\}$
be equipped with any metric compatible with the product topology. Then $\Lambda^G$ is a compact metric space and $G$ acts on it naturally by shifts:
$$
\text{if \ }x=(x_f)_{f\in G}\text{ \ and \ }g\in G\text{ \ then \ }g(x)=(x_{fg})_{f\in G}.
$$
The system $(\Lambda^G, G)$ is called the \emph{full shift} (over $\Lambda$) while any nonempty closed invariant subset $Y\subset\Lambda^G$ (regarded with the shift action) is called a \emph{subshift} or \emph{symbolic system}.

If $(X,G)$ and $(Y,G)$ are actions of the same group on two (not necessarily different) spaces, and there exists a continuous surjection $\pi:Y\to X$ compatible with the action (i.e. such that $\pi\circ g(y)=g\circ\pi(y)$ for any $g\in G$ and $y\in Y$), then $(X,G)$ is called a \emph{\tl\ factor of $(Y,G)$} and $(Y,G)$ is called a \emph{\tl\ extension of $(X,G)$}, and the above continuous surjection $\pi$ will be referred to as a \emph{topological factor map} or \emph{topological extension}.
In what follows, we will skip the adjective ``\tl", and when the acting group is fixed and its action on given spaces is understood, we will also skip it in the denotation of the \ds s (i.e.\  we will use the letters $X$ and $Y$ in the meaning of $(X,G)$ and $(Y,G)$).
It has to be remarked that one system, say $X$, may be a factor of another, say $Y$, via many different factor maps. Since this may lead to a confusion, we will often use the phrase \emph{$X$ is a factor of $Y$ via the map $\pi$} and \emph{$Y$ is an extension of $X$ via the map $\pi$}. If $\pi$ is additionally injective, in which case it is a homeomorphism between $Y$ and $X$, then we call them \emph{\tl ly conjugate}.
 If $Y$ is a symbolic system and $X$ is a factor of $Y$ via the map $\pi$, then we say that $Y$ is \emph{a symbolic extension of $X$} and that $\pi$ is \emph{a symbolic extension of $X$} (or just \emph{a symbolic extension}).

\medskip

 Thus, from the point of view of \tl\ dynamics, the following question arises naturally.

 \begin{question} \label{202002082222}
 When does a system $X$ admit a symbolic extension?
 \end{question}

\begin{remark} \label{202002111720}
If the group $G$ is amenable, in order for a system $X$ to admit a symbolic extension, it must necessarily have finite topological entropy. For actions of amenable groups, it is a basic fact that the topological entropy of a system is less than or equal to that of its extension, and that any symbolic system has finite topological entropy.
\end{remark}

 The first result concerning symbolic extensions in topological dynamics is
due to Reddy \cite{Reddy68}. It says that every expansive
$\mathbb{Z}$-action\footnote{Recall that a $\mathbb{Z}$-action $(X, T)$ is \emph{expansive} if there exists $\delta> 0$ such that once $x_1$ and $x_2$ are different points in $X$ then $d (T^n x_1, T^n x_2)> \delta$ for some $n\in \mathbb{Z}$.} admits a symbolic extension, which is proved via the language of topological generators.
   Note that expansiveness and topological generators can be introduced naturally for actions of any discrete countable group $G$, and the same proof in \cite{Reddy68} shows that every expansive $G$-action admits a symbolic extension.
Obviously expansiveness is a very strong requirement in topological dynamics, which is not satisfied even by some ``simple" systems (such as a circle rotation). Moreover, while the property of admitting a symbolic extension is obviously inherited by factors, expansiveness is not.

In the 1970s all known examples of $\mathbb{Z}$-actions with finite topological entropy seemed to admit symbolic extensions.
One of the spectacular applications of symbolic extensions occurs in the studies
of hyperbolic systems. Using Markov partitions, such systems can be lifted to
subshifts of finite type, which allows to apply symbolic dynamical methods to
study hyperbolic systems. For more information see Bowen's classic book \cite{Bo77}. However, very little was known for general systems even when we restrict to $\mathbb{Z}$-actions.

The question whether all $\mathbb{Z}$-actions with finite topological entropy indeed admit symbolic
extensions has been presumably puzzling many people between the years 1970 and 1990.
 Around 1989, Joe Auslander addressed this question to Mike Boyle,
and then Boyle came up
with the negative answer, by constructing an appropriate zero-dimensional example \cite{Boy91}. This example was presented at the Adler conference in 1991, but remained unpublished until the year 2002, when it was included in the survey \cite{BFF}.

Since the seminal works by Moulin Ollagnier \cite{MO85} and Ornstein and Weiss \cite{OW} (after Kolmogorov--Sinai measure-theoretic entropy \cite{Ko59, Sinai59} for measurable $\mathbb{Z}$-actions and Adler-Konheim-McAndrew topological entropy \cite{AKM65} for topological $\mathbb{Z}$-actions), the entropy theory of amenable group actions was developed by many researchers in \cite{Da01, DooZ12, DooZ15, GTW, HYZ, KL07, KL09, MP79, MP82, RW, ST80, WZ}, see also the survey \cite{Weiss03} by Benjy Weiss.

\emph{From now on additionally we assume $G$ to be amenable} (see \S \ref{amenable} for the definition of amenability).
Inspired by results in classical ergodic theory of $\Z$-actions, one may ask:

\begin{question} \label{202002081406}
Assume that the action $(X, G)$ admits symbolic extensions. What is the infimum of topological entropies of all the symbolic
extensions of \xg?
\end{question}

By analogy to the measure-theoretic case, a naive guess would be that this infimum simply equals the topological entropy of the action $(X, G)$. In fact this does happen for $\mathbb{Z}$-actions with zero topological entropy. Boyle proved in \cite{Boy91} that any zero entropy $\mathbb{Z}$-action admits a symbolic extension with zero topological entropy. However, another example provided by Boyle (a simpified version of the above mentioned example of \zd\ $\mathbb{Z}$-actions which admit no symbolic extensions) shows that, even if a $\mathbb{Z}$-action does admit a symbolic extension,
there may exist an essential gap between the topological entropy of the system and that of any of its symbolic extensions. This gap was called by Boyle \emph{residual entropy}.

In order to formalize Question \ref{202002081406}, one introduces the following definition.
Here we denote by $\mathbf{h_\text{top}} (X, G)$ (or just $\mathbf{h_\text{top}} (X)$ if no confusion occurs) the topological entropy of an action $(X, G)$.

\begin{definition} \label{202002092121}
The \emph{topological symbolic extension entropy} of an action $(X, G)$ is defined as
$$\mathbf{h_\text{sex}} (X, G)\footnote{It is also denoted by $\mathbf{h_\text{sex}} (X, T)$ in the setting of a $\mathbb{Z}$-action $(X, T)$.}=
\begin{cases}
\inf\ \{\mathbf{h_\text{top}} (Y): Y\ \text{is a symbolic extension of}\ X\},\\
\infty,\ \ \ \ \ \ \ \ \ \ \text{if $X$ admits no symbolic extensions}.
\end{cases}$$
\end{definition}

As shown in \cite{BFF}, it does not seem feasible to answer completely Questions \ref{202002082222} and \ref{202002081406} and compute the topological symbolic extension entropy of an action, only by means of topological and symbolic methods, though the invariant $\mathbf{h_\text{sex}}$ is of purely topological-dynamical nature. In fact, the problem has been solved using methods of ergodic theory, and up to date there seems to be no way around it.

Since we have assumed $G$ to be amenable, any topological action $(X, G)$ admits invariant Borel probability measures (or briefly invariant measures)\footnote{Recall that for the action $(X, G)$ we say a Borel probability measure $\mu$ supported on $X$ \emph{invariant} if $g \mu= \mu$ for each $g\in G$, where $(g \mu) (A)\doteq \mu (g^{- 1} A)$ for all Borel subsets $A\subset X$.} supported on $X$. Denote by $\mathcal{M} (X, G)$\footnote{In the case of a $\mathbb{Z}$-action $(X, T)$, we also denote it by $\mathcal{M} (X, T)$.} the set of all invariant measures supported by $(X, G)$, which is equipped with the weak-star topology.
Then $\mathcal{M} (X, G)$ is always a nonempty compact metric convex space, whose set of extreme points coincides with the set of ergodic measures.\footnote{Recall that $\mu\in \mathcal{M} (X, G)$ is \emph{ergodic} if $\mu (A)$ equals either $0$ or $1$ whenever a Borel set $A\subset X$ is invariant.}

Ergodic theory and topological dynamics have exhibited a remarkable parallelism. The global variational principle concerning entropy (for $\mathbb{Z}$-actions see \cite{Good71, Good69, M75} and for amenable group actions see \cite{MP82, ST80}) provides a useful bridge between entropy theories of topological dynamical systems and measure-theoretic ones. Thus it does not seem very surprising that, in order to compute the topological symbolic extension entropy of a topological action \xg, one needs to focus not only on the topological entropy of the system and its symbolic extensions, but also on the Kolmogorov--Sinai entropies of all invariant measures supported by these systems.

\medskip

This is indeed the case, as shown in \cite{D01, D0} and finally in \cite{BD}, where the Questions~\ref{202002082222} and~\ref{202002081406} and the problem of computing the invariant $\mathbf{h_\text{sex}}$ are completely solved in the generality of $\mathbb{Z}$-actions. The solution is contained in the so-called \emph{Symbolic Extension Entropy Theorem} which provides a criterion for a topological $\Z$-action to admit a symbolic extension and allows to predict the Kolmogorov--Sinai entropies of all invariant measures supported by any possible symbolic extension (for details see \cite[Theorem 5.5]{BD} or \cite[Theorem 9.2.1]{D01}, for further developments see also \cite{BD06, BuD}). In particular, combined with a result from \cite{Buzzi97}, one obtains the striking result that every $C^\infty$ map on a compact $C^\infty$ manifold has a symbolic extension with the same \tl\ entropy (and additional good properties). A natural next step is to consider $C^r$ maps with $1\le r< \infty$. The general theory for this situation was systematically explored in \cite{DN}: roughly speaking a typical $C^1$ system (in some class) on a Riemannian manifold of dimension $d\ge 2$ admits no symbolic extensions
at all. This problem was further discussed in \cite{Bu102, Bu101, DF}. We remark that \cite{DN}  leaves the problem open whether there exists symbolic extensions for all $C^r$ maps on Riemannian manifolds with $1< r< \infty$. The authors formulate a conjecture which postulates the positive answer with a possible upper bound for the topological symbolic extension entropy. The conjecture has been proved in the case of interval or the circle maps in \cite{DM}, and for a $C^2$ surface diffeomorphism in \cite{Bu}, and then for a $C^r$ diffeomorphism with $r > 1$ on a compact three-dimensional manifold
in \cite{BL}. In all of these works a crucial role is played by the notion of \emph{entropy structure}, developed and discussed systematically in \cite{D0}.
All the above developments lead to the creation of a relatively new \emph{theory of symbolic extensions} (of $\mathbb{Z}$-actions) in topological dynamics. For its history see also the survey \cite{D2010}, and for a detailed explanation of the theory and proofs of most of the abstract results (excluding smooth dynamics) see the book \cite{D1}.

As most of the entropy theory, including not only topological and Kolmogorov--Sinai entropy but also Shannon--McMillan--Breiman Theorem, Krieger's Generator  Theorem, and so on, can be very well extended from $\mathbb{Z}$-actions to actions of discrete countable amenable groups, it seems reasonable to believe that the realm of countable amenable group actions is the most natural environment to carry over the theory of symbolic extensions as well.

At the beginning of the year 2011 we (the authors of this survey) were  very optimistic and hoped to be able to generalize the theory of symbolic extensions to actions of general amenable groups without serious difficulties. In fact, the first step in this direction has already been made for $\mathbb{Z}^k$-actions by Gutman \cite{Gu1} in 2011.  But it soon turned out that, except for some steps of the generalization which are indeed not very difficult, the case of general amenable group actions is much more complicated than expected. The difficulties arise already at the level of zero entropy systems and depend on some unsolved  problems associated to intrinsic properties of the groups in question. These problems are discussed by the authors and their collaborator Dawid Huczek, in the papers \cite{DH, DHZ, H} written over the past ten years, where the background is laid for the final solution. The main question is finally resolved recently in \cite{DZ}. However, in order to obtain results fully analogous to those known for $\Z$-actions we have to make compromises. In \cite{DZ}, for actions of general countable amenable groups we prove a slightly ``deficient'' version of the Symbolic Extension Entropy Theorem, in which symbolic extensions are replaced by the so-called quasi-symbolic extensions, defined as extensions in form of topological joinings of subshifts with some zero-dimensional zero entropy tiling systems. These zero-dimensional zero entropy tiling systems are completely determined by the group $G$, and in general it remains unknown whether such a system admits a symbolic extension (even though it has entropy zero). This is the reason why we have to join it in an unchanged form with the symbolic extension. If the amenable group $G$ has an additional property called the \emph{comparison property}, or is residually finite, then in \cite{DZ} we are able to prove the uncompromised Symbolic Extension Entropy Theorem fully analogous to the one known for $\Z$-actions.

\medskip

This manuscript consists of two parts, where
we introduce the symbolic extension theory for $\mathbb{Z}$-actions and for general amenable group actions, respectively.

Part 1 is structured as follows: In \S \ref{general-stru} we introduce some basic functional analytic properties which will be used to characterize symbolic extension entropy. In \S \ref{stru} we discuss the \ens\ introduced in detail in \cite{D0}. Roughly speaking, entropy structure reflects the emergence of Kolmogorov--Sinai entropies of \im s of the considered systems at refining scales.
In \S \ref{four} we state the main results of the symbolic extension theory for $\mathbb{Z}$-actions. This includes
the Symbolic Extension Entropy Theorem (which utilizes \ens\ to characterize the symbolic extension entropy function), the Symbolic Extension Entropy Variational Principle (interpreting topological symbolic extension entropy via symbolic extension entropy function), and the characterization of asymptotically $h$-expansive $\mathbb{Z}$-actions by means of symbolic extensions.

The structure of Part 2 is as follows: In \S \ref{amenable} we make some necessary preparations for discussing countable amenable group actions. In \S \ref{prepa} we introduce the quasitiling and tiling systems of a given abstract amenable group, along with the so-called tiled entropy of \zd\ actions. These tools are crucial in our Quasi-Symbolic Extension Entropy Theorem. In \S \ref{theory} we present our main results about symbolic extensions for amenable group actions, mainly consisting of Theorem \ref{quasi}, the mentioned Quasi-Symbolic Extension Entropy Theorem. In \S \ref{im-theory} we state the uncompromised Symbolic Extension Entropy Theorem for two special classes of amenable groups, that is, amenable groups which are residually
finite or which have the comparison property. Then in the final section \S \ref{ques} we list some open questions for further study of the theory.

 \part{The theory of symbolic extensions for $\mathbb{Z}$-actions} \label{1part}

\emph{Throughout this part we will focus only on $\mathbb{Z}$-actions}.

In this part we will focus on abstract results valid for general $\Z$-actions (by homeomorphisms) on compact metric spaces.  We remark that even though there are plenty of interesting and valuable results concerning symbolic extensions $C^r$ maps (with $1\le r< \infty$) on smooth Riemannian manifolds, we do not include them in this survey.

 \medskip

Though this part relies heavily on the (Adler--Konheim--McAndrew) topological entropy and Kolmogorov--Sinai entropy for $\mathbb{Z}$-actions,
we assume that the reader is familiar with these notions and we skip their definitions. The interested reader can find them in any textbook about entropy of dynamical systems, see for example recent books \cite{D1, Gla}.

\section{General theory of a structure and its superenvelope} \label{general-stru}

Let $\M$ be a compact metric space. We say that the function $f: \mathcal{M}\rightarrow \mathbb{R}$ is \emph{upper semicontinuous} (or briefly \emph{u.s.c.}) if $\limsup_{\mu'\rightarrow \mu} f (\mu')\le f (\mu)$ for every $\mu\in \M$, equivalently, if $\{\mu\in \M: f (\mu)\ge r\}$ for every $r\in \mathbb{R}$ is a closed subset; and that $f$ is \emph{lower semicontinuous} (or briefly \emph{l.s.c.}) if $- f$ is u.s.c.

The following fact (cited from \cite{BD, D1}) is very useful in later analysis.

\begin{proposition} \label{exchanging}
Let $(f_k)_{k\in \N}$ be a decreasing sequence of u.s.c.\ functions on $\mathcal{M}$. Then
$$\inf_{k\in \N} \sup_{x\in \mathcal{M}} f_k (x)= \sup_{x\in \mathcal{M}} \inf_{k\in \N} f_k (x).$$
\end{proposition}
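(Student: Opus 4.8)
The plan is to prove the two inequalities separately, the nontrivial content lying entirely in the ``$\le$'' direction. First I would record the elementary inequality
$$\sup_{x\in\mathcal M}\ \inf_{k\in\N} f_k(x)\ \le\ \inf_{k\in\N}\ \sup_{x\in\mathcal M} f_k(x),$$
which requires no hypotheses at all: for any fixed $x_0\in\mathcal M$ and any index $k$ one has $\inf_{j} f_j(x_0)\le f_k(x_0)\le \sup_{y\in\mathcal M} f_k(y)$, so $\inf_j f_j(x_0)\le \inf_k\sup_y f_k(y)$, and taking the supremum over $x_0$ gives the claim. Thus everything reduces to showing the reverse inequality $\inf_k\sup_x f_k(x)\le \sup_x\inf_k f_k(x)$.

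To this end, write $s_k=\sup_{x\in\mathcal M} f_k(x)$ and $a=\inf_k s_k$. Since the sequence $(f_k)$ is decreasing, the reals $s_k$ are decreasing and $a=\lim_k s_k$. Each $f_k$ is u.s.c.\ on the compact space $\mathcal M$, hence attains its supremum; I would choose $x_k\in\mathcal M$ with $f_k(x_k)=s_k$, and then, by sequential compactness of $\mathcal M$, pass to a subsequence $x_{k_j}\to x_*$ for some $x_*\in\mathcal M$.

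The heart of the argument is to show that $f_m(x_*)\ge a$ for every fixed $m$. For all $j$ large enough that $k_j\ge m$, monotonicity gives $f_m(x_{k_j})\ge f_{k_j}(x_{k_j})=s_{k_j}\ge a$, so the sequence $\bigl(f_m(x_{k_j})\bigr)_j$ is eventually bounded below by $a$, whence $\limsup_j f_m(x_{k_j})\ge a$. Now upper semicontinuity of $f_m$ at $x_*$ yields $f_m(x_*)\ge \limsup_j f_m(x_{k_j})\ge a$. Since $m$ was arbitrary, $\inf_m f_m(x_*)\ge a$, and therefore $\sup_{x}\inf_k f_k(x)\ge \inf_m f_m(x_*)\ge a=\inf_k\sup_x f_k(x)$, which completes the reverse inequality and hence the proof.

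I expect the main (and really the only) subtle point to be the interplay between the direction of upper semicontinuity and the direction of the bound we need. Upper semicontinuity provides only the upper estimate $f_m(x_*)\ge\limsup_j f_m(x_{k_j})$ on the limiting behaviour, and it is precisely this inequality that lets us transfer the lower bound $f_m(x_{k_j})\ge a$ to the limit point; an analogous statement for \emph{lower} semicontinuous functions, or for an \emph{increasing} sequence, would fail, so monotonicity and u.s.c.\ must be used together and in the correct sense. The remaining ingredients---attainment of suprema by u.s.c.\ functions on compacta and sequential compactness of the metric space $\mathcal M$---are entirely standard.
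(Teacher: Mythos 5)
Your proof is correct, and it is essentially the standard argument for this fact (the paper itself cites it from \cite{BD, D1} without proof): the easy inequality plus compactness and upper semicontinuity to produce a point $x_*$ where all the $f_k$ stay above $a=\inf_k\sup_x f_k$. The only cosmetic variant worth mentioning is that one can avoid extracting subsequences by noting that the sets $K_k=\{x: f_k(x)\ge a\}$ are nonempty (suprema of u.s.c.\ functions on a compact set are attained), closed, and nested, so their intersection is nonempty by compactness; this is the same idea in finite-intersection-property form.
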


\begin{definition} By a \emph{structure} on $\M$ we will mean any nondecreasing \sq\ of commonly bounded nonnegative functions on $\M$, $\F =(f_k)_{k\ge 0}$ with $f_0\equiv 0$.
\end{definition}

Clearly, the pointwise \emph{limit function} $f=\lim_k f_k$ exists and is nonnegative and bounded.
\begin{definition}
Two structures $\F =(f_k)_{k\ge 0}$ and $\F' = (f'_k)_{k\ge 0}$ are said to be \emph{uniformly equivalent} if
$$
\forall_{\varepsilon>0,\,k_0\ge 0}\ \exists_{k\ge 0}\ \ (f'_k>f_{k_0}-\varepsilon \text{ and } f_k>f'_{k_0}-\varepsilon).
$$
\end{definition}
It is immediate that uniform equivalence is an equivalence relation between structures. It is also obvious that uniformly equivalent structures have a common limit function.

Now let $f: \mathcal{M}\rightarrow \mathbb{R}_{\ge 0}$ be a bounded function on $\M$. By the \emph{\usc\ envelope} (or briefly \emph{u.s.c.\ envelope}) of $f$ we shall mean the function $\tilde f$ defined on $\M$
$$
\tilde f(\mu) = \limsup_{\mu'\to\mu}f(\mu') = \inf_{U\ni\mu}\sup\{f(\mu'):\mu'\in U\} = \inf\{g \text{ continuous and } g\ge f\},
$$
where $\mu,\mu'\in\M$ and $U$ ranges over all open neighborhoods of $\mu$ (all of the above formulas are equivalent). Note that $\tilde f\ge f$. We also define the \emph{defect of $f$} as the difference $\overset{...}f = \tilde f-f$. Clearly $\tilde f$ is always u.s.c., and then $f$ is u.s.c.\ if and only if $f=\tilde f$ or equivalently $\overset{...}f \equiv 0$.

\begin{definition}
 Let $\F=(f_k)_{k\ge 0}$ be a structure defined on a compact metric space $\M$.
 \begin{enumerate}

 \item A nonnegative function $E$ defined on $\M$ is called a \emph{superenvelope} of $\F$, if $E\ge f_k$ for each $k\ge 0$ and the defects $\overset{..............}{E-f_k}$ tend pointwise to zero.

\item We say that $\F$ has \emph{\usc\ differences} (or briefly \emph{u.s.c.\ differences}), if the difference functions $f_{k+1}-f_k$ are all u.s.c.\ for every $k\ge 0$.

\item Let $\M$ be (in addition to being compact metric) convex with a convex metric. We say that a structure $\F$ is \emph{affine} if all $f_k$ are affine functions on $\M$.
 \end{enumerate}
  \end{definition}

 \begin{remark}
A priori a structure may have no \se s as defined above, and so by default, the constant infinity function is added to the collection of  \se s of any structure.
 \end{remark}

Let $E$ be a finite superenvelope (i.e. not the constant infinity function) of a structure $\F=(f_k)_{k\ge 0}$ defined on a compact metric space $\M$ and let $f$ be the limit function of $\F$. Then $E- f$ equals the nonincreasing limit function of $(\widetilde{E-f_k})_{k\ge 0}$, in particular, it is u.s.c., hence bounded. Thus $E$ is also bounded.

\medskip

Here are some basic facts about superenvelopes of a structure (for details see for example \cite[Lemmas 8.1.10 and 8.1.12, Theorems 8.1.25 (2) and 8.2.5]{D1}):

\begin{proposition}\label{fact2}
The following statements hold:
\begin{enumerate}

  \item The infimum of all \se s of a structure $\F$ is still a \se\ of $\F$ (in the
  extreme case it is the constant infinity function). This \emph{minimal \se} of
  $\F$ will be denoted by $\EF$.

	\item Uniformly equivalent structures have the same collection of \se s (and hence have the same minimal \se).

	\item If a structure $\F=(f_k)_{k\ge 0}$ has u.s.c.\ differences then a (finite) function $E$ is its \se\ if and only if $E-f_k$ is nonnegative and u.s.c.\ for every $k\ge 0$ (in particular $E = E-f_0$ is then u.s.c.).

	\item If $\F$ is an affine structure with u.s.c.\ differences, defined on a metrizable Choquet simplex\footnote{By a
\emph{metrizable Choquet simplex} we mean a nonempty compact
convex set which possesses a convex metric, and every its element has
a unique representation as the integral average of its extreme points.
We remark that, given any $G$-action $(X, G)$ with $G$ additionally amenable, the space $\mathcal{M} (X, G)$, endowed with the weak-star topology, is a metrizable Choquet simplex.}, then $\EF$ coincides with the pointwise infimum of all affine \se s of $\F$ (in particular the function $\EF$ is concave).
\end{enumerate}
\end{proposition}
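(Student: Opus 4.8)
\emph{Notation and two elementary observations.} The plan is to deduce all four items from two facts about the \usc\ envelope. First, it is monotone: $g\le h$ implies $\widetilde g\le\widetilde h$. Second, for any bounded $A$ and any u.s.c.\ $d$ one has $\widetilde{A+d}\le\widetilde A+d$ (take $\limsup$ of the sum and use $\widetilde d=d$). Throughout I write $\mathrm{def}(g)=\widetilde g-g\ge0$ for the defect, so that, by definition, $E$ is a \se\ of $\F=(f_k)$ precisely when $E\ge f_k$ for all $k$ and $\mathrm{def}(E-f_k)\to0$ pointwise; recall also that for a finite \se\ the functions $\widetilde{E-f_k}$ are nonincreasing in $k$ and decrease to $E-f$.

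\emph{Parts (1) and (2).} For (1) I would let $\{E_\alpha\}$ be the family of finite \se s (if it is empty the infimum is the constant $\infty$, a \se\ by convention) and set $E=\inf_\alpha E_\alpha$. Then $E\ge f_k$ for all $k$, and since $E\le E_\alpha$ monotonicity gives $\widetilde{E-f_k}\le\inf_\alpha\widetilde{E_\alpha-f_k}$. The right-hand side is nonincreasing in $k$, so
\[
\lim_{k}\Big(\inf_\alpha \widetilde{E_\alpha-f_k}\Big)=\inf_\alpha\inf_k \widetilde{E_\alpha-f_k}=\inf_\alpha\big(E_\alpha-f\big)=E-f .
\]
As $E-f_k\to E-f$ as well, $\mathrm{def}(E-f_k)$ is squeezed to $0$, so $E=\EF$ is a \se. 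For (2), if $\F,\F'$ are uniformly equivalent (hence with common limit $f$) and $E$ is a \se\ of $\F$, then given $k,\eps$ I pick $j$ with $f_j>f'_k-\eps$, so $E\ge f_j>f'_k-\eps$ and thus $E\ge f'_k$. For the defect I fix $\mu,\eps$, choose $k_0$ with $\widetilde{E-f_{k_0}}(\mu)\le(E-f)(\mu)+\eps$, then $k$ with $f'_k>f_{k_0}-\eps$; monotonicity and an $\eps$-shift give $\widetilde{E-f'_{k'}}(\mu)\le(E-f)(\mu)+2\eps$ for all $k'\ge k$, whence $\mathrm{def}(E-f'_{k'})(\mu)\to0$. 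Thus the two structures share all \se s.

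\emph{Part (3).} Assume u.s.c.\ differences and put $d_k=f_{k+1}-f_k\ge0$ (u.s.c.). If $E$ is a finite \se, the second observation with $A=E-f_{k+1}$, $d=d_k$ yields $\widetilde{E-f_k}\le\widetilde{E-f_{k+1}}+d_k$; subtracting $E-f_k=(E-f_{k+1})+d_k$ gives $\mathrm{def}(E-f_k)\le\mathrm{def}(E-f_{k+1})$. So the defects are nondecreasing in $k$, nonnegative, and tend to $0$; hence they vanish identically, i.e.\ every $E-f_k$ is nonnegative and u.s.c. Conversely, if each $E-f_k$ is nonnegative and u.s.c.\ then all defects are $0$ and $E$ is trivially a \se; taking $k=0$ shows $E=E-f_0$ is u.s.c.

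\emph{Part (4), and the main obstacle.} Let $A$ be the pointwise infimum of all affine \se s. Each is a \se, so $A\ge\EF$; and by part (3) each affine \se\ $E'$ has $E'-f_k$ nonnegative u.s.c., whence $A-f_k=\inf_{E'}(E'-f_k)$ is nonnegative u.s.c.\ and (again by (3)) $A$ is a \se. The entire content is thus the reverse inequality $A\le\EF$, i.e.: for every $\mu_0$ and $\eps>0$ there is an affine \se\ $E'$ with $E'(\mu_0)<\EF(\mu_0)+\eps$. Granting this, $A=\EF$, and as an infimum of affine functions $A$ is concave, so $\EF$ is concave for free. To build such $E'$ I would use the simplex structure: each $\mu$ has a unique representing measure $p_\mu$ on $\ex\M$ with barycenter $\mu$, and $\mu\mapsto p_\mu$ is affine; since the $f_k$ are affine, $f_k(\mu)=\int f_k\,dp_\mu$, so any $\mu\mapsto\int\phi\,dp_\mu$ with $\phi\ge f_k$ on $\ex\M$ automatically dominates $f_k$ everywhere, and $\phi=\EF|_{\ex\M}$ gives an affine function dominating all $f_k$ and agreeing with $\EF$ on extreme points. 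The hard part is upper semicontinuity: because $\mu\mapsto p_\mu$ is only Borel, this naive affinization need not be u.s.c., hence need not be a \se\ by (3). Overcoming this is the simplex-specific heart of the argument (the content of the cited \cite[Theorem 8.2.5]{D1}): one approximates the concave u.s.c.\ function $\EF$ from above by affine u.s.c.\ functions, repairing each so that its differences with the u.s.c.\ $f_k$ stay u.s.c.\ while its value at $\mu_0$ remains within $\eps$ of $\EF(\mu_0)$. I expect the delicate point to be preserving upper semicontinuity of these differences under the affine approximation, which is exactly where uniqueness of the barycentric representation on the Choquet simplex is indispensable.
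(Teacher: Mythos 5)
Your arguments for items (1)--(3) are correct and complete, and they are essentially the standard ones from the reference the paper cites for this proposition (\cite[Lemmas 8.1.10, 8.1.12]{D1}); the paper itself offers no proof beyond that citation. In particular, the squeeze in (1) via $\widetilde{E-f_k}\le\inf_\alpha\widetilde{E_\alpha-f_k}\downarrow E-f$ is exactly right, the $2\eps$-shift in (2) is sound, and the observation in (3) that u.s.c.\ differences force the defects $\overset{...............}{E-f_k}$ to be \emph{nondecreasing} in $k$ (hence identically zero once they tend to zero) is the key trick and you execute it correctly.

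Item (4), however, is left with a genuine gap, and you say so yourself. What you prove is only the trivial inequality: every affine \se\ dominates $\EF$ by minimality, so the infimum $A$ of affine \se s satisfies $A\ge\EF$ (your additional observation that $A$ is itself a \se, via part (3) and the fact that an infimum of u.s.c.\ functions is u.s.c., is correct but not needed for this inequality). The entire content of (4) is the reverse inequality $A\le\EF$, i.e.\ that for every $\mu_0$ and $\eps>0$ there exists an \emph{affine} \se\ $E'$ with $E'(\mu_0)<\EF(\mu_0)+\eps$, and this you do not establish. Your proposed route through the barycentric map $\mu\mapsto p_\mu$ founders exactly where you say it does: the affinization of $\EF$ over $\ex\M$ need not have u.s.c.\ differences with the $f_k$, so by your own criterion (3) it need not be a \se. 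Fixing this requires the simplex-specific machinery of \cite[Theorem 8.2.5]{D1} (an Edwards-type separation argument producing continuous affine functions squeezed above the convex u.s.c.\ data and below concave l.s.c.\ majorants, iterated so that the differences $E'-f_k$ stay u.s.c.\ while $E'(\mu_0)$ stays within $\eps$ of $\EF(\mu_0)$); one also needs to know beforehand that $\EF$ is concave and u.s.c., which for an affine structure with u.s.c.\ differences is itself a nontrivial step that your sketch presupposes rather than proves. Identifying the obstacle is not the same as overcoming it, so as written the proposal does not prove item (4).
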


We will also need the following terminology.
 Let $\pi:Y\to X$ be a continuous surjection between compact metric spaces. If $f$ is a bounded nonnegative function on $X$, then we define its \emph{lift of $f$ (against $\pi$)} as the composition $f\circ\pi$, which is a function defined on $Y$, and we will denote it by the same letter $f$ (if it causes no confusion). Obviously the lifted function is \emph{constant on fibers}, that is, it is constant on the sets $\pi^{-1}(x)$ for every $x\in X$. Now if $g$ is a bounded nonnegative function on $Y$, then we define the \emph{push-down of $g$ (along $\pi$)} as the function $g^\pi$ on $X$ given by $g^{\pi}(x) = \sup\{g(y): y\in\pi^{-1}(x)\}$ for every $x\in X$. Thus lifting reverses the operation of pushing down exclusively for functions constant on fibers.

The following facts are either immediate or proved in \cite[Facts A.1.26 and A.2.22]{D1}:

\begin{proposition}\label{pushdown}
The following statements hold:
\begin{enumerate}

\item The operation of lifting preserves u.s.c., l.s.c. (and hence continuity) of a function. If $Y$ and $X$ are both compact metric convex spaces, and $\pi$ is affine, then lifting preserves concavity, convexity (and hence affinity) of a function. The same holds for pushing down functions which are constant on fibers.

\item The pushing down preserves u.s.c.\ of a function. If $Y$ and $X$ are
both compact metric convex spaces, and $\pi$ is affine, then pushing down preserves concavity of a function. If, moreover, $Y$ and $X$ are both Choquet simplices and $\pi$ sends extreme points of $Y$ to extreme
points of $X$, then pushing down also preserves affinity of a function.
\end{enumerate}
\end{proposition}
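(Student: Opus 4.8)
My plan is to prove the two parts separately, and within each to dispose of the semicontinuity and convexity claims before turning to the genuinely delicate affinity statement. For the lifting assertions in (1), everything reduces to continuity of $\pi$: for every $r\in\mathbb{R}$ one has $\{y\in Y:(f\circ\pi)(y)\ge r\}=\pi^{-1}(\{x\in X:f(x)\ge r\})$, which is closed when $f$ is u.s.c.\ because $\pi$ is continuous, and dually for sublevel sets, giving the l.s.c.\ (hence continuous) case. When $\pi$ is affine and, say, $f$ is concave, then
\[
(f\circ\pi)(ty_1+(1-t)y_0)=f\big(t\pi(y_1)+(1-t)\pi(y_0)\big)\ge t\,f(\pi(y_1))+(1-t)\,f(\pi(y_0)),
\]
so $f\circ\pi$ is concave, and convexity (hence affinity) is identical. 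For pushing down a function $g$ \emph{constant on fibers}, I would observe that $g=f\circ\pi$ with $f=g^\pi$; since $\pi$ is a continuous surjection of compact metric spaces it is a closed map, and $\{g\ge r\}$ is closed and saturated, so $\{f\ge r\}=\pi(\{g\ge r\})$ is closed, giving u.s.c.\ (and dually l.s.c.). For concavity of $f$, I choose $y_i\in\pi^{-1}(x_i)$ and use that $ty_1+(1-t)y_0\in\pi^{-1}(tx_1+(1-t)x_0)$ (from $Y$ convex and $\pi$ affine), whence $f(tx_1+(1-t)x_0)=g(ty_1+(1-t)y_0)\ge tf(x_1)+(1-t)f(x_0)$.

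For part (2), the u.s.c.\ claim follows once I note that $g$ attains its supremum on each compact fibre $\pi^{-1}(x)$, so that
\[
\{x\in X:g^\pi(x)\ge r\}=\pi(\{y\in Y:g(y)\ge r\}),
\]
the image of a compact set under the continuous map $\pi$, hence closed. For concavity of $g^\pi$ (with $\pi$ affine and $X,Y$ convex), given $x=tx_1+(1-t)x_0$ and arbitrary $y_i\in\pi^{-1}(x_i)$, the point $ty_1+(1-t)y_0$ lies in $\pi^{-1}(x)$, so $g^\pi(x)\ge t\,g(y_1)+(1-t)\,g(y_0)$; taking suprema over $y_1$ and over $y_0$ independently yields $g^\pi(x)\ge t\,g^\pi(x_1)+(1-t)\,g^\pi(x_0)$.

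The one remaining, and hardest, point is that push-down preserves affinity under the extra simplex hypotheses. Since an affine $g$ is concave, $g^\pi$ is already concave, so it suffices to establish the reverse inequality, and the crux is the following lifting lemma: if $X,Y$ are metrizable Choquet simplices, $\pi$ is continuous affine and maps $\mathrm{ext}(Y)$ into $\mathrm{ext}(X)$, then for $x=tx_1+(1-t)x_0$ every $y\in\pi^{-1}(x)$ decomposes as $y=ty_1+(1-t)y_0$ with $y_i\in\pi^{-1}(x_i)$. Granting this, affinity of $g$ on the segment $[y_0,y_1]$ gives $g(y)=t\,g(y_1)+(1-t)\,g(y_0)\le t\,g^\pi(x_1)+(1-t)\,g^\pi(x_0)$, and taking the supremum over $y\in\pi^{-1}(x)$ (the bound being independent of $y$) yields the convexity inequality. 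To prove the lemma I would pass to maximal measures: let $\mu_y$ be the unique maximal probability measure representing $y$, carried by $\mathrm{ext}(Y)$. Then $\pi_*\mu_y$ represents $x$ and is carried by $\pi(\mathrm{ext}(Y))\subseteq\mathrm{ext}(X)$, hence is maximal, hence equals $\mu_x$ by uniqueness; and since $t\mu_{x_1}+(1-t)\mu_{x_0}$ is also carried by $\mathrm{ext}(X)$ with barycenter $x$, uniqueness gives $\mu_x=t\mu_{x_1}+(1-t)\mu_{x_0}$. Setting $\rho_1=d(t\mu_{x_1})/d\mu_x$ and $\rho_0=d((1-t)\mu_{x_0})/d\mu_x$ (so $\rho_1+\rho_0=1$ $\mu_x$-a.e.) and $\nu_i=(\rho_i\circ\pi)\,\mu_y$, a direct check shows $\nu_1+\nu_0=\mu_y$, $\pi_*\nu_1=t\mu_{x_1}$ and $\pi_*\nu_0=(1-t)\mu_{x_0}$; the barycenters $y_1=\mathrm{bar}(\tfrac1t\nu_1)$ and $y_0=\mathrm{bar}(\tfrac1{1-t}\nu_0)$ then satisfy $\pi(y_i)=x_i$ and $ty_1+(1-t)y_0=\mathrm{bar}(\mu_y)=y$, since $\pi$ and the barycenter map commute with convex combinations.

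I expect the main obstacle to be precisely this lifting lemma, since all other assertions fall out immediately from continuity, from $\pi$ being a closed map, or from the definitions. The two delicate steps are verifying that $\pi_*\mu_y$ is again \emph{maximal} — this is exactly where metrizability of the simplices (so that maximal measures are those carried by the $G_\delta$ set of extreme points) and the hypothesis $\pi(\mathrm{ext}(Y))\subseteq\mathrm{ext}(X)$ enter — and invoking uniqueness of the maximal representation to identify $\pi_*\mu_y$ with $\mu_x$ and to split $\mu_x$ affinely. A minor but necessary point throughout is that the suprema defining $g^\pi$ are attained, which is guaranteed by upper semicontinuity of $g$ on the compact fibres and legitimizes both the superlevel-set identity and the supremum manipulations above.
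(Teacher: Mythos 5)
Your proof is correct; note that the paper itself gives no argument here, merely declaring these facts ``immediate or proved in [D1, Facts A.1.26 and A.2.22]''. The routine semicontinuity and concavity claims are handled exactly as one would expect, and your key lifting lemma --- decomposing any $y\in\pi^{-1}(tx_1+(1-t)x_0)$ as $ty_1+(1-t)y_0$ with $y_i\in\pi^{-1}(x_i)$ via unique maximal representing measures and the identity $\pi_*\mu_y=\mu_x=t\mu_{x_1}+(1-t)\mu_{x_0}$ --- is precisely the standard route taken in the cited reference, with metrizability and the hypothesis $\pi(\mathrm{ext}(Y))\subseteq\mathrm{ext}(X)$ entering exactly where you say they do.
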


\section{Entropy structure of $\mathbb{Z}$-actions} \label{stru}

One of key ingredients of the Symbolic Extension Entropy Theorem is the entropy structure, which reflects the emergence of different Kolmogorov--Sinai entropies of invariant measures of the considered systems at refining scales.
By Remark \ref{202002111720}, the system has finite topological entropy if it admits a symbolic extension, and so in this section we are only interested in $\mathbb{Z}$-actions with finite topological entropy. Entropy structure of $\mathbb{Z}$-actions has been discussed systematically in \cite{D0}, see also \cite{D1} for more details about it.

 \medskip

Entropy structure is introduced in two steps. At first we assume that the space $X$ is zero-dimensional. In this case we consider a refining sequence of finite clopen partitions (this dependence will be later eliminated). A partition is \emph{clopen} if all its atoms are clopen (i.e.\ both closed and open). A \sq\ $(\mathcal{P}_k)_{k\in \mathbb{N}}$ of partitions is \emph{refining} if the maximum diameter of all elements of $\mathcal{P}_k$ goes to zero with $k\rightarrow \infty$ and furthermore each partition $\mathcal{P}_{k+ 1}$ \emph{refines} $\mathcal{P}_k$ (i.e.\  each element of $\mathcal{P}_{k+ 1}$ is contained in some element of $\mathcal{P}_k$). Clearly, in any compact zero-dimensional space $X$ a refining \sq\ of finite clopen partitions exists.

\begin{definition}\label{enszero}
Let $(X, T)$ be a zero-dimenaional system with finite topological entropy and let $(\mathcal{P}_k)_{k\in \mathbb{N}}$ be a refining \sq\ of finite clopen partitions of $X$. Then an \emph{entropy structure} of $(X, T)$ is the sequence $\H=(h_k)_{\ge 0}$ of functions defined on $\mathcal{M} (X, T)$, by $h_k (\mu)= h_\mu (T, \mathcal{P}_k)$\footnote{We denote by $h_\mu (T, \mathcal{P}_k)$ the Kolmogorov--Sinai $\mu$-entropy of the partition $\mathcal{P}_k$ with respect to the invariant measure $\mu\in \mathcal{M} (X, T)$.} for every $k\in \mathbb{N}$ and all $\mu\in \mathcal{M} (X, T)$.
\end{definition}

\begin{proposition} \label{202002121502}
If $(X, T)$ is a zero-dimensional system with finite topological entropy, then:
\begin{enumerate}

\item regardless of the choice of the refining \sq\ of finite clopen partitions $(\P_k)_{k\in\N}$, the associated entropy structure of $(X,T)$ is an affine structure with u.s.c.\ differences.

\item the entropy structures obtained for different refining \sq s of finite clopen partitions are uniformly equivalent.
\end{enumerate}
\end{proposition}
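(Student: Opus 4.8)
\emph{Overall strategy.} The plan is to reduce both parts to classical properties of the single function $\mu\mapsto h_\mu(T,\mathcal P)$ attached to a fixed finite clopen partition $\mathcal P$. The feature of zero-dimensionality that I will use throughout is that the indicator of a clopen set is continuous, so $\mu\mapsto\mu(A)$ is continuous for every clopen $A$, and consequently $\mu\mapsto H_\mu(\mathcal Q)$ is continuous on $\mathcal M(X,T)$ for every finite clopen partition $\mathcal Q$ (here $H_\mu$ denotes the static Shannon entropy). Before treating (1) and (2) I would record that $\mathcal H$ really is a structure: each $h_k$ is nonnegative, the sequence is nondecreasing because $\mathcal P_{k+1}$ refines $\mathcal P_k$ and Kolmogorov--Sinai entropy is monotone under refinement, and all $h_k$ are bounded by $\htop(X)<\infty$ via the variational principle. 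Writing $\mathcal P_k^{(n)}=\bigvee_{i=0}^{n-1}T^{-i}\mathcal P_k$, subadditivity of $n\mapsto H_\mu(\mathcal P_k^{(n)})$ gives the basic representation $h_k(\mu)=\inf_n\frac1n H_\mu(\mathcal P_k^{(n)})$.

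\emph{Affinity.} To prove that each $h_k$ is affine I would use the standard sandwich estimate: for $\mu=\alpha\mu_1+(1-\alpha)\mu_2$ and any finite partition $\mathcal Q$,
\[
\alpha H_{\mu_1}(\mathcal Q)+(1-\alpha)H_{\mu_2}(\mathcal Q)\le H_\mu(\mathcal Q)\le \alpha H_{\mu_1}(\mathcal Q)+(1-\alpha)H_{\mu_2}(\mathcal Q)+\log 2,
\]
the left inequality being concavity of entropy and the right one the elementary bound on the entropy of a mixture. Applying this with $\mathcal Q=\mathcal P_k^{(n)}$, dividing by $n$ and letting $n\to\infty$ makes the additive $\log 2$ disappear, which forces $h_k(\mu)=\alpha h_k(\mu_1)+(1-\alpha)h_k(\mu_2)$; hence $\mathcal H$ is affine.

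\emph{Upper semicontinuity of the differences.} This is the delicate step, since a difference of two u.s.c.\ functions need not be u.s.c., so I cannot simply subtract. The idea is to realize $h_{k+1}-h_k$ as a single infimum of continuous functions. Because $\mathcal P_{k+1}$ refines $\mathcal P_k$, the chain rule gives $H_\mu(\mathcal P_{k+1}^{(n)})=H_\mu(\mathcal P_k^{(n)})+b_n(\mu)$ with $b_n(\mu)=H_\mu\bigl(\mathcal P_{k+1}^{(n)}\mid \mathcal P_k^{(n)}\bigr)$, so $h_{k+1}(\mu)-h_k(\mu)=\lim_n\frac1n b_n(\mu)$. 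I would then check that $(b_n)$ is subadditive: splitting $\mathcal P_{k+1}^{(n+m)}=\mathcal P_{k+1}^{(n)}\vee T^{-n}\mathcal P_{k+1}^{(m)}$, using subadditivity of conditional entropy in the first argument, the fact that enlarging the conditioning partition decreases entropy, and $T$-invariance of $\mu$, yields $b_{n+m}\le b_n+b_m$. By Fekete's subadditive lemma the limit therefore equals $\inf_n\frac1n b_n$. Finally $b_n(\mu)=H_\mu(\mathcal P_{k+1}^{(n)}\vee\mathcal P_k^{(n)})-H_\mu(\mathcal P_k^{(n)})$ is continuous in $\mu$, both partitions being finite and clopen. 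Thus $h_{k+1}-h_k$ is an infimum of continuous functions, hence u.s.c., which completes (1).

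\emph{Uniform equivalence.} For (2) I would avoid entropy estimates altogether and instead produce genuine refinements. Let $(\mathcal P_k)$ and $(\mathcal P'_k)$ be two refining sequences with structures $\mathcal H=(h_k)$ and $\mathcal H'=(h'_k)$, and fix $k_0$. The atoms of $\mathcal P_{k_0}$ are clopen, hence form a finite open cover of the compact space $X$; let $\delta>0$ be a Lebesgue number of this cover. Since $\diam(\mathcal P'_k)\to 0$, there is $k_1$ so that for $k\ge k_1$ every atom of $\mathcal P'_k$ has diameter $<\delta$ and is therefore contained in a single atom of $\mathcal P_{k_0}$; that is, $\mathcal P'_k$ refines $\mathcal P_{k_0}$, whence $h'_k\ge h_{k_0}$ pointwise by monotonicity. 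Symmetrically there is $k_2$ with $h_k\ge h'_{k_0}$ for $k\ge k_2$. Taking $k=\max(k_1,k_2)$ gives $h'_k\ge h_{k_0}$ and $h_k\ge h'_{k_0}$ simultaneously, verifying the definition of uniform equivalence (indeed with $\varepsilon=0$). The only real obstacle in the whole argument is the upper semicontinuity of the differences, where passing through the subadditive conditional-entropy sequence $(b_n)$ is essential; the affinity sandwich and the Lebesgue-number comparison are routine.
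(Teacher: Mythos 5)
Your proof is correct. Parts of it coincide with the paper's argument: for the u.s.c.\ differences you, like the authors, represent $h_{k+1}-h_k$ as $\inf_n\frac1n H_\mu\bigl(\mathcal{P}_{k+1}^{(n)}\mid\mathcal{P}_k^{(n)}\bigr)$, an infimum of functions that are continuous because the partitions are clopen (the paper states the identity directly for general indices $k\ge l$, while you justify the passage from limit to infimum via subadditivity of the conditional sequence and Fekete's lemma; the paper leaves affinity as ``trivial'' where you supply the standard $\log 2$ sandwich). Where you genuinely diverge is part (2). The paper fixes $k$, bounds $h_k-h'_l$ above by the nonnegative u.s.c.\ function $\mu\mapsto h_\mu(T,\mathcal{P}_k\vee\mathcal{P}'_l)-h_\mu(T,\mathcal{P}'_l)$, observes that this tends to zero pointwise because the system has finite topological entropy and the sequences generate, and then upgrades to uniform convergence by a Dini-type argument (their Proposition \ref{exchanging}). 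You instead exploit zero-dimensionality directly: the atoms of $\mathcal{P}_{k_0}$ form a finite open cover, so a Lebesgue-number argument shows that $\mathcal{P}'_k$ eventually \emph{refines} $\mathcal{P}_{k_0}$ as a partition, giving the pointwise domination $h'_k\ge h_{k_0}$ outright. Your route is more elementary, needs no finiteness of entropy for this step, and yields the stronger conclusion of exact domination rather than $\varepsilon$-domination; the paper's route is the one that survives in the more general setting of partitions with small boundaries (where atoms need not be open and no Lebesgue number is available), which is presumably why the authors phrase it that way even in the clopen case. Both arguments are valid here.
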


 In general, we cannot require the existence of a refining \sq\ of finite clopen partitions if $X$ is not zero dimensional, and so we need finite Borel partitions with \emph{small boundaries}, that is, such that the boundary of each atom of the partition has measure zero for every invariant measure. As an intermediate case between zero-dimensional and general, suppose that $(X, T)$ is a system with finite topological entropy and admits a refining sequence $(\mathcal{P}_k)_{k\in \mathbb{N}}$ of finite Borel partitions of $X$, with small boundaries. Then an \emph{entropy structure} of $(X, T)$ is the sequence $\H=(h_k)_{k\ge 0}$ with $h_k (\mu)= h_\mu (T, \mathcal{P}_k)$ for every $k\in \mathbb{N}$ and all $\mu\in \mathcal{M} (X, T)$.

The existence of a refining sequence of finite Borel partitions with small boundaries is essential in the above definition. Besides when $X$ is zero-dimensional, such a \sq\ exists if either $X$ is finite-dimensional and additionally the set of all periodic points of $(X, T)$ is zero-dimensional (see \cite{BFF}, the result is essentially contained in \cite{Kul}), or $(X, T)$ has finite topological entropy and admits a minimal\footnote{A system $(X, T)$ is \emph{minimal} if for every $x\in X$ the subset $\{T^n x: n\in \mathbb{Z}_+\}$ is dense in $X$.} factor consisting of infinitely many points
(see \cite{L99, LW00}).

Nevertheless, it is still possible that in some system with finite topological entropy there are no refining sequences of finite Borel partitions with small boundaries. The general case is handled with the help of the following concept introduced by Ledrappier in \cite{led}.

\begin{definition} \label{prin}
Let $\pi: Y\rightarrow X$ be an extension, where $X$ has finite topological entropy.
We say that the extension $\pi$ (also $Y$) is \emph{principal}\footnote{Principal extension was defined in \cite{led} originally via the language of relative measure-theoretic entropy, here we provide an equivalent definition in the setting of systems with finite topological entropy.} if $h_\nu (Y)= h_{\pi \nu} (X)$\footnote{We will write $h_\nu (Y, S)$ or $h_\nu (Y)$ the Kolmogorov--Sinai entropy of $(Y, S)$ with respect to an invariant measure $\nu\in \mathcal{M} (Y, S)$.
Note that $\pi$ induces naturally a continuous affine surjection, denoted still by $\pi$ which will not cause any confusion, from $\mathcal{M} (Y,
S)$ to $\mathcal{M} (X, T)$, with $(\pi \nu) (B)= \nu (\pi^{- 1} B)$ for every $\nu\in \mathcal{M} (Y, S)$ and all Borel subsets $B\subset Y$.
The map $\pi$ in fact has the additional property that it sends extreme points to extreme points.} for every $\nu\in \mathcal{M} (Y, S)$.
\end{definition}

The following basic fact provides us an auxilliary zero-dimensional system.

\begin{proposition} \label{principal}
Any system with finite topological entropy admits a zero-dimensional principal extension.
\end{proposition}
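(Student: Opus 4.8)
The plan is to construct $(Y,S)$ as an inverse limit of a tower of principal extensions
\[
X = Y_0 \xleftarrow{\ \pi_1\ } Y_1 \xleftarrow{\ \pi_2\ } Y_2 \xleftarrow{\ \pi_3\ } \cdots,
\]
in which every bonding map $\pi_n$ is principal and every $Y_n$ carries a distinguished finite clopen partition $\mathcal{Q}_n$; the limit projection $\pi\colon Y=\varprojlim_n Y_n\to X$ will then be the desired map. The guiding principle at a single level is that a finite Borel partition $\mathcal{P}$ of a system $Z$ whose atom boundaries are null for \emph{every} invariant measure yields a principal extension that is clopen in one new coordinate: recording along each orbit the $\mathcal{P}$-address of the iterates produces a symbolic coordinate that, for each invariant measure, is almost surely a function of the underlying point, so the coordinate adds no conditional entropy while turning the scale of $\mathcal{P}$ into a clopen partition upstairs.

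First I would record why principality survives the inverse limit. Relative Kolmogorov--Sinai entropy is additive along a tower (Abramov--Rokhlin), so a composition of principal extensions is again principal; hence for $\nu\in\mathcal{M}(Y,S)$ with projections $\nu_n$ one has $h_{\nu_n}(Y_n)=h_{\pi\nu}(X)$ for all $n$. On the other hand, if the pulled-back partitions $\mathcal{Q}_n$ separate the points of $Y$ they generate its Borel structure, whence $h_\nu(Y,S)=\sup_n h_{\nu_n}(Y_n)$. Combining the two identities gives $h_\nu(Y,S)=h_{\pi\nu}(X)$ for every $\nu$, i.e.\ $\pi$ is principal; and the separation of points by the $\mathcal{Q}_n$ makes $Y$ zero-dimensional. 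Thus the whole problem reduces to the single inductive step.

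The heart of the matter is this step: given a level $Z=Y_n$ (with the lifted map, still written $T$) and a continuous function $g\colon Z\to[0,1]$ — taken, as $n$ grows, from a fixed countable family separating the points of $X$ — I would build a principal extension $Z'=Y_{n+1}$ together with a finite clopen partition refining $g$ to within an error $\varepsilon_n\to 0$. In the idealized situation where finitely many thresholds $t_1<\dots<t_m$ of resolution $\varepsilon_n$ can be chosen so that each level set $g^{-1}(t_j)$ is null for all invariant measures, one simply codes, along each orbit, on which side of each $g^{-1}(t_j)$ the iterates lie and splits the (measure-null) ambiguous points accordingly; the guiding principle above then makes $\pi_{n+1}$ principal and furnishes the clopen partition $\mathcal{Q}_{n+1}$.

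I expect the genuine obstacle to be exactly that this choice of thresholds is \emph{impossible in $X$ itself}: the exceptional levels that some invariant measure charges depend on the measure and, as $\mu$ ranges over all of $\mathcal{M}(X,T)$, may fill an entire interval (already the identity map shows this). The resolution — and the technical core of the proof — is to secure a partition whose boundary is null for all invariant measures \emph{simultaneously} by passing to a suitable principal extension that pre-resolves the ambiguous fibres, and to verify via the Abramov--Rokhlin formula that no conditional entropy is created, the coding at each scale remaining over a finite alphabet. This small-boundary-in-an-extension construction is the delicate point; it is carried out in \cite{BFF} and in \cite[Chapter~7]{D1}.
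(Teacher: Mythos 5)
First, a remark on the comparison itself: the survey states Proposition \ref{principal} without any proof, deferring implicitly to the literature (\cite{BFF}, \cite[Section 7.6]{D1}), so there is no internal argument to measure your proposal against; it can only be judged against the known proofs. Your global architecture is correct and matches theirs: principality is preserved under composition of extensions (by additivity of conditional entropy along a tower, unproblematic here since all entropies are finite) and under inverse limits whose levels carry refining clopen partitions separating points, so everything does reduce to a single inductive step. Your ``guiding principle'' is also correct: if a finite Borel partition has boundaries that are null for \emph{every} invariant measure, then the closure of the graph of its orbit-coding is a principal (indeed measure-theoretically one-to-one) extension in which that scale becomes clopen, and one only needs to add the routine check that this closure still projects onto the whole space.

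The genuine gap is the single inductive step itself, and your description of its resolution is circular rather than substantive. A clopen partition in a zero-dimensional principal extension \emph{is} a partition with empty (hence null) boundaries living in a principal extension; so ``secure a partition whose boundary is null for all invariant measures simultaneously by passing to a suitable principal extension that pre-resolves the ambiguous fibres'' is a restatement of the one-scale version of the proposition, not a method for proving it. The obstruction you correctly identify --- that no choice of thresholds in $X$ itself can have boundaries null for all of $\mathcal{M}(X,T)$ at once, as the identity map already shows --- is precisely where \cite{BFF} and \cite[Section 7.6]{D1} expend all of their effort: the extension has to be built by coding against carefully constructed partitions, and one must estimate directly that the residual ambiguity (the multivalued names arising from orbits meeting the closures of several atoms) contributes zero conditional entropy. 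That estimate is the actual content of the proposition. As written, your proposal reduces the proposition to itself and then cites the sources for the reduced statement; to count as a proof it would have to carry out that one-scale construction.
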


Entropy structure of a general dynamical system is given as follows:

\begin{definition}\label{ensf}
The \emph{\ens} of a system $(X, T)$ with finite topological entropy is any structure $\H=(h_k)_{k\ge 0}$ on $\mathcal{M} (X, T)$ such that for any \zd\ principal extension $\pi': X'\to X$ and any entropy structure $\H'$ on $\mathcal{M} (X', T')$, the structure $\H$ lifted against $\pi'$ (given by $(h_k\circ \pi')_{k\ge 0}$) is uniformly equivalent to $\H'$.
\end{definition}

The existence of such a structure is nontrivial but true, see for example \cite{D0, D1}. Once the existence of at least one such structure $\H$ is guaranteed, it becomes obvious that if $X$ is zero-dimensional then the \ens\ given by Definition \ref{enszero} is consistent with Definition~\ref{ensf}, and that any other structure defined on $\mathcal{M} (X, T)$ is an entropy structure if and only if it is uniformly equivalent to $\H$. In order to obtain a notion which depends exclusively on the system $(X,T)$ (and does not depend on the choice of, say, a principal zero-dimensional extension or a refining \sq\ of finite clopen partitions in that extension), we will replace the ``individual'' \ens\ by the entire uniform equivalence class, and call this class the \emph{entropy structure}. Nevertheless, instead of saying that a particular structure $\H$ \emph{belongs to} the \ens\ we will keep saying that $\H$ \emph{is} an \ens.

It is clear that as \ens, undertood as a uniform equivalence class, is an invariant of \tl\ conjugacy in the sense that the entropy structure of one system carried over via a conjugating map is an entropy structure of the other system.

We remark that \cite{DS2} provides a characterization of structures defined on metrizable Choquet simplices which can be realized as an entropy structure of some \tl\ \ds s.

Superenvelopes of the \ens\ $\H$ of a system $(X,T)$ are used to characterize the entropy functions of symbolic extensions of the system. Note that, by Proposition \ref{fact2}, the collection of all superenvelopes does not depend on the choice of a particular \ens\ from a uniform equivalence class, in other words, superenvelopes of an \ens\ remain well defined  when \ens\ is understood as an equivalence class.
On the other hand, for technical convenience, we are free to select from the uniform equivalence class which constitutes the \ens\ a particular representative
which enjoys additional good properties. For example, we may choose (if possible) an affine \ens\ with u.s.c.\ differences. The following theorem makes such a choice possible:

\begin{theorem} \label{202002121830}
Any system with finite topological entropy admits an affine \ens\ with u.s.c.\ differences.
\end{theorem}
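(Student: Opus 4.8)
The plan is to reduce everything to the \zd\ case, where Proposition \ref{202002121502} already furnishes an affine \ens\ with \usc\ differences, and then to transport such a structure down to $X$ through a principal extension. First I would invoke Proposition \ref{principal} to fix a \zd\ principal extension $\pi\colon X'\to X$. This $\pi$ induces a continuous affine surjection between the Choquet simplices $\mathcal M(X',T')$ and $\mathcal M(X,T)$, which I still denote by $\pi$; by the footnote to Definition \ref{prin} it sends extreme points to extreme points (ergodic measures to ergodic measures). On the \zd\ system $X'$ I fix the \ens\ $\H'=(h'_k)_{k\ge 0}$ associated with a refining \sq\ of finite clopen partitions; by Proposition \ref{202002121502}(1) it is affine with \usc\ differences, and I write $h'=\lim_k h'_k$ for its limit. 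The hypothesis that $\pi$ is principal means $h_\nu(X')=h_{\pi\nu}(X)$ for all $\nu$, that is, $h'$ is constant on every fiber $\pi^{-1}(\mu)$.

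Next I would define the candidate structure on $\mathcal M(X,T)$ by pushing $\H'$ down: set $h_k=(h'_k)^\pi$, so that $h_k(\mu)=\sup\{h'_k(\nu):\pi\nu=\mu\}$. By Proposition \ref{pushdown}(2), applied to the affine extreme-point-preserving map $\pi$ between Choquet simplices, each $h_k$ is affine and \usc; monotonicity, nonnegativity, boundedness and $h_0\equiv 0$ are inherited from $\H'$, so $\H=(h_k)_{k\ge 0}$ is an affine structure. Since all \ens s form a single uniform-equivalence class, by Definition \ref{ensf} it suffices to verify $\H\circ\pi\sim\H'$ for the chosen $\pi$ and $\H'$. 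One direction is immediate from $h_k\circ\pi\ge h'_k$; the reverse is where principality enters, because $h'$ is constant on fibers and $h'_k\nearrow h'$, a Dini-type argument forces $\inf_{\pi^{-1}\mu}h'_k$ to overtake $\sup_{\pi^{-1}\mu}h'_{k_0}$ up to any $\eps$ once $k$ is large, which is exactly the second half of uniform equivalence.

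The \emph{main obstacle} is the \usc\ differences. The differences $h_{k+1}-h_k$ are affine, being differences of affine functions, but they are only differences of \usc\ functions, and such a difference need not be \usc; pushing down yields merely the one-sided bound $h_{k+1}-h_k\le (h'_{k+1}-h'_k)^\pi$, and a direct semicontinuity test breaks down because the fiberwise maximizers of $h'_{k+1}$ and of $h'_k$ need not coincide. To overcome this I would work over the ergodic measures, where each fiber $\pi^{-1}(\mu)$ is a closed face of $\mathcal M(X',T')$ on which $h'$ is constant; there the push-down rewrites as $h_k(\mu)=h_\mu(X)-\inf_{\pi^{-1}\mu}(h'-h'_k)$, so that \usc\ differences of $\H$ become equivalent to \usc\ of the successive differences of the lower push-downs of the tails $h'-h'_k$. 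Establishing this last point is the delicate step: it must combine the \usc\ differences of $\H'$ upstairs (Proposition \ref{202002121502}(1)) with principality, and if the plain push-down proves insufficient I would refine the choice of the \zd\ principal extension (and of the clopen \ens\ on it) so that the relevant fiberwise infima depend upper semicontinuously on $\mu$. This is the part I expect to require the real work, as in the detailed treatment of \cite{D0, D1}.
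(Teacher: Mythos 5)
Your reduction to the \zd\ case and the transport via a principal extension is a reasonable opening, and you are right that the push-down $(h'_k)^\pi$ is affine (Proposition \ref{pushdown}), u.s.c., and---granting the existence of \emph{some} \ens\ on $\M(X,T)$, which the paper asserts before the theorem---uniformly equivalent, after lifting, to $\H'$, hence itself an \ens. But the theorem is precisely about producing a representative with \usc\ \emph{differences}, and that is the one property your construction does not deliver: $(h'_{k+1})^\pi-(h'_k)^\pi$ is a difference of fiberwise suprema attained at possibly different maximizers, it is \emph{not} the push-down of $h'_{k+1}-h'_k$, and nothing in Propositions \ref{pushdown} or \ref{202002121502} makes it u.s.c. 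You name this yourself as the ``main obstacle'' and then offer only programmatic remarks (pass to ergodic measures, rewrite via tails, re-choose the extension), so the argument is incomplete exactly at its decisive step. A secondary soft spot: your ``Dini-type'' justification of uniform equivalence compares $h_{k_0}\circ\pi$ (u.s.c.) with $h'_k$ (u.s.c., so $-h'_k$ is l.s.c.), and Proposition \ref{exchanging} does not apply to such differences; that step can be repaired by playing the push-down against an already-known \ens\ of $X$, but not by Dini alone.

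The route actually used in the sources the paper leans on (\cite{D0,D1}; it is also visible in the paper's derivation of the amenable analogue, Theorem \ref{202002161830}, from Proposition \ref{dodado}) avoids the fiberwise supremum altogether. One passes to a principal extension carrying a \emph{canonical continuous affine section} on the level of measures---concretely $X\times[0,1]$ with the trivial action on the second coordinate and the section $\mu\mapsto\mu\times\lambda$---and replaces lifted partitions by the two-set partitions determined by subgraphs of continuous functions $f:X\to[0,1]$. The atoms of these partitions have boundaries of $\mu\times\lambda$-measure zero, so the small-boundary statement (Proposition \ref{dodado} and its $\Z$-analogue) gives affinity and \usc\ differences of $h_k(\mu):=h_{\mu\times\lambda}(\A_{\F_k})$ directly, with no push-down; the nontrivial labor is then in showing that this structure is an \ens, not in establishing its regularity. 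To salvage your framework you would have to either exhibit such a section for your \zd\ extension (which in general does not exist) or prove upper semicontinuity of the difference of push-downs, and I see no way to do the latter in general.
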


\begin{remark} \label{natural}
All results of Part \ref{1part} work for surjective $\mathbb{N}$-actions, i.e.\ actions by iterates of a continuous surjective self-map $T$ of a compact metric space $X$. Transferring the $\mathbb{N}$-action $(X,T)$ to a $\Z$-action is achieved by means of a \emph{natural extension} $(\widetilde{X}, \widetilde{T})$ which is a $\mathbb{Z}$-action defined as
$$\widetilde{X}= \{(x_n)_{n\in \Z}: T x_n= x_{n+ 1}, \forall n\in \Z\},$$ where the map $\widetilde{T}$ acts on the space $\widetilde{X}$ by the shift:
$\widetilde{T}((x_n)_{n\in \Z})= (x_{n+1})_{n\in \Z}$. Note that the notion of a principal extension applies without modification to $\mathbb{N}$-actions, and that $(\widetilde{X}, \widetilde{T})$ (viewed as an $\N$-action) is a principal extension of  $(X,T)$. Nonetheless, one has to bear in mind that in such a generalization, by a symbolic extension of $(X,T)$ we will understand a symbolic extension of $(\widetilde{X}, \widetilde{T})$ viewed as a $\mathbb{Z}$-action. In particular, all symbolic extensions are by definition two-sided subshifts ($\Z$-actions).
\end{remark}

\section{The symbolic extension theory of $\mathbb{Z}$-actions} \label{four}

 In this section we introduce the main results
of the theory of symbolic extensions for $\mathbb{Z}$-actions, including the Symbolic Extension Entropy
Theorem, the Symbolic Extension Entropy Variational Principle, and the characterization of asymptotic h-expansiveness via symbolic extensions.

\medskip

Let us firstly consider a zero-dimensional system $(X, T)$. Every such system can be given the following (conjugate) \emph{array representation}.
We choose a \sq\ of finite clopen partitions $(\P_k)_{k\in\N}$ such that $(\P_{[1,k]})_{k\in \N}$ forms a refining sequence, where $\P_{[1,k]}$ is the join partition generated by $\P_1, \dots, \P_k$.
For each $k\in \mathbb{N}$, let $\Lambda_k$ be a set of labels bijectively associated to the atoms of $\P_k$, so that $\P_k = \{P_a: a\in\Lambda_k\}$. Now according to the action $T$ we introduce:
\begin{align*}
&\forall_{k\in\N}\,\forall_{n\in \Z}\ (x_{k,n}:=a\in\Lambda_k \iff T^n (x)\in P_a\in\P_k),\\
&\forall_{k\in\N}\hskip 26pt \pi_k(x):=(x_{k,n})_{\,n\in \Z}\in\Lambda_k^\Z,\\
&\hskip 54pt \pi(x):=(\pi_k(x))_{k\in\N}=(x_{k,n})_{k\in\N,n\in \Z}\in\prod_{k\in\N}\Lambda_k^\Z.
\end{align*}
We let $X_k$ denote the image of $X$ by the map $\pi_k$ which is a subshift over $\Lambda_k$, and call it \emph{the $k$th layer} of $X$. Denote by $X_{[1,k]}$ the projection of $X$ onto the first $k$ layers, which is a subshift over the product alphabet $\Lambda_{[1,k]}=\prod_{i=1}^k\Lambda_i$. Because all partitions $\P_k$ are clopen, the maps $\pi_k$ and $\pi$ are continuous. In fact, $X_{[1,k]}$ is a factor of $X$ via a factor map $\pi_{[1,k]}$ induced naturally by $\pi_1, \dots, \pi_k$, and $\pi$ is injective. In this manner $X$ is \tl ly conjugate to $\pi (X)$, where the action on $\pi (X)$ is by the simultaneous shift of all layers. We will call $\pi (X)$ an \emph{array representation} of $X$ and because we treat conjugate systems as one, we can imagine any \zd\ system in its array representation.

The following result from \cite{D01} provides an answer to Question \ref{202002081406} for a \zd\ $\mathbb{Z}$-action. Note the the functions $h_k$ below do not represent an \ens\ but the \sq\ of differences in an \ens; the paper \cite{D01} was written before the notion of \ens\ was coined.

\begin{theorem} \label{zd-sex}
With an array representation as above for a zero-dimensional system $(X, T)$, let $(h_k)_{k\in \mathbb{N}}$ be a sequence of nonnegative functions on $\mathcal{M} (X, T)$ as follows: $\forall \mu\in \mathcal{M} (X, T)$,
$$h_1 (\mu)= h_{\pi_1 \mu} (X_1)\ \ \ \text{and}\ \ \ h_k (\mu)= h_{\pi_{[1, k]} \mu} (X_{[1, k]})- h_{\pi_{[1, k- 1]} \mu} (X_{[1, k- 1]}), \forall k\ge 2.$$
Then
$$\mathbf{h_\text{sex}} (X, T)= \inf \left\|\sum_{k\in \mathbb{N}} f_k\right\|,$$
where $\|\cdot\|$ denotes the supremum norm of a function and the infimum runs over all sequences $(f_k)_{k\in \N}$ of continuous functions on $\mathcal{M} (X, T)$ such that $f_k\ge h_k$ for all $k\in \N$.
\end{theorem}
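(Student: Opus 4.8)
Throughout write $h^{(k)}=\sum_{j=1}^k h_j$, so that $h^{(k)}(\mu)=h_{\pi_{[1,k]}\mu}(X_{[1,k]})$ and, since $(\P_{[1,k]})_k$ is refining with $\htop(X)<\infty$, one has $h^{(k)}\uparrow h_\cdot(X)$ pointwise. By Proposition~\ref{202002121502}, $\H=(h^{(k)})_{k\ge 0}$ is an affine entropy structure with u.s.c.\ differences, and those differences are precisely the $h_k$. Call a sequence $(f_k)$ of continuous functions on $\M(X,T)$ with $f_k\ge h_k$ for all $k$ \emph{admissible}, and set $Q=\inf\{\|\sum_k f_k\|:(f_k)\text{ admissible}\}$. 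The plan is to prove $\hsex(X,T)\le Q$ and $\hsex(X,T)\ge Q$ separately, each direction amounting to a translation between symbolic extensions of $X$ and admissible sequences.

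For the lower bound $\hsex(X,T)\ge Q$, I would take an arbitrary symbolic extension $\pi\colon Y\to X$ with $Y\subset\Delta^\Z$ and produce an admissible sequence with $\|\sum_k f_k\|\le\htop(Y)$; minimizing over $Y$ then gives $Q\le\hsex(X,T)$. Since $Y$ is a subshift it is expansive, so the time-zero partition is a generator and $\nu\mapsto h_\nu(Y)$ is u.s.c.\ on $\M(Y,T)$. Pushing this function down along the affine, extreme-point-preserving map $\pi\colon\M(Y,T)\to\M(X,T)$ yields, by Proposition~\ref{pushdown}, a u.s.c.\ function $E(\mu)=\sup\{h_\nu(Y):\pi\nu=\mu\}$, and the variational principle together with surjectivity of $\pi$ on measures gives $\|E\|=\htop(Y)$. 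Because every lift $\nu$ of $\mu$ satisfies $h_\nu(Y)\ge h_{\pi\nu}(X)\ge h^{(k)}(\mu)$, and because $Y$, being symbolic, is asymptotically $h$-expansive (so its entropy structure converges uniformly), the functions $E-h^{(k)}$ are nonnegative and u.s.c.; by Proposition~\ref{fact2}(3) this says exactly that $E$ is a superenvelope of $\H$. It then remains to convert $E$ into an admissible sequence, i.e.\ to find continuous $f_k\ge h_k$ with $\sum_k f_k\le\|E\|$. Here the finite-window structure of $Y$ is essential: the word-counting functions attached to $Y$ over longer and longer blocks are genuinely continuous on $\M(X,T)$, dominate the increments $h_k$ via the variational principle applied layer by layer, and telescope to a sum bounded by $\|E\|=\htop(Y)$. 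A purely functional-analytic decomposition of an arbitrary u.s.c.\ superenvelope into continuous increments need not succeed, so it is precisely the symbolic (finite-alphabet, expansive) nature of $Y$ that makes this conversion legitimate.

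For the upper bound $\hsex(X,T)\le Q$, I would fix an admissible sequence $(f_k)$ with $\|\sum_k f_k\|<Q+\eps$ and construct a subshift $Y$ over a finite alphabet, with a factor map $\pi\colon Y\to X$, such that $\htop(Y)\le\|\sum_k f_k\|$. In the array representation the information carried by the $k$th layer of $X$, conditioned on the first $k-1$ layers, has $\mu$-entropy rate $h_k(\mu)$ for every $\mu\in\M(X,T)$. The construction encodes $X$ layer by layer on a nested (Kakutani--Rokhlin) hierarchy of scales $n_1\ll n_2\ll\cdots$, recording at the $k$th scale only a catalogue of admissible continuations of layer $k$ whose cardinality over a window of length $n$ is roughly $e^{nf_k}$ rather than the raw $|\Lambda_k|^n$. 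Since $f_k\ge h_k$, the Shannon--McMillan--Breiman theorem guarantees that this catalogue captures the $\mu$-typical behaviour of layer $k$ for every $\mu$, so that $\pi$ is onto; counting words then gives $\htop(Y)=\sup_\mu\sum_k f_k(\mu)=\|\sum_k f_k\|$, and letting $\eps\to0$ yields $\hsex(X,T)\le Q$.

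I expect the realization half to be the main obstacle. The delicate points are: organizing the nested markers so that the codes at different scales are mutually consistent and define a genuine subshift; ensuring that $\pi$ maps onto \emph{all} of $X$, including the non-generic points at which Shannon--McMillan--Breiman gives no direct control; and obtaining the entropy bound $\htop(Y)\le\|\sum_k f_k\|$ uniformly, which is where the \emph{continuity} (not merely upper semicontinuity) of the $f_k$ enters, through a compactness and finite-cover argument on the Choquet simplex $\M(X,T)$ (cf.\ Proposition~\ref{exchanging}). This combinatorial construction is the technical heart of the statement; by comparison the lower bound is soft once the push-down $E$ is recognized as a superenvelope, the only subtlety there being the superenvelope-to-continuous-sum conversion, which again rests on the finite-alphabet expansive structure of $Y$.
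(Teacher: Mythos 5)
First, a caveat: the survey itself does not prove Theorem \ref{zd-sex} --- it is quoted from \cite{D01} --- so there is no in-paper argument to compare against. Your two-directional plan (translate every symbolic extension into an admissible sequence, and realize every admissible sequence by a symbolic extension) is indeed the strategy of \cite{D01}, and much of your bookkeeping is sound: the partial sums $h^{(k)}$ do form the entropy structure of Definition \ref{enszero}, $\|E\|=\htop(Y)$ by the variational principle, and $E-h^{(k)}$ is nonnegative and u.s.c. But as written this is a plan rather than a proof: both steps that carry the actual content of the theorem are asserted, not carried out.

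Concretely: (i) in the lower bound, the step ``convert $E$ into an admissible sequence with $\sum_k f_k\le\|E\|$'' is where the work lies, and the mechanism you propose does not survive scrutiny as stated. The word-counting and entropy functions naturally attached to $Y$, such as $\nu\mapsto\frac1n H(\nu,\P_\Delta^{[0,n)})$, are continuous on $\M(Y,S)$, not on $\M(X,T)$; transporting them to $\M(X,T)$ means pushing down along $\pi$, and by Proposition \ref{pushdown} push-down preserves only upper semicontinuity, not continuity. One needs a different device (in \cite{D01} the finite coding windows of $\pi$ are used to dominate the increments $h_k$ by continuous functions of the $X$-coordinates alone), and your sketch does not supply it. A smaller point: asymptotic $h$-expansiveness of $Y$ is not what makes $E-h^{(k)}$ u.s.c.; the relevant fact is that $E-h^{(k)}$ is the push-down of the conditional entropy $\nu\mapsto h_\nu(\P_\Delta\mid\P_{[1,k]})$, an infimum of continuous functions because both partitions are clopen. (ii) The upper bound \emph{is} the theorem: the marker/odometer hierarchy, the surjectivity of the coding onto all of $X$ (including non-generic points), and the uniform word count are exactly what \cite{D01} spends its length establishing, and your closing paragraph correctly identifies these as unresolved obstacles rather than resolving them. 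Until these two steps are supplied, the equality $\mathbf{h_\text{sex}}(X,T)=\inf\left\|\sum_k f_k\right\|$ has not been proved.
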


Consequently, for a \zd\ system $(X, T)$, one has $\mathbf{h_\text{sex}} (X, T)= \mathbf{h_\text{top}} (X, T)$ if, in particular,  $\mathbf{h_\text{top}} (X, T)=0$ (then $(X,T)$ has symbolic extensions with arbitrarily small entropy) or if $(X,T)$ has finite topological entropy and has only finitely many ergodic measures.  As mentioned before, we have a better result for zero-entropy systems: any zero entropy $\mathbb{Z}$-action (not only zero-dimensional) admits a symbolic extension with zero (not only arbitrarily small) topological entropy \cite{Boy91}. The above result does not imply, even when $(X, T)$ is \zd\ with $\mathbf{h_\text{sex}} (X, T)= \mathbf{h_\text{top}} (X, T)$, that there exists a symbolic extension $(Y, S)$ with $\mathbf{h_\text{top}} (Y, S)= \mathbf{h_\text{top}} (X, T)$. As we will see later, the existence of such an extension may indeed fail.

Thus, it is natural to ask when does the infimum in the definition of $\mathbf{h_\text{sex}}$ is achieved (i.e. becomes a minimum):

\begin{question} \label{202002131000}
Under what conditions does the action $(X, T)$ admit a symbolic extension $(Y, S)$ with
$\mathbf{h_\text{top}} (Y, S)= \mathbf{h_\text{sex}} (X, T)$?
\end{question}

A partial answer to this question will follow from the further discussion.
\medskip

Misiurewicz introduced in \cite{M73}  an important class of \tl\ $\Z$-systems called asymptotically $h$-expansive systems, and then characterized them in \cite{M} via the language of so-called topological conditional entropy. It was observed in \cite{D01} that, for a \zd\ system $(X, T)$, it is asymptotically $h$-expansive if and only if it admits an array representation with $\sum_{k\in \N} \mathbf{h_\text{top}} (X_k)< \infty$. Based on this, it was proved that each \zd\ asymptotically $h$-expansive $\mathbb{Z}$-system admits a symbolic extension with equal topological entropy. Note that it was remarked in \cite{D01} that this result was firstly proved by Boyle without the assumption of dimension zero, but the proof was never published until 2002 in \cite{BFF}.

\medskip

Theorem \ref{zd-sex} is not completely satisfactory because it seems rather difficult to compute $\mathbf{h_\text{sex}}$ by controlling the continuous functions $f_k$. Moreover, even though it describes some cases when $\mathbf{h_\text{sex}} (X, T)=\mathbf{h_\text{top}} (X,T)$, it is not clear that in such cases a principal symbolic extension exists. An alternative solution is to replace the functions $f_k$ by another, more manageable,
\sq\ of functions defined on the set of all invariant measures of the action. Entropy structure is such a \sq\ and indeed it can be used in this context giving much stricter results. This is the point, where it is necessary to start observing the entropy functions rather than topological entropies.

In order to compute $\mathbf{h_\text{sex}}$ more effectively, it is natural to present a refined version of Definition \ref{202002092121} involving invariant measures supported on the action as follows.

 \begin{definition} \label{202002092127}
 One can define the following functions on $\mathcal{M} (X, T)$:
 \begin{enumerate}

 \item If $(X, T)$ is a factor of $(Y, S)$ via the map $\pi$, then the \emph{extension entropy function} with respect to $\pi$, denoted by $h^\pi$, is given by
 $$\mu\mapsto \sup \{h_\nu (Y, S): \nu\in \mathcal{M} (Y, S)\ \text{and}\ \pi \nu= \mu\}$$ (in other words, $h^\pi$ is the push-down of the entropy functions on $\M(Y,S)$ to $\M(X,T)$).

 \item The \emph{symbolic extension entropy function} of $(X, T)$, denoted by $h_\text{sex}$, is given by
     $$\mu\mapsto \inf \{h^\pi (\mu): \text{$Y$ is a subshift and $\pi:Y\to X$ is a symbolic extension}\}.\footnote{It may happen that $(X, T)$ admits no symbolic extensions, thus here we use the convention $\inf \emptyset= + \infty$.}$$
 \end{enumerate}
 \end{definition}

One may ask the following question, similar to Question \ref{202002131000}:

 \begin{question} \label{202002140038}
Under what conditions does the action $(X, T)$ admit a symbolic extension $\pi$ with $h_\text{sex}\equiv h^\pi$?
\end{question}

The \emph{Symbolic Extension Entropy Theorem}, the main result obtained in \cite{BD}, which characterizes completely the extension entropy functions in symbolic extensions in terms of functional analytic properties of an entropy structure of the action, is as follows:

\begin{theorem} \label{seet}
Assume that $(X, T)$ is a system with finite topological entropy. Let $\mathcal{H}$ be an \ens\ of $(X, T)$ and let $E$ be a nonnegative finite function defined on $\mathcal{M} (X, T)$. Then:
\begin{enumerate}

\item \label{former} The function $E$ equals the extension entropy function $h^\pi$ with respect to some symbolic extension $\pi:Y\to X$ if and only if $E$ is a finite affine superenvelope of the structure $\mathcal{H}$.

\item \label{later} The function $h_\text{sex}$ is exactly the minimal superenvelope $\EH$, in particular, it is u.s.c.\ and concave.
\end{enumerate}
\end{theorem}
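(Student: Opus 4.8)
The plan is to establish the structural statement (\ref{former}) in full and then read off (\ref{later}) as a formal corollary. Using Theorem \ref{202002121830} I would fix once and for all a representative $\mathcal{H}=(h_k)_{k\ge 0}$ of the entropy structure that is affine and has u.s.c.\ differences; since the collection of superenvelopes depends only on the uniform-equivalence class (Proposition \ref{fact2}(2)), this is harmless. The argument is cleanest in a zero-dimensional model, so by Proposition \ref{principal} I would work over a zero-dimensional principal extension and its array representation, transporting the final symbolic cover back to $X$ along the principal map at the end; a principal extension preserves the Kolmogorov--Sinai entropy of every measure, so it disturbs neither $h^\pi$ nor the superenvelope condition.

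For the ``only if'' direction of (\ref{former}), let $\pi:Y\to X$ be a symbolic extension. Finiteness of $h^\pi$ is immediate since the subshift $Y$ has finite topological entropy. Affinity follows from Proposition \ref{pushdown}(2): the entropy function $\nu\mapsto h_\nu(Y,S)$ is affine on the Choquet simplex $\mathcal{M}(Y,S)$, and the induced map $\pi$ sends ergodic measures to ergodic measures, so the push-down $h^\pi$ stays affine. The substantive point is that $h^\pi$ is a superenvelope. The inequality $h^\pi\ge h_k$ is elementary: entropy cannot increase under a factor map, so for any $\nu$ with $\pi\nu=\mu$ one has $h_\nu(Y)\ge h_\mu(X)=\lim_k h_k(\mu)\ge h_k(\mu)$, whence $h^\pi(\mu)\ge h_k(\mu)$. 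For the defects $\widetilde{h^\pi-h_k}-(h^\pi-h_k)$ I would invoke the fact that the extension entropy function of any asymptotically $h$-expansive extension is a superenvelope of the base's entropy structure; since $Y$, being a subshift, is expansive and hence asymptotically $h$-expansive, its own entropy structure reaches its limit at a finite scale, which forces the residual entropy of $Y$ above the $k$-th scale of $X$ to be carried by a vanishing, asymptotically continuous tail.

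The ``if'' direction of (\ref{former}) is the heart of the theorem and the step I expect to be the main obstacle. Given a finite affine superenvelope $E$, I must build a subshift $Y$ and a factor map $\pi:Y\to X$ with $h^\pi=E$. Working in the array representation, the superenvelope condition lets me interpolate between the entropy structure and $E$ by continuous functions, exactly as in the mechanism behind Theorem \ref{zd-sex}, where $h_{\text{sex}}$ is expressed through continuous $f_k$ dominating the successive differences of the structure. I would then construct $Y$ layer by layer by a marker/coding scheme: the $k$-th layer encodes the $k$-th layer of $X$ together with a controlled quantity of ``spurious'' symbols whose local frequency is dictated by the chosen continuous approximants. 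The delicate points are (i) arranging the markers so that $Y$ is genuinely a shift-invariant subshift while $\pi$ remains continuous, and (ii) verifying by a Shannon--McMillan--Breiman and variational computation that, for every $\mu$, the fibre-supremum $\sup_{\pi\nu=\mu}h_\nu(Y)$ equals exactly $E(\mu)$ --- the upper bound being enforced by $E\ge h_k$ and the lower bound (saturation) by the vanishing of the defects. Affinity of $E$ is precisely what keeps the per-measure accounting consistent across the whole simplex.

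Finally, (\ref{later}) follows formally. By (\ref{former}) the realizable extension entropy functions are exactly the finite affine superenvelopes of $\mathcal{H}$, so
$$
h_{\text{sex}}=\inf\{h^\pi:\pi\text{ a symbolic extension}\}=\inf\{\text{finite affine superenvelopes of }\mathcal{H}\}.
$$
Since $\mathcal{M}(X,T)$ is a metrizable Choquet simplex and $\mathcal{H}$ was chosen affine with u.s.c.\ differences, Proposition \ref{fact2}(4) identifies the pointwise infimum of all affine superenvelopes with the minimal superenvelope $\EH$; when $\EH$ is finite, restricting to finite affine superenvelopes does not change the infimum, and when it is infinite no symbolic extension exists, so both sides are $+\infty$. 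Hence $h_{\text{sex}}=\EH$, which is u.s.c.\ (by Proposition \ref{fact2}(3) when finite, as $\EH-h_0=\EH$, and trivially otherwise) and concave by Proposition \ref{fact2}(4).
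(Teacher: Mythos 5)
Your overall architecture coincides with the paper's: fix an affine entropy structure with u.s.c.\ differences via Theorem \ref{202002121830}, reduce to a zero-dimensional principal extension via Proposition \ref{principal}, prove the ``only if'' direction of (\ref{former}) by push-down, and obtain (\ref{later}) from (\ref{former}) exactly as in Remark \ref{202002131530} via Proposition \ref{fact2}. Two remarks on the ``only if'' direction. First, your finiteness/affinity/domination arguments are fine, but your treatment of the defects is circular as written: ``the extension entropy function of an asymptotically $h$-expansive extension is a superenvelope of the base's entropy structure'' is essentially the statement being proved, and the gloss about a ``vanishing, asymptotically continuous tail'' is not an argument. The paper's route is concrete and you should adopt it: by Proposition \ref{fact2}(3) it suffices to show $h^\pi-h_k$ is u.s.c.\ for each $k$; identify $h_\nu(Y)-h_k(\pi\nu)$ with the conditional entropy $h_\nu(\P_\Lambda\mid\P_{[1,k]})$ of the alphabet partition of $Y$ given the lifted clopen partition, which is an infimum of continuous functions and hence u.s.c., and then push down using Proposition \ref{pushdown}(2).

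The genuine gap is the ``if'' direction, which you correctly flag as the heart of the theorem but then describe at a level that does not constitute a proof and, as stated, would fail. A layerwise ``marker/coding scheme with spurious symbols at frequencies dictated by continuous approximants'' controls, at best, the topological entropy of $Y$ and upper bounds on fibre entropies; it does not produce the exact saturation $\sup_{\pi\nu=\mu}h_\nu(Y)=E(\mu)$ simultaneously for \emph{every} invariant measure $\mu$, which is where all the difficulty lies (in particular the lower bound at every ergodic measure, and the consistency across non-ergodic $\mu$ via barycenters, which is where affinity of $E$ is actually used). The missing device is the \emph{oracle}: an integer-valued function on rectangular blocks of the array representation, derived from $E$ through the superenvelope condition and the affine (Choquet) structure of $\mathcal M(X,T)$, which prescribes for each block of each orbit the exact number of preimage blocks the symbolic extension must have over it; the extension is then built as a two-row subshift (an odometer-driven parsing row plus a coding row governed by the oracle), and the verification that $h^\pi=E$ is a separate nontrivial step. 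Without the oracle (or an equivalent mechanism guaranteeing both the upper bound from $E\ge h_k$ and the attainment forced by the vanishing of the defects), the construction cannot be completed as sketched; this is precisely the content of \cite{BD} and \cite[Chapter 9]{D1} that your proposal presupposes rather than supplies.
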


\begin{remark} \label{202002131530}
As the system $(X, T)$ always admits an affine entropy structure with u.s.c.\ differences (see Theorem \ref{202002121830}), \eqref{later} follows directly from \eqref{former} via Proposition \ref{fact2}.
\end{remark}

 Inspired by the global variational principle concerning entropy of $\mathbb{Z}$-actions, it is natural to expect that there is a similar variational principle concerning symbolic extension entropy in the setting of $\mathbb{Z}$-actions.
 In fact this is the case, see \cite{BD, D1}. \emph{Symbolic Extension Entropy Variational Principle} provides a satisfactory answer to Question \ref{202002081406} for $\mathbb{Z}$-actions.

 \begin{theorem} \label{sex-vp}
$\mathbf{h_\text{sex}} (X, T)= \sup \{h_\text{sex} (\mu): \mu\in \mathcal{M} (X, T)\}=\sup \{\EH (\mu): \mu\in \mathcal{M} (X, T)\}$.
 \end{theorem}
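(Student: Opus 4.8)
The plan is to reduce the statement to an abstract minimax identity for superenvelopes and then exchange an infimum with a supremum via Sion's minimax theorem. The second equality, $\sup_\mu h_\text{sex}(\mu) = \sup_\mu \EH(\mu)$, is immediate from Theorem~\ref{seet}\eqref{later}, which identifies $h_\text{sex}$ with the minimal superenvelope $\EH$; so all the content lies in the first equality. If $(X,T)$ admits no symbolic extension, then by convention $\mathbf{h_\text{top}}$-wise $\mathbf{h_\text{sex}}(X,T)=\infty$ and $\EH$ is the constant infinity function, so both sides are $\infty$; hence I may assume symbolic extensions exist. By Theorem~\ref{202002121830} I fix once and for all an affine entropy structure $\H=(h_k)_{k\ge 0}$ with u.s.c.\ differences, so that Propositions~\ref{fact2} and~\ref{seet} apply in their sharpest form.

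First I would rewrite $\mathbf{h_\text{sex}}$ as an infimum of suprema. For any symbolic extension $\pi:Y\to X$, the topological variational principle gives $\mathbf{h_\text{top}}(Y)=\sup_{\nu\in\mathcal{M}(Y,S)}h_\nu(Y,S)$, and grouping the measures $\nu$ according to their (surjective) projections $\pi\nu=\mu$ turns this into $\mathbf{h_\text{top}}(Y)=\sup_{\mu\in\mathcal{M}(X,T)}h^\pi(\mu)$, where $h^\pi$ is the extension entropy function of Definition~\ref{202002092127}, i.e.\ the push-down of the entropy function on $\mathcal{M}(Y,S)$. By Theorem~\ref{seet}\eqref{former}, as $\pi$ ranges over all symbolic extensions the functions $h^\pi$ range over exactly the finite affine superenvelopes of $\H$. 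Writing $\mathcal{E}$ for the (nonempty) set of finite affine superenvelopes of $\H$, I obtain the identity
$$\mathbf{h_\text{sex}}(X,T)=\inf_{E\in\mathcal{E}}\ \sup_{\mu\in\mathcal{M}(X,T)}E(\mu).$$

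Next I would swap the order of $\inf$ and $\sup$. By Proposition~\ref{fact2}(4) one has $\EH(\mu)=\inf_{E\in\mathcal{E}}E(\mu)$ pointwise (the constant infinity superenvelope is irrelevant since $\mathcal{E}\neq\emptyset$), so the desired first equality is precisely the minimax identity $\inf_{E}\sup_{\mu}E(\mu)=\sup_{\mu}\inf_{E}E(\mu)$. To apply Sion's minimax theorem I equip $\mathcal{E}$ with the topology of pointwise convergence and verify its hypotheses: $\mathcal{M}(X,T)$ is compact and convex; $\mathcal{E}$ is convex, because for $E_1,E_2\in\mathcal{E}$ and $t\in[0,1]$ the combination $tE_1+(1-t)E_2$ is finite and affine, dominates each $f_k$, and has vanishing defects since $\widetilde{\,tE_1+(1-t)E_2-f_k\,}\le t\,\widetilde{E_1-f_k}+(1-t)\,\widetilde{E_2-f_k}$ by positive homogeneity and subadditivity of the u.s.c.\ envelope; the evaluation $E\mapsto E(\mu)$ is linear, hence quasi-convex, and continuous, hence l.s.c.; and for fixed $E\in\mathcal{E}$ the function $\mu\mapsto E(\mu)$ is affine, hence quasi-concave, and u.s.c.\ by Proposition~\ref{fact2}(3) (every superenvelope of a structure with u.s.c.\ differences is u.s.c.). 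With the compact factor $\mathcal{M}(X,T)$ these are exactly Sion's hypotheses, so the exchange is legitimate and the proof concludes.

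The hard part is this last step, namely producing a single affine superenvelope whose supremum is close to $\sup_\mu\EH$. One cannot simply use $\EH$ itself, which is only concave, nor the constant function $\sup_\mu\EH$, which fails to be a superenvelope unless the lower defects of the entropy structure vanish (an asymptotic $h$-expansiveness condition); the minimax theorem is what finesses this. Consequently the real obstacles are the structural inputs that make Sion applicable: the convexity of $\mathcal{E}$ and the upper semicontinuity in $\mu$ of every affine superenvelope, both of which I would extract from Proposition~\ref{fact2} and the choice of an affine entropy structure with u.s.c.\ differences. Note that the easy inequality $\mathbf{h_\text{sex}}(X,T)\ge\sup_\mu h_\text{sex}(\mu)$ is automatic (it is the trivial direction $\inf\sup\ge\sup\inf$), so the minimax theorem is needed only to furnish the reverse inequality.
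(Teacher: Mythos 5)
Your proof is correct, but it establishes the crucial exchange of infimum and supremum by a genuinely different mechanism from the one the paper points to. The common skeleton is the same: Theorem \ref{seet}\eqref{former} together with the classical variational principle rewrites $\mathbf{h_\text{sex}}(X,T)$ as $\inf_{E}\sup_{\mu}E(\mu)$ over the set of finite affine superenvelopes of a fixed affine entropy structure with u.s.c.\ differences, and Proposition \ref{fact2}(4) identifies $\sup_{\mu}\EH(\mu)$ with $\sup_{\mu}\inf_{E}E(\mu)$, so everything reduces to a minimax identity. Remark \ref{202002131630} indicates that the paper's route (following \cite{BD,D1}) derives this from Proposition \ref{exchanging}, the monotone exchange for decreasing sequences of u.s.c.\ functions, applied to auxiliary functions manufactured from the structure --- whence the warning that the principle is not obtained ``by simply exchanging suprema and infima''. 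You instead invoke Sion's minimax theorem on the product of the convex set of finite affine superenvelopes with the compact convex set $\mathcal{M}(X,T)$, and you verify its hypotheses correctly: convexity of the family of finite affine superenvelopes (which, once Proposition \ref{fact2}(3) is available, is immediate because ``$E-f_k$ nonnegative and u.s.c.'' is preserved under convex combinations --- your subadditivity-of-envelopes computation proves the same thing with more effort), upper semicontinuity and affinity of each such $E$ in $\mu$, and linearity plus continuity of evaluation in $E$ for the topology of pointwise convergence. What your route buys is that no near-optimal affine superenvelope has to be exhibited by hand: convexity of the family of extension entropy functions does that work inside Sion's theorem. What it costs is reliance on a convexity-based minimax theorem where the paper needs only the elementary Proposition \ref{exchanging}. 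Your handling of the degenerate case and of the second equality via Theorem \ref{seet}\eqref{later} is fine.
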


\begin{remark} \label{202002131630}
The variational principle can be proved by combining Proposition \ref{exchanging} with Theorem \ref{seet}, though it is not a direct consequence by simply exchanging suprema and infima.
\end{remark}

\begin{remark} With the help of Theorems \ref{seet} and \ref{sex-vp}, it can be proved that the topological symbolic extension entropy $\mathbf{h_\text{sex}}$ and the symbolic extension entropy function $h_\text{sex}$ both behave with respect to powers and products ``as one might expect". See \cite{BD06} for details.
\end{remark}

Observe that by Proposition \ref{pushdown} the extension entropy function is affine. Then by Theorems ~\ref{seet} and \ref{sex-vp} one can answer Questions \ref{202002131000} and \ref{202002140038} as follows.

\begin{theorem} \label{attain}
Let $(X, T)$ be a system with finite topological entropy and an \ens\ $\mathcal{H}$. Assume that it admits a symbolic extension. Then:
\begin{enumerate}
\item There exists a symbolic extension $\pi: (Y, S)\rightarrow (X, T)$ such that $\forall \mu\in \mathcal{M} (X, T)$ \ $h^\pi (\mu)= h_\text{sex} (\mu)$ if and only if the minimal superenvelope $\EH$ is an affine function on $\mathcal{M} (X, T)$.\footnote{By \cite[Theorem 4.6]{BD} this happens, in particular, if the set of all ergodic measures is closed.}
\item There exists a symbolic extension $\pi: (Y, S)\rightarrow (X, T)$ such that $\mathbf{h_\text{top}} (Y, S)= \mathbf{h_\text{sex}} (X, T)$ if and only if
    there exists an affine superenvelope $E_A$ of the structure $\mathcal{H}$ such that
    $$\sup_{\mu\in \mathcal{M} (X, T)} E_A (\mu)\footnote{Given such an affine superenvelope, there exists a symbolic extension $\pi: (Y, S)\rightarrow (X, T)$ such that $h^\pi= E_A$, and then one has that the supremum of the function $E_A$ is exactly the topological entropy of $(Y, S)$, by the classical variational principle concerning entropy of $\mathbb{Z}$-actions \cite{Good71, Good69, M75}.}= \sup_{\mu\in \mathcal{M} (X, T)} \EH (\mu).$$
\end{enumerate}
\end{theorem}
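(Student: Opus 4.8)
The plan is to derive both parts as formal consequences of the Symbolic Extension Entropy Theorem (Theorem~\ref{seet}) together with the Symbolic Extension Entropy Variational Principle (Theorem~\ref{sex-vp}), since those two results already encapsulate the entire difficulty of constructing symbolic extensions with a prescribed entropy function. Throughout I would use three facts. First, by Theorem~\ref{seet}\eqref{former}, a nonnegative finite function is the extension entropy function $h^\pi$ of some symbolic extension if and only if it is a finite affine \se\ of $\mathcal{H}$. Second, by Theorem~\ref{seet}\eqref{later}, the symbolic extension entropy function satisfies $h_\text{sex}=\EH$. Third, by Proposition~\ref{pushdown}, every extension entropy function $h^\pi$ is affine. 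The standing hypothesis that $X$ admits a symbolic extension guarantees, via Theorem~\ref{sex-vp}, that $\sup_\mu\EH(\mu)=\mathbf{h_\text{sex}}(X,T)<\infty$, so $\EH$ and every candidate \se\ appearing in the argument are finite and bounded.

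For part~(1), the forward implication is immediate: if a symbolic extension $\pi$ satisfies $h^\pi\equiv h_\text{sex}=\EH$, then $\EH$ is affine because $h^\pi$ is. Conversely, if $\EH$ is affine, then $\EH=h_\text{sex}$ is a finite affine \se\ of $\mathcal{H}$, so Theorem~\ref{seet}\eqref{former} directly supplies a symbolic extension $\pi$ with $h^\pi=\EH=h_\text{sex}$. This settles part~(1) with no further construction.

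For part~(2), the one supplementary ingredient I would establish is the identity $\sup_{\mu\in\mathcal{M}(X,T)}h^\pi(\mu)=\mathbf{h_\text{top}}(Y,S)$ for any symbolic extension $\pi\colon Y\to X$. This follows from the classical variational principle applied to $(Y,S)$ together with the surjectivity of the induced map $\pi\colon\mathcal{M}(Y,S)\to\mathcal{M}(X,T)$: the fibers $\{\nu:\pi\nu=\mu\}$ exhaust $\mathcal{M}(Y,S)$, and since $h^\pi$ is the push-down of the entropy function, taking the supremum over $\mu$ and then over each fiber recovers $\sup_\nu h_\nu(Y,S)=\mathbf{h_\text{top}}(Y,S)$. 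Granting this, suppose first that $\pi$ realizes $\mathbf{h_\text{top}}(Y,S)=\mathbf{h_\text{sex}}(X,T)$; then $E_A:=h^\pi$ is a finite affine \se\ of $\mathcal{H}$ by Theorem~\ref{seet}\eqref{former}, and $\sup_\mu E_A=\mathbf{h_\text{top}}(Y,S)=\mathbf{h_\text{sex}}(X,T)=\sup_\mu\EH$, the last step by Theorem~\ref{sex-vp}. Conversely, given an affine \se\ $E_A$ with $\sup_\mu E_A=\sup_\mu\EH$, I would first observe that $E_A$ is bounded (being affine and bounded above by the finite quantity $\mathbf{h_\text{sex}}(X,T)$), hence a finite affine \se; Theorem~\ref{seet}\eqref{former} then produces a symbolic extension $\pi$ with $h^\pi=E_A$, so that $\mathbf{h_\text{top}}(Y,S)=\sup_\mu E_A=\sup_\mu\EH=\mathbf{h_\text{sex}}(X,T)$ by Theorem~\ref{sex-vp}.

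Because all of the genuinely hard analytic and combinatorial content is already absorbed into Theorem~\ref{seet}, the remaining obstacle is purely organizational. The two points demanding care are finiteness bookkeeping (verifying that each affine \se\ invoked is honestly finite, so that Theorem~\ref{seet}\eqref{former} is applicable) and a clean proof of the identity $\sup_\mu h^\pi=\mathbf{h_\text{top}}(Y,S)$, which hinges on $\pi$ mapping $\mathcal{M}(Y,S)$ onto $\mathcal{M}(X,T)$. Once these are in place, both equivalences drop out as above.
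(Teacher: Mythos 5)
Your proposal is correct and follows exactly the route the paper itself indicates: Theorem \ref{attain} is presented there as a formal consequence of Theorem \ref{seet}, Theorem \ref{sex-vp}, and the affinity of $h^\pi$ from Proposition \ref{pushdown}, with the identity $\sup_\mu h^\pi(\mu)=\mathbf{h_\text{top}}(Y,S)$ supplied by the classical variational principle precisely as in your argument. Your finiteness bookkeeping (ruling out the constant-infinity superenvelope via $\sup_\mu\EH(\mu)=\mathbf{h_\text{sex}}(X,T)<\infty$) is the right check and is handled correctly.
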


As another corollary of Theorem \ref{seet}, one has the following beautiful characterization of asymptotically $h$-expansive $\mathbb{Z}$-actions
 (see \cite{HYZ10, HYZ14} for more characterizations about this notion). We remark that the equivalence of \eqref{fir} $\Longleftrightarrow$ \eqref{third} was established firstly in \cite{BFF}.

\begin{theorem} \label{ahe}
Let $(X, T)$ be a system with finite topological entropy. Then the following statements are equivalent:
 \begin{enumerate}

 \item \label{fir} The action $(X, T)$ is asymptotically $h$-expansive.

 \item The action $(X, T)$ admits an \ens\ which converges uniformly to the entropy function $\mu\mapsto h_\mu (X, T)$ on $\mathcal{M} (X, T)$.

 \item \label{third} The action $(X, T)$ admits a principal symbolic extension.

 \item For every $\eps> 0$, the action $(X, T)$ admits a symbolic extension $(Y, S)$ with relative entropy at most $\eps$, that is, $h_\nu (Y| X)\le \eps, \forall \nu\in \mathcal{M} (Y, S)$.\footnote{The conditional entropy $h_\nu (Y| X)$ will be introduced in the setting of amenable group actions in \S \ref{amenable}.}

 \item The entropy function on $\mathcal{M} (X, T)$ is the minimal superenvelope of the entropy structure of $(X, T)$.
 \end{enumerate}
\end{theorem}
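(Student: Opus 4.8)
The plan is to fix once and for all an affine \ens\ $\H=(h_k)_{k\ge 0}$ of $(X,T)$ with u.s.c.\ differences (available by Theorem~\ref{202002121830}) and to denote by $h=\lim_k h_k$ its limit function, which coincides with the entropy function $\mu\mapsto h_\mu(X,T)$. Since $(X,T)$ has finite topological entropy, $h$ is bounded, and since Kolmogorov--Sinai entropy is affine in the measure, $h$ is affine; thus $h$ is a legitimate candidate for a finite affine \se. As every \se\ dominates each $h_k$ and hence $h$, one always has $\EH\ge h$ (minimality of $\EH$ is Proposition~\ref{fact2}). I would organise all equivalences around condition~(5), namely $\EH=h$, connecting each of the remaining conditions to it.

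First I would establish (2)$\Leftrightarrow$(5). Writing $u_k=\widetilde{h-h_k}$ for the defects, these are nonnegative, u.s.c.\ and decreasing in $k$, and the pointwise sandwich $0\le h-h_k\le u_k\le\|h-h_k\|$ shows that $h_k\to h$ uniformly if and only if $u_k\to 0$ uniformly. On the other hand $h$ is a \se\ of $\H$ precisely when $u_k\to 0$ pointwise; but $(u_k)$ is a decreasing \sq\ of u.s.c.\ functions on the compact space $\mathcal M(X,T)$, so once its pointwise limit is the continuous constant $0$ the convergence is automatically uniform, by the Dini-type consequence of Proposition~\ref{exchanging}. Hence ``$u_k\to 0$ pointwise'', ``$u_k\to 0$ uniformly'' and ``$h_k\to h$ uniformly'' coincide; and, since $\EH\ge h$, ``$h$ is a \se'' is the same as $\EH=h$. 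This gives (2)$\Leftrightarrow$(5).

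Next I would close the loop (5)$\Rightarrow$(3)$\Rightarrow$(4)$\Rightarrow$(5) through the Symbolic Extension Entropy Theorem. If $\EH=h$, then $h$ is a finite affine \se\ of $\H$, so by Theorem~\ref{seet}\eqref{former} there is a symbolic extension $\pi\colon Y\to X$ with $h^\pi=h$; because $h_\nu(Y)\ge h_{\pi\nu}(X)=h(\pi\nu)$ for every $\nu$ while $\sup_{\pi\nu=\mu}h_\nu(Y)=h^\pi(\mu)=h(\mu)$, equality $h_\nu(Y)=h_{\pi\nu}(X)$ holds for all $\nu$, i.e.\ $\pi$ is principal, which is~(3). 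A principal symbolic extension satisfies $h_\nu(Y\mid X)=h_\nu(Y)-h_{\pi\nu}(X)=0\le\eps$ for every $\nu$, hence serves all $\eps>0$ simultaneously, giving~(4). Finally, assuming~(4), for each $\eps>0$ pick a symbolic extension $\pi_\eps$ with $h_\nu(Y_\eps\mid X)\le\eps$; pushing down $h_\nu(Y_\eps)\le h_{\pi_\eps\nu}(X)+\eps$ yields $h^{\pi_\eps}\le h+\eps$, and by Theorem~\ref{seet}\eqref{former} $h^{\pi_\eps}$ is a \se, whence $\EH\le h^{\pi_\eps}\le h+\eps$. Letting $\eps\to 0$ gives $\EH\le h$, and with $\EH\ge h$ we recover~(5). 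Thus (2)--(5) are all equivalent.

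The remaining and hardest link is (1)$\Leftrightarrow$(2), where the purely topological notion of asymptotic $h$-expansiveness must be matched with the measure-theoretic uniform convergence of the \ens. Here I would invoke Misiurewicz's characterisation of asymptotic $h$-expansiveness through topological conditional entropy \cite{M73, M}: the crux is the identity $h^*(T)=\lim_k\sup_{\mu}(h-h_k)(\mu)=\lim_k\|u_k\|$ relating the topological conditional entropy to the uniform size of the defects, so that $h^*(T)=0$ becomes equivalent to $u_k\to 0$ uniformly, i.e.\ to condition~(2). For a \zd\ system this is transparent from the array representation, where asymptotic $h$-expansiveness means $\sum_{k}\mathbf{h_\text{top}}(X_k)<\infty$ and this controls the uniform tails of the differences of the \ens; the general case follows because, by Proposition~\ref{principal} and Definition~\ref{ensf}, both asymptotic $h$-expansiveness and the \ens\ (up to uniform equivalence) pass unchanged to a \zd\ principal extension. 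I expect this bridge to be the main obstacle, as it is precisely the step that leaves the functional-analytic framework of superenvelopes and requires the external input of Misiurewicz's theorem together with the variational identity for topological conditional entropy; by contrast the equivalences among (2)--(5) are formal consequences of Theorem~\ref{seet} and the Dini-type argument.
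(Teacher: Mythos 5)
Your proof is correct and follows the route the paper itself indicates: Theorem \ref{ahe} is presented there as a corollary of Theorem \ref{seet}, and your cycle $(5)\Rightarrow(3)\Rightarrow(4)\Rightarrow(5)$ together with the Dini-type argument (via Proposition \ref{exchanging}) for $(2)\Leftrightarrow(5)$ is exactly that derivation. You have also correctly isolated $(1)\Leftrightarrow(2)$ as the one non-formal step requiring the external tail-entropy variational principle from \cite{D0}, which matches the paper's treatment (it likewise defers this part, crediting \cite{BFF} for the equivalence \eqref{fir}$\Leftrightarrow$\eqref{third}).
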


\part{The theory of symbolic extensions for amenable group actions}

Recall that \emph{we have assumed $G$ to be a discrete countably infinite amenable group with the unity $e$}.

\medskip

This part is devoted to
describing a possible theory of symbolic extensions for amenable group actions, which is mainly based on results in \cite{DH, DHZ, DZ, H}.
As we shall see, the case of general amenable group actions is much more complicated
than expected, for example, it even remains open till now if there exists a symbolic \emph{free}\footnote{The action $(X, G)$ is called \emph{free} provided that $g(x)=x$ for at least one $x\in X$ implies $g=e$. We remark that any extension of a free action is still a free action, also that the standard irrational rotation on the circle is a free $\mathbb{Z}$-action with zero topological entropy.} action with zero topological entropy. It is not even known whether every system with zero entropy (even zero-dimensional) admits a symbolic extension.
This is why the Symbolic Extension Entropy Theorem was formulated in \cite{DZ} for general amenable group actions in a slightly weaker version, which we call
Quasi-Symbolic Extension Entropy Theorem, in which symbolic extensions are replaced by quasi-symbolic extensions. Some sufficient conditions under which quasi-symbolic extensions can be replaced by genuine symbolic extensions were discussed in \cite{DZ}.

\section{Basics for amenable group actions} \label{amenable}

In this section we will make some preparations for discussing the case of amenable group actions, in particular we present elements of the entropy theory and theory of entropy structure for such actions.

\medskip

Amenability of a group was introduced by Neumann \cite{vN}. Nowadays there are many equivalent ways of defining amenability and most of them apply to groups much more general than countable (see for example \cite{P, Pi}). In this survey we will use the one which fits us best, which relies on the concept of a F\o lner \sq\ introduced by F\o lner \cite{Fo}.
We will use $|F|$ to denote the cardinality of a set $F$. Given a finite\footnote{Here by a finite set we are always meaning a nonempty finite set.} set $F\subset G$ and $\varepsilon>0$, an \emph{$\varepsilon$-modification} of $F$ is any set $F'$ such that $\frac{|F\triangle F'|}{|F|}<\varepsilon$, where $\triangle$ denotes the symmetric difference of sets. An $\varepsilon$-modification of $F$ which is also a subset of $F$ will be called a \emph{$(1- \varepsilon)$-subset} of $F$. If $K$ is another finite subset of $G$ then $F$ is
called \emph{$(K, \varepsilon)$-invariant} if $K F$ is an $\varepsilon$-modification of $F$, where $K F= \{k f: k\in K, f\in F\}$.

\begin{definition}\label{tyi}
A \sq\ $(F_n)_{n\in\N}$ of finite subsets of $G$ is called a \emph{F\o lner \sq} if, for every finite set $K$ and every $\varepsilon>0$, the sets $F_n$ are eventually (i.e.\  except for finitely many indices $n$) $(K,\varepsilon)$-\inv. A countable group which possesses a F\o lner \sq\ is called \emph{amenable}.
\end{definition}

The class of  discrete countable amenable groups includes all Abelian, nilpotent, and more generally, solvable groups. We remark that any countable amenable group possesses a F\o lner \sq\ $(F_n)_{n\in\N}$ with the following additional properties\footnote{In the special case of $G=\Z$ we can take $F_n=\{- n, - n+ 1, \dots, n\}$ for each $n\in \N$.}, where the first two properties are easily obtained and for the existence of symmetric F\o lner \sq s see \cite[Corollary 5.3]{N}:
\begin{enumerate}

\item \emph{centered}: $\forall_{n\in\N}\ e\in F_n$.

\item \emph{nested}: $\forall_{n\in\N}\ F_n\subset F_{n+1}$.

\item \emph{symmetric}: $\forall_{n\in\N}\ F_n=F_n^{-1}$ (by convention, $F_n^{-1}=\{f^{-1}:f\in F_n\}$).
\end{enumerate}

\medskip
For a $G$-action $(X, G)$ of a countable amenable group we have well-defined notions of measure-theoretic entropy $h_\mu (X, G)$ (also denoted by $h_\mu (X)$) for every $\mu\in \MGX$ and topological entropy $\htop (X,G)$, see for example \cite{KLbook, MO85, OW}. Let us briefly recall the basics.

 Let $\P$ be a finite Borel partition of $X$ and $\mu\in \MGX$. The \emph{Shannon entropy of $\P$ with respect to $\mu$} equals
$$
H(\mu,\P)=-\sum_{P\in\P}\mu(P)\log(\mu(P))\le\log|\P|.
$$
Now, given a finite set $F\subset G$, by $\P^F$ we mean the join partition generated by $g^{- 1} (\P), g\in F$, that is,
$$
\P^F=\bigvee_{g\in F}g^{-1}(\P)=\Bigl\{\bigcap_{g\in F}g^{-1}(P_g): \forall_{g\in F}\ P_g\in\P\Bigr\}.
$$
One of elementary properties of the Shannon entropy, is the following subadditivity property:
$$\forall\ \text{finite}\ F_1, F_2\subset G, H(\mu,\P^{F_1\cup F_2})\le H(\mu,\P^{F_1})+H(\mu,\P^{F_2}).
$$
It can be proved that, for any given F\o lner \sq\ $(F_n)_{n\in\N}$ in $G$, the limit $\lim_{n\rightarrow \infty} \frac1{|F_n|}H(\mu,\P^{F_n})$ exists and its value is independent of the selection of the F\o lner \sq. This is due to the well-known Ornstein--Weiss Lemma proved via the standard quasitiling machinery built by Ornstein and Weiss for countable amenable groups (see for example \cite{LW00, OW}, see also \cite{Gr, HYZ, WZ}). Thus one can define, given any F\o lner \sq\ $(F_n)_{n\in\N}$ in $G$, the \emph{dynamical entropy of $\P$ with respect to $\mu$} as
$$
h_\mu (G, \P) = \lim_{n\rightarrow \infty} \frac1{|F_n|}H(\mu,\P^{F_n})
$$
(if the action of $G$ is understood, we will simply write $h_\mu (\P)$; similar convention will be used later for conditional entropy).
In fact, strong subadditivity holds for the Shannon entropy (see for example \cite{DFR, MO85})
$$
H(\mu,\P^{F_1\cup F_2})\le H(\mu,\P^{F_1})+H(\mu,\P^{F_2})-H(\mu,\P^{F_1\cap F_2}),
$$
and then one can prove that the limit defining the dynamical entropy of a partition equals
$$
\inf \left\{\frac1{|F|}H(\mu,\P^F): F\ \text{is a finite subset of}\ G\right\}.
$$
Now the \emph{Kolmogorov--Sinai entropy of $\mu$} with respect to the action $(X, G)$ is defined as
$$
h_\mu (X, G)=\sup \{h_\mu (\P): \P\ \text{is a finite Borel partition of}\ X\}.
$$
In particular, the Kolmogorov--Sinai entropy of $\mu$ does not depend on the choice of the F\o lner \sq\ in $G$, and
if $(\P_k)_{k\in\N}$ is a refining \sq\ of finite Borel partitions then
$$
h_\mu (X, G)=\lim_{k\rightarrow \infty} \uparrow h_\mu (\P_k).
$$
Sometimes $h_\mu (X, G)$ will also be denoted shortly by $h(\mu)$ (when no confusion occurs) and the function $\mu\mapsto h(\mu)$ on $\M (X, G)$ will be called the \emph{entropy function} of $(X, G)$.

Now consider two finite Borel partitions of $X$, $\P$ and $\Q$. In this context one defines the \emph{conditional Shannon entropy of $\P$ given $\Q$} (with respect to $\mu$) as
$$
H(\mu,\P|\Q) = H(\mu,\P\vee\Q)-H(\mu,\Q).
$$
The \emph{conditional dynamical entropy of $\P$ given $\Q$}, with respect to $\mu$ is defined analogously, as
$$
h_\mu (G, \P|\Q) = h_\mu(\P|\Q) = \lim_{n\rightarrow \infty} \frac1{|F_n|}H(\mu,\P^{F_n}|\Q^{F_n}) = h_\mu (\P\vee\Q)-h_\mu (\P).
$$
For conditional Shannon entropy subadditivity still holds:
$$
H(\mu,\P^{F_1\cup F_2}|\Q^{F_1\cup F_2})\le H(\mu,\P^{F_1}|\Q^{F_1})+H(\mu,\P^{F_2}|\Q^{F_2}),
$$
however the strong subadditivity in general fails.
Even so, the limit in defining the conditional dynamical entropy still can be replaced by the infimum over all finite sets $F\subset G$. In fact, if
$\Q^G$ denotes the smallest
	\inv\ $\sigma$-algebra generated by $\Q$, then
$$h_\mu (\P|\Q) = \lim_{n\rightarrow \infty} \frac1{|F_n|}H(\mu,\P^{F_n}|\Q^G)= \inf_F \frac1{|F|}H(\mu,\P^{F}|\Q^G)= \inf_F
	\frac1{|F|}H(\mu,\P^F|\Q^F),$$
where both infima are taken over all finite subsets $F\subset G$
(for the proofs see for example \cite{DooZ12, DooZ15, GTW, WZ}). As one of important consequences one has the following observation, which will be useful in our discussion about entropy structure for amenable group actions.

\begin{proposition}\label{dodado}
Let $\P$ and $\Q$ be both finite Borel partitions of $X$.
\begin{enumerate}

\item Then the function $\mu\mapsto h_\mu (\P|\Q)$ is affine on $\MGX$, and hence the entropy function of $(X, G)$ is also affine on $\MGX$.

\item Assume that the boundary of any atom of both $\P$ and $\Q$ has measure zero for every  $\mu\in\MGX$. Then the function $\mu\mapsto h_\mu (\P|\Q)$ is u.s.c.\ on $\MGX$.
\end{enumerate}
\end{proposition}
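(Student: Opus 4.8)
The plan is to treat the two assertions separately, reducing each to a property of the Shannon entropy of \emph{finite} partitions combined with the two presentations of the conditional dynamical entropy recorded just before the proposition. For part (1), I would first isolate the case of a single partition, namely that for every finite Borel partition $\R$ the map $\mu\mapsto h_\mu(\R)$ is affine on $\MGX$, and then obtain the conditional statement from the identity $h_\mu(\P|\Q)=h_\mu(\P\vee\Q)-h_\mu(\Q)$, which writes the conditional entropy as a difference of two such affine functions. To prove the single-partition case, fix $\mu=p\mu_1+(1-p)\mu_2$ with $\mu_1,\mu_2\in\MGX$ and $p\in[0,1]$, and apply to the finite partition $\R^{F_n}$ the two-sided concavity estimate for Shannon entropy: concavity gives $H(\mu,\R^{F_n})\ge pH(\mu_1,\R^{F_n})+(1-p)H(\mu_2,\R^{F_n})$, while the concavity defect is bounded by the entropy of the weights, $H(\mu,\R^{F_n})\le pH(\mu_1,\R^{F_n})+(1-p)H(\mu_2,\R^{F_n})-p\log p-(1-p)\log(1-p)$. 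Dividing by $|F_n|$ and letting $n\to\infty$ along the F\o lner \sq, the additive error is at most $(\log 2)/|F_n|$ and hence vanishes (here one uses $|F_n|\to\infty$, which holds since $G$ is infinite), so both bounds collapse to $h_\mu(\R)=ph_{\mu_1}(\R)+(1-p)h_{\mu_2}(\R)$. The ``hence'' clause follows because the entropy function equals $\lim_k\uparrow h_\mu(\P_k)$ along a refining \sq\ of finite partitions, and a pointwise increasing limit of affine functions is again affine.

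For part (2), I would start from the presentation $h_\mu(\P|\Q)=\inf_F\frac1{|F|}H(\mu,\P^F|\Q^F)$, where the infimum runs over finite $F\subset G$ and every $\Q^F$ is a \emph{finite} partition; choosing this form, rather than the version conditioned on the invariant $\sigma$-algebra $\Q^G$, is what keeps the whole argument at the level of continuous finite-partition Shannon entropies. The first step is to lift the small-boundary hypothesis to the joins: since $\partial(g^{-1}P)=g^{-1}(\partial P)$ and $\mu$ is invariant, $\mu(\partial(g^{-1}P))=\mu(\partial P)=0$, and the boundary of a finite intersection is contained in the union of the boundaries of its factors, so every atom of $\P^F$, of $\Q^F$, and of $\P^F\vee\Q^F$ has $\mu$-null boundary for every $\mu\in\MGX$. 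The second step is that for a finite partition $\R$ all of whose atoms are $\mu$-continuity sets for every $\mu\in\MGX$, the map $\mu\mapsto H(\mu,\R)=-\sum_{R\in\R}\mu(R)\log\mu(R)$ is continuous on $\MGX$: by the Portmanteau theorem $\mu\mapsto\mu(R)$ is continuous at each such $\mu$, and $t\mapsto-t\log t$ is continuous on $[0,1]$. Consequently each $\mu\mapsto\frac1{|F|}H(\mu,\P^F|\Q^F)=\frac1{|F|}\bigl(H(\mu,\P^F\vee\Q^F)-H(\mu,\Q^F)\bigr)$ is continuous, and $h_\mu(\P|\Q)$, being an infimum of continuous (in particular u.s.c.) functions, is u.s.c.; indeed an infimum of u.s.c.\ functions is u.s.c., since its sublevel set $\{g<r\}=\bigcup_F\{\frac1{|F|}H(\cdot,\P^F|\Q^F)<r\}$ is a union of open sets.

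The genuinely load-bearing points, as opposed to the routine Portmanteau continuity and passage to the limit, are two. For part (1) it is the observation that the concavity defect of the Shannon entropy is controlled by the \emph{bounded} entropy of the mixing weights and therefore disappears after normalizing by $|F_n|$; this is what upgrades the easy concavity of $h_\mu(\R)$ to genuine affineness, and I expect it to be the main obstacle. For part (2) the delicate point is the choice of the finite-partition formula $\Q^F$ together with the lifting of the small-boundary property through $g^{-1}$ via invariance of $\mu$, which is precisely what makes the conditional entropy a difference of continuous functions rather than merely u.s.c.; once this is in place the conclusion is immediate from the infimum representation.
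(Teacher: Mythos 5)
Your proof is correct, and it is exactly the standard argument the paper has in mind: the survey states Proposition \ref{dodado} without proof, but it records immediately beforehand precisely the two ingredients you use, namely the identity $h_\mu(\P|\Q)=h_\mu(\P\vee\Q)-h_\mu(\Q)$ together with the bounded concavity defect of Shannon entropy for part (1), and the finite-partition infimum formula $h_\mu(\P|\Q)=\inf_F\frac1{|F|}H(\mu,\P^F|\Q^F)$ combined with Portmanteau continuity for null-boundary atoms for part (2). (Note that you correctly use $h_\mu(\P\vee\Q)-h_\mu(\Q)$; the paper's displayed formula contains a typo reading $-h_\mu(\P)$.)
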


Since \tl\ entropy plays in this survey only a marginal role, we reduce its presentation to a necessary minimum.
We are reluctant to present a lengthy original definition, alternatively,
 we use the following understanding of \tl\ entropy (via the variational principle for amenable group actions \cite{MP82, ST80}).

\begin{definition}
The \emph{\tl\ entropy} of the action $(X,G)$ equals
$$
\htop(X,G)=\sup_{\mu\in\MGX}h_\mu(X, G).
$$
\end{definition}

We can introduce the notion of a principal extension for amenable group actions following Ledrappier's idea \cite{led}. Let $\pi:Y\to X$ be an extension from a $G$-action $(Y, G)$ to another $G$-action $(X, G)$. Then, for every $\nu\in\MGY$, one can define the \emph{conditional entropy of $\nu$ given $X$}, as
$$
h_\nu (G,Y| X)=h_\nu (Y| X)= \sup_\Q \inf_\P h_\nu (G, \Q| \pi^{- 1} (\P)),
$$
where $\Q$ ranges over all finite Borel partitions of $Y$, while $\P$ ranges over all finite Borel partitions of $X$.
We say that $\pi$ is a \emph{principal extension} of $X$, if $h_\nu (Y|X)=0$ for every $\nu\in\MGY$. In this case, $Y$ is also called a principal extension of $X$. By direct calculations it is not hard to obtain that if $h_\mu (X,G)<\infty$, where $\mu= \pi(\nu)\in\MGX$, then $h_\nu (Y|X)$ is simply the difference $h_\nu (Y, G)-h_\mu (X, G)$. Thus the definition of a principal extension introduced here coincides with Definition \ref{prin} in the setting of $\mathbb{Z}$-actions.

Entropy structure of an amenable group action \xg\ is defined by exactly the same two steps as it was done in the case of $\Z$-actions.  This is possible
thanks to a deep result by Huczek, \cite[Theorem 2]{H}, which assures that any $G$-action admits a \zd\ principal extension. We also remark that for amenable group actions on \zd\ spaces a result analogous to Proposition \ref{202002121502} holds. However, it is not known whether a result about the small boundary property similar to that in \cite{L99, LW00} holds for amenable group actions, even though the small boundary property has been studied for $\mathbb{Z}^k$-actions in \cite{GLT} and for amenable group actions in \cite{KS, LT} (but we are not going to need this).

By the same proof as for $\Z$-actions one derives, using Proposition \ref{dodado}, the following result:

\begin{theorem} \label{202002161830}
Any $G$-action with finite topological entropy admits an affine \ens\ with u.s.c.\ differences.
\end{theorem}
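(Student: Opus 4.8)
The plan is to run the proof of Theorem~\ref{202002121830} essentially verbatim, isolating the only two places at which the acting group intervenes and substituting their amenable counterparts; the remainder of the argument is purely functional-analytic and transfers without change. The two group-dependent inputs are the existence of a \zd\ principal extension --- for $\mathbb Z$ this is Proposition~\ref{principal}, while for a general countable amenable $G$ it is Huczek's theorem \cite{H} --- and the affinity together with the upper semicontinuity of (conditional) entropy, which is supplied by Proposition~\ref{dodado}.

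First I would reduce to the \zd\ case. Fix a \zd\ principal extension $\pi\colon X'\to X$ provided by \cite{H}. Since $X'$ is \zd, choose a refining \sq\ $(\P_k)_{k\in\N}$ of finite clopen partitions and put $h_k'(\mu)=h_\mu(G,\P_{[1,k]})$ for $\mu\in\MGXP$, where $\P_{[1,k]}$ is the join of $\P_1,\dots,\P_k$. By Proposition~\ref{dodado}(1) each $h_k'$ is affine, and the differences $h_{k+1}'-h_k'=h_\mu(\P_{k+1}\mid\P_{[1,k]})$ are \usc\ by Proposition~\ref{dodado}(2), since the atoms of clopen partitions have empty, hence $\mu$-null, boundary for every $\mu$. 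By the amenable analogue of Proposition~\ref{202002121502} the \sq\ $\H'=(h_k')_{k\ge0}$ is an \ens\ of $X'$; thus $\H'$ is an affine \ens\ of $X'$ with \usc\ differences.

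The remaining task is to transport this structure down to $X$, and here lies the actual content. The extension $\pi$ induces a continuous affine surjection $\MGXP\to\MGX$ between metrizable Choquet simplices which, by principality, sends extreme points to extreme points; moreover principality forces the limit function $\lim_k h_k'$ to equal $\mu\mapsto h_\mu(X')=h_{\pi\mu}(X)$, which is constant on the fibres of $\pi$. The natural candidate on $X$ is the push-down $\H=\bigl((h_k')^{\pi}\bigr)_{k\ge0}$: by Proposition~\ref{pushdown}(2) each level $(h_k')^{\pi}$ is again affine and \usc, so $\H$ is at least an affine structure on $\MGX$ whose limit is the (affine, by Proposition~\ref{dodado}(1)) entropy function of $X$.

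The hard part is twofold and is exactly where principality must be exploited. First, one must verify that $\H$ is genuinely an \ens\ of $X$ in the sense of Definition~\ref{ensf}, i.e.\ that the lift $\bigl((h_k')^{\pi}\circ\pi\bigr)_k$ is uniformly equivalent to $\H'$; the inequality $(h_k')^{\pi}\circ\pi\ge h_k'$ gives one direction for free, but the reverse direction does not follow formally, because pushing down inflates each level along the fibres, and one controls this inflation only by using that the common top $\lim_k h_k'$ is fibre-constant, combined with the exchange Proposition~\ref{exchanging}. Second, one must check that the differences of $\H$ remain \usc; this is delicate precisely because pushing down does not commute with taking differences (a difference of two \usc\ push-downs need not be \usc), and it is resolved by the same bookkeeping as in the $\mathbb Z$-case, again leaning on principality and on the superenvelope characterisation of Proposition~\ref{fact2}(3). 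I expect this transfer step --- establishing uniform equivalence and \usc\ differences simultaneously --- to be the main obstacle; all of it parallels \cite{D0,D1} once Proposition~\ref{dodado} is in hand.
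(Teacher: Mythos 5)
Your proposal follows exactly the route the paper takes: it reduces to the zero-dimensional case via Huczek's principal extension theorem \cite{H}, obtains affinity and u.s.c.\ differences of the clopen-partition structure from Proposition~\ref{dodado}, and then transfers the structure down to $X$ by the same bookkeeping as in the $\Z$-action proof of Theorem~\ref{202002121830} --- which is precisely all the paper itself says (``by the same proof as for $\Z$-actions, using Proposition~\ref{dodado}''). You correctly isolate the only two group-dependent inputs and you are right that the push-down/uniform-equivalence step is where the real work from \cite{D0,D1} is being reused, so the proposal matches the paper's argument in both substance and level of detail.
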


\section{Quasitilings, tilings and tiling systems of amenable groups} \label{prepa}

For a long time the standard machinery of quasitilings developed by Ornstein and Weiss \cite{OW} has played a crucial role in ergodic and entropy theories of amenable group actions. But \qt s turn out to be not suited for building symbolic extensions. The part of the group that is not covered by the quasi-tiles even though it is a very small set (of small Banach density) presents a serious obstacle in the construction. In fact, as explained in \cite{DZ}, one needs more precise tilings, which cover all of the group. During the long process of building up the foundations for the symbolic extension theory of amenable group actions, it was proved in \cite{DHZ} that the Ornstein--Weiss \qt s can be improved to become tilings.

In this section we will recall results from \cite{DH, DHZ, DZ} about quasitilings, tilings and tiling systems, all of which will play fundamental roles in the construction of symbolic extensions (in fact quasi-symbolic extensions) for amenable group actions.

\medskip

\begin{definition}
A \emph{\qt} of $G$ is a countable family $\CT$ of finite sets $T\subset G$, called the \emph{tiles}, together with a map from $\CT$ to $G$ assigning to each tile $T$ a point $c_T\in T$ called the \emph{center} of $T$. We assume that the above map is injective, i.e.\ that different tiles have different centers. The image of this injection, i.e.\  the set $C(\CT)=\{c_T:T\in\CT\}$ will be referred to as the \emph{set of centers} of $\CT$.
For each tile $T$, the set $S_T=Tc_T^{-1}$ will be called the \emph{shape} of $T$ (we remark that every shape contains the unity $e$). The \emph{collection of shapes} $\{S_T:T\in\CT\}$ will be denoted by $\CS(\CT)$. Given $S\in\CS(\CT)$, the set $C_S=C_S(\CT)=\{c_T\in C(\CT):S_T=S\}$ will be called the \emph{set of centers for the shape $S$}.
\end{definition}

 Note that the sets of centers for different shapes of a \qt\ $\CT$
 are disjoint and their union over all shapes equals $C(\CT)$. A \qt\ $\CT$ is \emph{proper} if the collection of shapes $\CS(\CT)$ is finite. From now on, \emph{by a \qt\ we shall always mean a proper \qt.}

\begin{definition}\label{qt} Let $K\subset G$ be a finite set and fix an $\eps\in[0,1)$. A \qt\ $\CT$ is called
\begin{enumerate}

	\item \emph{$(K,\eps)$-invariant} if all shapes $S\in \CS(\CT)$ are \emph{$(K,\eps)$-invariant}.

	\item \emph{$\eps$-disjoint} if there exists a mapping $T\mapsto T^\circ$ (\,$T\in\CT$) such that,
every $T^\circ$ is a $(1\!-\!\eps)$-subset of $T$ and
the family $\{T^\circ:T\in\CT\}$ is disjoint.

    \item \emph{disjoint} if the tiles of $\CT$ are pairwise disjoint.

	\item \emph{$(1- \eps)$-covering} if $\underline D(\bigcup\CT)\ge 1- \eps$, where
the \emph{lower Banach density} of a set $A\subset G$ is defined by the formula
$$\underline D(A)= \sup \left\{\inf_{g\in G} \frac{|A\cap F g|}{|F|}: F\subset G\ \text{is finite}\right\}.$$

	\item a \emph{tiling} if it is a partition of $G$.
\end{enumerate}
By an \emph{$\varepsilon$-\qt} we shall mean a \qt\ which is both $\varepsilon$-disjoint and $(1-\varepsilon)$-covering.
\end{definition}

\emph{From now on in the group $G$ we fix a symmetric, centered F\o lner \sq\ $(F_n)_{n\in\N}$.}

The existence of  \qt s with additional properties (listed below) is proved in \cite[Lemma 4.1]{DHZ}. The proof is essentially the same as in \cite{OW}, the differences result from using the notion of lower Banach density for the covering parameter of a \qt.

\begin{proposition}\label{OW}
For any $\varepsilon>0$ there exists $r(\varepsilon)\in \N$ such that for any $n\in \N$ there exists
an $\varepsilon$-\qt\ $\CT$ of $G$ with $\CS(\CT)\subset\{F_{n_1}, F_{n_2}, \dots, F_{n_{r(\varepsilon)}}\}$, where $n< n_1<n_2<\cdots<n_{r(\varepsilon)}$.
\end{proposition}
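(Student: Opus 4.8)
The plan is to reproduce the greedy/iterative construction of Ornstein and Weiss, taking care to phrase the covering gain in terms of lower Banach density so that it holds uniformly across $G$. First I would fix the number of iterations. The geometric heart of the argument is a covering lemma guaranteeing that a single pass of the greedy procedure, using a sufficiently invariant shape, increases the covered portion by a definite fraction $\beta=\beta(\eps)>0$ that does not depend on which F\o lner sets are used (only on $\eps$), provided those sets are invariant enough. Granting this, I set $r=r(\eps)$ to be the least integer with $(1-\beta)^{r}<\eps$; this $r$ depends on $\eps$ alone, which is precisely the content of the assertion that the number of shapes is bounded uniformly in $n$.

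Next I would choose the shapes. Given the starting index $n$, I use the F\o lner property to select indices $n<n_1<n_2<\cdots<n_r$ so that each $F_{n_{i+1}}$ is as $(K,\delta)$-invariant as required relative to the finitely many sets $F_{n_1},\dots,F_{n_i}$ already chosen; since $(F_n)_{n\in\N}$ is a F\o lner sequence, arbitrarily invariant sets with arbitrarily large indices exist, so this selection is always possible. These $F_{n_1},\dots,F_{n_r}$ will be the only admissible shapes, so that $\CS(\CT)\subset\{F_{n_1},\dots,F_{n_r}\}$ holds automatically.

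Then comes the iteration, carried out in $r$ rounds from the largest shape to the smallest. At round $j$ (using shape $F_{n_{r-j+1}}$), let $U$ denote the currently uncovered part of $G$; I choose a maximal family of translates of the current shape whose centers lie in $U$ and which are $\eps$-disjoint from all tiles placed so far (realized by fixed $(1-\eps)$-subsets $S^\circ$). Maximality forces every further candidate translate to overlap the already-covered region substantially, while invariance of the shape confines the boundary effects; combining these yields the covering-gain lemma, namely that the newly covered region captures at least a $\beta$-fraction of $U$. Because we only ever adjoin translates that are $\eps$-disjoint from the existing family, $\eps$-disjointness of the whole $\CT$ is preserved throughout. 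After $r$ rounds the uncovered set has shrunk geometrically below $\eps$, giving the $(1-\eps)$-covering property, and together with $\eps$-disjointness this makes $\CT$ an $\eps$-\qt.

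The main obstacle --- and the only genuine departure from the classical argument --- is that ``covering a $\beta$-fraction'' must be read in the sense of lower Banach density, uniformly in space, rather than as a fraction inside one fixed F\o lner set. Via the duality $\underline D(\bigcup\CT)=1-\overline D(G\setminus\bigcup\CT)$, the target $\underline D(\bigcup\CT)\ge 1-\eps$ is equivalent to exhibiting a single window $F$ for which every translate $Fg$ meets the uncovered set in at most an $\eps$-fraction. The covering lemma must therefore be upgraded to a uniform statement: the greedy pass should reduce the uncovered fraction by a factor $(1-\beta)$ simultaneously over all translates $Fg$ of a controlling window, not merely on average. Establishing this uniform, Banach-density version of the Ornstein--Weiss covering estimate (controlling boundary contributions uniformly and propagating the window through the $r$ rounds) is the delicate technical point; once it is in place, the geometric decay and the bound $r(\eps)$ follow exactly as sketched above.
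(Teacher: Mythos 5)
Your proposal follows essentially the same route as the paper, which gives no proof of its own but cites \cite[Lemma 4.1]{DHZ} and describes that proof as the classical Ornstein--Weiss iteration modified only in that the covering parameter is measured by lower Banach density. Your sketch matches this exactly --- greedy maximal $\eps$-disjoint selection over $r(\eps)$ rounds with a per-round covering gain $\beta(\eps)$, plus the correctly identified delicate point that the gain must be established uniformly over all translates of a controlling window so that it survives passage to $\underline D$.
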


In the sequel we shall use the notion of a dynamical \qt. For better differentiation of the notions, the \qt s defined so far will be referred to as \emph{static}. A static \qt\ $\CT$ can be identified as a point of the symbolic space ${\rm V}^G$ where ${\rm V}=
\{``S\,":S\in\CS(\CT)\}\cup\{0\}$. Namely, for each $S\in\CS(\CT)$ we place the symbol $``S"$ at all the centers $c\in C_S$, and we place the symbol $``0"$ at all remaining positions. Formally, we can write $\CT=\{\CT_g:g\in G\}$, where
$$
\CT_g=\begin{cases}``S"\,;& g\in C_S, S\in\CS(\CT),\\
``0";& g\notin C(\CT).
\end{cases}
$$

\begin{definition} By a \emph{dynamical \qt} with the finite collection of shapes $\CS$ we will understand any subshift $\T\subset {\rm V}^G$, where ${\rm V}=\{``S\,":S\in\CS\}\cup\{``0"\}$ (the elements of $\T$ are interpreted as static \qt s $\CT$ with $\CS(\CT)\subset\CS$). The set $\CS$ will be also denoted as $\CS(\T)$. We will say that the dynamical \qt\ $\T$ is \emph{$(K,\varepsilon)$-\inv}, \emph{$\varepsilon$-disjoint}, \emph{disjoint}, \emph{$(1- \varepsilon)$-covering}, a \emph{dynamical $\varepsilon$-\qt} or a \emph{dynamical tiling} if all its elements are $(K,\varepsilon)$-\inv, $\varepsilon$-disjoint, disjoint, $(1- \varepsilon)$-covering, $\varepsilon$-\qt s or tilings, respectively.
\end{definition}

By combining the results of \cite{DH, DHZ}, we have at our disposal the following general fact:

\begin{proposition}\label{ext}
For any $\varepsilon>0$ and any finite set $K\subset G$, there exists a $(K,\varepsilon)$-\inv\ dynamical tiling $\T$ of $G$ with zero topological entropy.
\end{proposition}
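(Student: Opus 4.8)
The plan is to prove Proposition~\ref{ext} by first producing, for the prescribed invariance parameters $(K,\varepsilon)$, a \emph{dynamical $\varepsilon$-\qt} with good invariance and low entropy, and then upgrading $\varepsilon$-disjoint coverings to genuine tilings (partitions) using the congruent-tiling construction of \cite{DHZ}, all while keeping the topological entropy at zero. The two inputs I would lean on are Proposition~\ref{OW} (existence of $\varepsilon$-\qt s with shapes among finitely many F\o lner sets $F_{n_1},\dots,F_{n_{r(\varepsilon)}}$) and the main theorem of \cite{DHZ} which asserts that the Ornstein--Weiss quasitilings can be improved to exact tilings. The entropy bookkeeping will be handled by realizing the tiling as a factor of, or a point in, a symbolic system built over an odometer-type zero-entropy base.

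First I would fix $(K,\varepsilon)$ and, by Proposition~\ref{OW}, obtain a collection of admissible shapes $\CS = \{F_{n_1},\dots,F_{n_{r(\varepsilon)}}\}$ all of which are $(K,\varepsilon)$-invariant; here I am free to take the $F_{n_i}$ with $n$ large enough that each is $(K,\varepsilon/2)$-invariant, since a F\o lner \sq\ is eventually $(K,\delta)$-invariant for every $\delta$. This guarantees condition~(1) of Definition~\ref{qt} for any \qt\ whose shapes lie in $\CS$. Next I would invoke the tiling-existence result of \cite{DHZ}: the $\varepsilon$-disjoint $(1-\varepsilon)$-covering \qt s supplied by the Ornstein--Weiss machinery can be refined to \emph{exact tilings} (partitions of $G$) whose shapes are still controlled by, or are small modifications of, the F\o lner sets in $\CS$, so that the $(K,\varepsilon)$-invariance of shapes is preserved. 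The point is that passing from an $\varepsilon$-covering to a full partition only enlarges the covered set to all of $G$ and, in the \cite{DHZ} construction, this is achieved without destroying the invariance of the shapes.

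The crucial step is \textbf{making the tiling dynamical and entropy-zero simultaneously}. To this end I would assemble the tilings into a subshift $\T\subset {\rm V}^G$ with ${\rm V}=\{``S":S\in\CS\}\cup\{``0"\}$ (as in the definition of a dynamical \qt) in such a way that $\T$ is shift-invariant, closed, every element of $\T$ is a tiling with shapes in $\CS$, and $\htop(\T,G)=0$. The natural device is to drive the placement of centers by a zero-entropy deterministic process: one builds a hierarchy of congruent tilings indexed by an increasing parameter and codes the center pattern through a $G$-odometer (or a profinite/subgroup-coset structure when $G$ is residually finite), so that each point of $\T$ is determined by a point of a zero-entropy base system. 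Because the tiling pattern is a continuous factor of a zero-entropy system, the variational-principle definition $\htop(\T,G)=\sup_\mu h_\mu$ forces $\htop(\T,G)=0$: every invariant measure on $\T$ is pushed from the base and hence has zero entropy.

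I expect the \textbf{main obstacle} to be precisely the reconciliation of three competing demands: exactness of the tiling (full partition, not just $\varepsilon$-covering), prescribed $(K,\varepsilon)$-invariance of all shapes, and \emph{zero} topological entropy of the resulting subshift. Each demand is attainable in isolation --- Proposition~\ref{OW} gives invariance and $\varepsilon$-covering, \cite{DHZ} gives exact tilings, and odometer codings give zero entropy --- but combining them requires that the inductive tiling construction of \cite{DHZ} be carried out so that the ``correction'' turning an $\varepsilon$-covering into a partition is itself determined by a zero-entropy rule and does not introduce shapes violating the invariance. Concretely, I would need the congruent, hierarchically-nested tiling structure of \cite{DHZ} in which higher-level tiles are exactly unions of lower-level tiles; this rigidity is what both yields an exact partition and keeps the symbolic complexity subexponential, giving the entropy-zero conclusion. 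The verification that this combined construction indeed produces a $(K,\varepsilon)$-invariant dynamical tiling of entropy zero is where the real content of \cite{DH, DHZ} is used, and I would cite those results rather than reprove the quasitiling hierarchy from scratch.
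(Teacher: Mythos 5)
Your proposal matches the paper's treatment: the survey offers no proof of Proposition~\ref{ext} beyond the phrase ``by combining the results of \cite{DH, DHZ}'', and your outline is a faithful reconstruction of exactly that combination --- Ornstein--Weiss shapes with prescribed $(K,\varepsilon)$-invariance via Proposition~\ref{OW}, the upgrade from $\varepsilon$-quasitilings to exact tilings in \cite{DHZ}, and the deterministic congruent hierarchy to force zero entropy. The one cosmetic caveat is that a general countable amenable $G$ need not admit a $G$-odometer, so in \cite{DHZ} the zero-entropy bookkeeping is carried out directly through the deterministic hierarchy (the ``more sophisticated method'' alluded to before Theorem~\ref{fs}) rather than by factoring through an odometer; since you explicitly defer to \cite{DH, DHZ} for that step, this does not create a gap.
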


However, one dynamical \qt\ (or tiling) is insufficient for later construction of a quasi-symbolic extension. What we need is a countable joining of a \sq\ of dynamical \qt s (tilings), with improving disjointness, covering and invariance properties.

By a \emph{\tl\ joining} of finitely or countably many systems $X_k$ ($k\in K$ where $K=\{1,2,\dots,l\}$ with $l\in\N$, or $K=\N$) we will mean any closed subset $Z$ of the Cartesian product $\prod_{k\in K}X_k$, which is invariant under the product (coordinatewise) action, and whose projection on every coordinate is surjective. Such a joining will be sometimes denoted by $\bigvee_{k\in K}X_k$ (although this notation is ambiguous, as there may exist many joinings of the same collection of systems). At least one joining always exists---the product joining.

\begin{definition}
Let $(\eps_k)_{k\in\N}$ be a decreasing to $0$ \sq\ of positive numbers. Let $\mathbf T=\bigvee_{k\in\N}\T_k$ be a \tl\ joining of a \sq\ of dynamical \qt s of $G$, such that for every $k\in\N$, $\T_k$ is a (dynamical) $\eps_k$-\qt, and for every $\varepsilon>0$ and finite set $K\subset G$, for $k$ sufficiently large all shapes of $\T_k$ are $(K,\varepsilon)$-\inv. Such $\mathbf T$ will be called a \emph{F\o lner system of \qt s}.\footnote{The term ``F\o lner system of \qt s'' comes from the fact that it is a \tl\ action and from the
observation that the last requirement in the definition can be formulated differently: the joint collection $\bigcup_{k\in\N}\CS_k$, indexed (bijectively, but in an arbitrary order) by natural numbers, is a F\o lner \sq\ in $G$.} The elements of $\mathbf T$ will be denoted by $\boldsymbol\CT=(\CT_k)_{k\in\N}$ ($\forall_{k\in\N}\,\CT_k\in\T_k$). The collection of shapes of $\T_k$, $\CS(\T_k)$, will be abbreviated as $\CS_k$.
\end{definition}

The existence of a F\o lner system of \qt s for the group $G$ follows directly from Proposition \ref{OW}. We remark that in fact one can find such a system as a \tl\ factor of any given \zd\ free $G$-action \cite{DZ}. We can deduce similarly the existence of F\o lner systems for tilings by using alternatively Proposition \ref{ext}, but we cannot claim that given a free \zd\ $G$-action \xg, a F\o lner system of tilings (or even one dynamical tiling) appears as a \tl\ factor of \xg.

In case of a F\o lner system of tilings we can further demand the members of the joining to ``interact'' with each other in a more specific manner, which is crucial in our later construction.

Two key such interactions are congruency and determinism, as defined below:

\begin{definition}
Let $\mathbf T=\bigvee_{k\in\N}\T_k$ be a F\o lner system of tilings. We say that $\mathbf T$ is
\begin{enumerate}

	\item \emph{congruent} if $\forall \boldsymbol{\CT}=(\CT_k)_{k\in\N}\in\mathbf T$ every tile of $\CT_{k+1}$  is a union of some tiles of~$\CT_k$ ($k\in\N$).

	\item \emph{deterministic}\footnote{In fact any congruent F\o lner system of tilings $\mathbf T=(\T_k)_{k\in\N}$ can be easily made deterministic in an inductive process of duplicating the shapes, for details see \cite{DH}.} if it is congruent, and additionally, for each $k\in\N$ and every shape $S'\in\CS_{k+1}$,
	there exist sets $C_S(S')\subset S'$ ($S\in\CS_k$) such that
	$$
	S'= \bigsqcup_{S\in\CS_k}\ \bigsqcup_{c\in C_S(S')} Sc \text{\ \ (which is a disjoint union),}
	$$
	and for each $\boldsymbol{\CT}=(\CT_k)_{k\in\N}\in\mathbf T$, whenever $S'c'$ is
	a tile of $\CT_{k+1}$ then the sets $Scc'$ with $S\in\CS_k$ and $c\in C_S(S')$ are tiles
	of $\CT_k$. We also define $C_k(S')=\bigcup_{S\in\CS_k}C_S(S')$.

\end{enumerate}
\end{definition}

In the deterministic case, each static tiling $\CT_{k+1}$ \emph{determines} the tiling
$\mathcal T_k$ joined with it, because each of the tiles of $\CT_{k+1}$ is partitioned into the tiles of $\CT_k$ in a unique way determined by its shape. In fact in such case the joining $\mathbf T$ is an \emph{inverse limit}\footnote{We assume that $(X_k)_{k\in\N}$ is a \sq\ of systems such that, for each $k\in\N$, $X_k$ is a factor of $X_{k+1}$ via a map $\pi_k$. Then the \emph{inverse limit} of the \sq\ $(X_k)_{k\in\N}$ is defined as
$\overset\leftarrow{\lim\limits_k}\, X_k=\{(x_k)_{k\in\N}: \forall_{k\in\N}\ x_k\in X_k \text{ and }x_k=\pi_k(x_{k+1})\}$, where the action is defined coordinatewise.} denoted by
$\mathbf T=\overset\leftarrow{\lim\limits_k}\T_k$.
\begin{definition}
A deterministic F\o lner system of tilings will be shortly called a \emph{tiling system}.
\end{definition}

The following result is \cite[Theorem 5.2]{DHZ} translated to the terminology introduced above. We remark that congruency has been required explicitly in \cite{DHZ}, while determinism is implicit in the proof. In order to obtain \tl\ entropy zero, the technique of turning a congruent system of tilings into a deterministic one by duplicating the shapes, is insufficient. In the proof, determinism is obtained by a more sophisticated method.

\begin{theorem}\label{fs}
For any infinitely countable amenable group $G$ there exists a tiling system (i.e.\ a deterministic F\o lner system of tilings) of $G$ with zero \tl\ entropy.
\end{theorem}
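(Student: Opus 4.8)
The plan is to build the tiling system as an inverse limit of dynamical tilings of improving invariance, drawing the zero-entropy input at each scale from Proposition~\ref{ext} and using Proposition~\ref{OW} as the grouping engine, and to reduce the entropy bound to a per-level statement. The key reduction is this: determinism is exactly the property that makes each static $\CT_{k+1}$ determine the $\CT_k$ joined with it, so each $\T_k$ is a \tl\ factor of $\T_{k+1}$, the joining $\mathbf T=\overset\leftarrow{\lim\limits_k}\T_k$ is the inverse limit of the tower $\T_1\leftarrow\T_2\leftarrow\cdots$, and its topological entropy equals $\sup_k\htop(\T_k)$ by the standard fact that the entropy of an inverse limit is the supremum of the entropies of its factors. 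Hence it suffices to construct a congruent, deterministic Følner system of tilings in which every individual level $\T_k$ has zero topological entropy. Determinism is indispensable already here: under congruency alone the coarse tiling does not recover the fine one, and the entropy of the joining need not reduce to a supremum over levels.

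Next comes the inductive construction. Fix $\eps_k\downarrow 0$ and finite sets $K_1\subset K_2\subset\cdots$ with $\bigcup_kK_k=G$. For the base case, Proposition~\ref{ext} yields a $(K_1,\eps_1)$-invariant dynamical tiling $\T_1$ of zero entropy. Assume a zero-entropy $\T_k$ with finite shape collection $\CS_k$ has been built. I would invoke Proposition~\ref{ext} once more, at a scale whose shapes are $(K_{k+1},\eps_{k+1})$-invariant and large relative to $\CS_k$, to obtain a zero-entropy dynamical tiling $\T'$; the joining $\T_k\vee\T'$ again has zero entropy, since $\htop(\T_k\vee\T')\le\htop(\T_k)+\htop(\T')=0$. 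Working inside $\T_k\vee\T'$ I would define $\T_{k+1}$ by snapping each tile of $\T'$ to the union of the $\T_k$-tiles centered inside it and reassigning the finitely many $\T_k$-tiles that straddle a $\T'$-boundary (say, to the super-tile containing their center). This is a sliding-block, hence equivariant, operation whose output is a genuine tiling all of whose tiles are unions of $\T_k$-tiles; because the boundary corrections live in a bounded collar and $\CS_k$, $\CS(\T')$ are finite, the resulting collection $\CS_{k+1}$ is finite and its shapes stay $(K_{k+1},\eps_{k+1})$-invariant. Congruency is thus built in, and $\T_{k+1}$, being a \tl\ factor of the zero-entropy system $\T_k\vee\T'$, inherits zero entropy.

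The main obstacle is to secure genuine \emph{determinism} together with zero entropy, i.e.\ to fix a single decomposition $S'=\bigsqcup_{S\in\CS_k}\bigsqcup_{c\in C_S(S')}Sc$ for each shape $S'\in\CS_{k+1}$, valid at \emph{every} occurrence of $S'$ throughout $G$ and compatibly across all levels. The snapping above produces, a priori, a decomposition depending on the ambient $\T_k$-configuration, so the same snapped shape may be cut differently at different locations. The tempting remedy---recording each occurring internal partition by splitting $S'$ into one new shape per pattern (shape duplication, as in \cite{DH})---does not help here: it merely relabels the freedom already present and leaves the entropy unchanged, so it cannot convert a construction carrying positive internal-partition freedom into a zero-entropy one. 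Determinism must therefore be forced during the construction: the coarse grouping and the reassignment of straddling tiles have to be chosen so that congruent tiles of a common shape are subdivided identically, using only the shape and the fixed, zero-entropy data already set at lower levels, never the position. Achieving this global consistency while keeping the shape collections finite, the invariance improving, and all lower congruencies intact over infinitely many levels is the technically delicate heart of the argument---the ``more sophisticated method'' alluded to after the statement.

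Finally, once determinism is in force, the shapes of $\T_{k+1}$ are $(K_{k+1},\eps_{k+1})$-invariant with $K_{k+1}\uparrow G$ and $\eps_{k+1}\downarrow 0$, so $\bigcup_k\CS_k$ is a Følner \sq\ in $G$ and $\mathbf T=\overset\leftarrow{\lim\limits_k}\T_k$ is a tiling system. Since every level $\T_k$ has zero topological entropy, the reduction of the first step gives $\htop(\mathbf T)=\sup_k\htop(\T_k)=0$, which proves Theorem~\ref{fs}.
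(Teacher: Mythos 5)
There is a genuine gap, and you have in effect flagged it yourself: the one step that carries the mathematical content of Theorem~\ref{fs} --- producing determinism \emph{simultaneously} with zero topological entropy --- is described as ``the technically delicate heart of the argument'' and then not carried out. Everything you do prove (the reduction of the entropy of a deterministic inverse limit to $\sup_k\htop(\T_k)$, the inductive snapping of a finer zero-entropy tiling from Proposition~\ref{ext} onto the previous level, the finiteness of the resulting shape collections and the preservation of invariance) is the routine scaffolding; the paper itself gives no proof of the theorem but defers to \cite[Theorem 5.2]{DHZ} and explicitly warns that the obvious route to determinism (shape duplication) ``is insufficient'' for entropy zero and that ``determinism is obtained by a more sophisticated method.'' A proposal that reproduces the scaffolding and names the hard step without supplying it is an outline, not a proof.

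There is also an unresolved tension inside your own write-up that you should confront. Your snapping map is a sliding-block code, so each partial joining $\T_1\vee\cdots\vee\T_k$ is a topological factor of the product of the independent zero-entropy seeds $\T'_1\times\cdots\times\T'_k$, and hence the \emph{entire} congruent joining $\bigvee_k\T_k$ already has zero entropy --- not merely each level. If that is right, then the ``internal-partition freedom'' you worry about carries no entropy, and the duplication trick of \cite{DH} (which produces a deterministic system conjugate to the congruent joining, modulo the bookkeeping needed to make duplicated shapes genuinely distinct subsets of $G$) would finish the proof --- contradicting your dismissal of it and the paper's remark that duplication does not suffice. Conversely, if duplication genuinely fails for your construction, then positive decomposition entropy must be hiding somewhere in it (for instance in the choice of joining $\T_k\vee\T'$, or in a circularity if Proposition~\ref{ext} is itself a consequence of the system-level construction in \cite{DHZ} rather than an independent input). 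You need to decide which horn you are on: either make the zero-entropy-of-the-joining claim explicit and justify that duplication then yields a bona fide deterministic F\o lner system of tilings within the shapes-as-subsets formalism, or else identify where entropy enters and replace the hand-wave ``determinism must be forced during the construction'' with an actual mechanism that fixes, for every shape $S'\in\CS_{k+1}$, a single decomposition $S'=\bigsqcup_{S\in\CS_k}\bigsqcup_{c\in C_S(S')}Sc$ valid at every occurrence. As it stands, neither is done.
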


An important element of the construction of symbolic extensions for amenable group actions (as well as for $\Z$-actions) is a special integer-valued function on rectangular symbolic blocks appearing in the array representation of a \zd\ system, called an \emph{oracle}. While the creation of an oracle for $\Z$-actions was possible using standard properties of entropy, in the general amenable case some serious technical difficulties have been encountered, due to lack of subadditivity of certain conditional entropy functions. This problem was resolved in \cite{DZ} using the so-called \emph{tiled entropy}. Tiled entropy is a technical invariant with a lengthy definition and in general cannot be easily reduced to a standard notion of conditional entropy. In this survey we are going to devote some space to describing briefly this new notion, because it provides a new approach to dynamical entropy and may be of independent interest. Also, it is very natural in the context of tiling systems. The remainder of this section will be devoted to tiled entropy.
\medskip

We assume that $\mathbf T=\overset\leftarrow{\lim\limits_k}\T_k$ is a tiling system of $G$ with \tl\ entropy zero (we know that such a tiling system exists for every countable amenable group $G$). Recall that the set of shapes of $\T_k$ is denoted by $\CS_k$, and given $\boldsymbol\CT=(\CT_k)_{k\in\N}\in\mathbf T$, the set of centers of $\CT_k$ of the tiles with shape $S\in\CS_k$ is denoted by $C_S(\CT_k)$. The set of all centers of $\CT_k$ is $C(\CT_k)=\bigcup_{S\in\CS_k}C_S(\CT_k)$. We now introduce more notation. For $S\in\CS_k$, and $s\in S$ by $[S,s]$ we denote the set of elements $\boldsymbol\CT\in\mathbf T$ for which $s^{-1}$ belongs to $C_S(\CT_k)$. If $\boldsymbol\CT\in[S,s]$ then $Ss^{-1}$ is the tile of $\CT_k$ which contains the unity, i.e.\  \emph{the central tile of $\CT_k$}. The set $[S,e]$ will be abbreviated as $[S]$.
Observe that  $\boldsymbol\CT\in[S]$ if and only if $\CT_{k,e}=``S\,"$, so the notation is consistent with that of one-symbol cylinders over the alphabet ${\rm V}_k=\{``S\,": S\in\CS_k\}\cup\{``0"\}$.
The family $\D_{\CS_k}=\{[S,s]:S\in\CS_k,\ s\in S\}$ is a partition of $\mathbf T$. Also note that $\boldsymbol\CT\in[S,s]$ if and only if $s^{-1}(\boldsymbol\CT)\in[S]$, i.e.\  $[S,s]=s([S])$. So, if $\nu$ is a shift-\im\ on $\mathbf T$, then $\nu([S,s])=\nu([S])$ for all $s\in S$.
Recall, that by  determinism of the tiling system, whenever $k'>k$, every shape $S'$ of $\T_{k'}$ decomposes in a unique way as a concatenation of shifted shapes of $\T_k$ and the set of centers of these tiles is denoted by $C_k(S')$. The subset of $C_k(S')$ consisting of centers of tiles with a particular shape $S\in\CS_k$ is denoted by $C_S(S')$ (now the subscript $k$ is not needed; $k$ is determined by $S$).

Now let $(X, G)$ be a \zd\ action, given in its \emph{array representation} $X=\overset\leftarrow{\lim\limits_k} X_{[1,k]}$, which can be introduced similarly as in \S \ref{four}. Let us assume additionally that $X$ has the tiling system $\mathbf T$ as a \tl\ factor (in general we can replace $X$ by its joining with $\mathbf T$, so this assumption will be fulfilled naturally). Then we can combine the layers of $X$ (i.e.\  the subshifts $X_k$) with the layers of $\mathbf T$ (i.e.\  the dynamical tilings $\T_k$) by replacing each $X_k$ by its \tl\ joining $\bar X_k$ with $\T_k$ realized naturally in the common extension $X$. The combined alphabet of $\bar X_k$ is $\bar\Lambda_k=\Lambda_k\times \rm V_k$, while that of $\bar X_{[1,k]}$ is $\bar\Lambda_{[1,k]}=\prod_{l=1}^k\bar\Lambda_l$.  In this manner, the system on $X$ is replaced by its \tl ly conjugate model which is the inverse limit $\bar X = \overset\leftarrow{\lim\limits_k} \bar X_{[1,k]}$. We will call $\bar X$ the \emph{tiled array representation of $X$}.

We will use the following notational convention. For $\bar x\in\bar X$ and $S\in\CS_k$, the expression $\bar x_g=``S\,"$ means that in the tiling $\CT_k$ apparent in the $k$th layer of $\bar x$, at the position $g$ there occurs a center of a tile with shape $S$. Formally, this means that if $\bar x_k$ is the $k$th layer of $\bar x$ then $\bar x_{k,g} \in\Lambda_k\times\{``S\,"\}\subset\bar\Lambda_k$.
With this convention, the notions $[S]$ and $[S,s]$ ($S\in\CS_k,\ s\in S$) may be applied to $\bar X$ in the following way:
$$
[S]=\{\bar x\in \bar X: \bar x_e=``S\,"\}, \ \ [S,s]=s([S]) =\{\bar x\in \bar X: \bar x_{s^{-1}}=``S\,"\}.
$$

\begin{definition}\footnote{Alternatively, one can define just the unconditional version and then put
$$
H_{\T_k}(\mu,\P|\Q)=H_{\T_k}(\mu,\P\vee\Q)-H_{\T_k}(\mu,\Q).
$$
} Let $\P$ and $\Q$ be two finite Borel partitions of $\bar X$ with $|\P|>1$, and $\mu$ a Borel probability measure on $\bar X$. By the \emph{$k$th tiled entropy of $\P$} and \emph{$k$th conditional tiled entropy of $\P$ given $\Q$} with respect to $\mu$ we will mean the following terms:
$$
H_{\T_k}(\mu,\P)=\sum_{S\in\CS_k}\mu([S])H(\mu_{[S]},\P^S), \ \ H_{\T_k}(\mu,\P|\Q)=\sum_{S\in\CS_k}\mu([S])H(\mu_{[S]},\P^S|\Q^S),
$$
where $\mu_{[S]}$ is the normalized conditional measure $\mu$ on $[S]$.
\end{definition}

The monotonicity of the tiled entropy revealed below, in \cite[Theorem 4.27]{DZ}, will be crucial in the construction of quasi-symbolic extensions in the Quasi-Symbolic Extension Entropy Theorem. We remark that the standard static entropy (with respect to elements $F_n$ of a F\o lner \sq) lacks subbadditivity and thus, in general, fails to behave monotonically with respect to $n$.

\begin{theorem}\label{tte}
The \sq s of tiled entropies $(H_{\T_k}(\mu,\P))_{k\in\N}$ and $(H_{\T_k}(\mu,\P|\Q))_{k\in\N}$, converge on $\M (\bar X, G)$ decreasingly to $h_\mu (G,\P)$ and $h_\mu (G,\P|\Q)$, respectively.
\end{theorem}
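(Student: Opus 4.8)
The plan is to establish the two convergence statements for the tiled entropies by reducing the conditional case to the unconditional one and then proving that $H_{\T_k}(\mu,\P)$ decreases to $h_\mu(G,\P)$. First I would fix notation and recall the key structural facts: by determinism of the tiling system $\mathbf T$, whenever $k'>k$, every shape $S'\in\CS_{k'}$ decomposes uniquely as a concatenation $S'=\bigsqcup_{S\in\CS_k}\bigsqcup_{c\in C_S(S')}Sc$, and each tile $S'c'$ of $\CT_{k'}$ splits into tiles $Scc'$ of $\CT_k$. This combinatorial nesting is exactly what should give monotonicity, since the shape $S'$ is partitioned into smaller shapes $S$ whose associated Shannon entropies (on the finer conditional measures) can be compared via subadditivity of the Shannon entropy along the partition $\P^{S'}=\bigvee_{S,c}(\P^S)c$, for which the basic subadditivity property stated earlier in the excerpt applies.

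The main step would be to prove monotonicity: $H_{\T_{k'}}(\mu,\P)\le H_{\T_k}(\mu,\P)$ for $k'>k$. I would expand $H_{\T_{k'}}(\mu,\P)=\sum_{S'\in\CS_{k'}}\mu([S'])H(\mu_{[S']},\P^{S'})$ and use subadditivity of Shannon entropy over the shape decomposition of $S'$ to bound $H(\mu_{[S']},\P^{S'})$ above by a weighted sum of terms $H(\mu_{[S']},(\P^S)c)$. The crucial identity is that shift-invariance of $\mu$ together with the relation $[S,s]=s([S])$ lets one reorganize the weighted sum $\sum_{S'}\mu([S'])\sum_{S,c}H(\mu_{[S']},(\P^S)c)$ into $\sum_{S\in\CS_k}\mu([S])H(\mu_{[S]},\P^S)=H_{\T_k}(\mu,\P)$. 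Here one exploits that the normalized conditional measures, transported by the shift $c$, redistribute the mass $\mu([S'])\cdot|C_S(S')|$ precisely onto $\mu([S])$; this bookkeeping is essentially a Kac-type counting relating centers at level $k'$ to centers at level $k$ through the fixed shape decompositions. I would verify this mass-balance using the invariance of $\mu$ and the disjointness of the tiles, which guarantees each level-$k$ center is counted exactly once.

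The remaining step is to identify the decreasing limit with $h_\mu(G,\P)$. Since all shapes of $\T_k$ become $(K,\varepsilon)$-invariant for $k$ large (the F\o lner system property), and since $\mathbf T$ has the F\o lner property $\bigcup_k\CS_k$ being a F\o lner sequence, each $H(\mu_{[S]},\P^S)/|S|$ approximates the dynamical entropy $h_\mu(G,\P)$ along increasingly invariant shapes, by the Ornstein--Weiss convergence of $\frac1{|F_n|}H(\mu,\P^{F_n})$. The averaging $\sum_{S\in\CS_k}\mu([S])H(\mu_{[S]},\P^S)$ can then be shown, via an integration/disintegration over the shift-invariant measure and the Shannon--McMillan type behaviour of $H(\mu_{[S]},\P^S)$ over a tiling of density tending to one, to converge to $h_\mu(G,\P)$. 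I would combine the already-established monotone decrease with a lower bound $\liminf_k H_{\T_k}(\mu,\P)\ge h_\mu(G,\P)$ coming from the infimum characterization $h_\mu(G,\P)=\inf_F\frac1{|F|}H(\mu,\P^F)$; since each shape is a legitimate finite set $F$, no tiled average can drop below $h_\mu(G,\P)$, so the limit is exactly $h_\mu(G,\P)$. The conditional version follows by the remark in the definition's footnote, writing $H_{\T_k}(\mu,\P|\Q)=H_{\T_k}(\mu,\P\vee\Q)-H_{\T_k}(\mu,\Q)$ and passing to limits using $h_\mu(G,\P|\Q)=h_\mu(G,\P\vee\Q)-h_\mu(G,\Q)$.

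The hard part will be the exact mass-balance reorganization in the monotonicity step: controlling how the conditional measures $\mu_{[S']}$ at level $k'$ restrict and shift down to $\mu_{[S]}$ at level $k$, and confirming that the combinatorial weights $|C_S(S')|$ together with the shift-invariance of $\mu$ reproduce the weights $\mu([S])$ without loss. This is where the determinism and congruency of the tiling system are indispensable, and where a naive use of subadditivity alone (which fails for the raw static entropies along a F\o lner sequence, as noted in the excerpt) must be replaced by the rigid shape-nesting structure.
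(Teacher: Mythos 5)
The survey itself does not reproduce a proof of this theorem (it is quoted from \cite[Theorem 4.27]{DZ}), so I am judging your plan on its own terms. The skeleton of your monotonicity step is right, but the ``mass-balance reorganization'' you correctly flag as the crux is not an identity, and treating it as one hides a necessary ingredient. After subadditivity over the deterministic decomposition $S'=\bigsqcup_{S\in\CS_k}\bigsqcup_{c\in C_S(S')}Sc$ you are left with $\sum_{S'}\mu([S'])\sum_{S}\sum_{c\in C_S(S')}H(\mu_{[S']},c^{-1}\P^S)$; by invariance each term equals $\mu(c([S']))\,H(\mu_{c([S'])},\P^S)$, and the sets $c([S'])$, over all $S'\in\CS_{k+1}$ and $c\in C_S(S')$, partition $[S]$. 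Hence $\mu_{[S]}$ is a convex combination of the measures $\mu_{c([S'])}$, and to close the estimate you must invoke concavity of $\nu\mapsto H(\nu,\P^S)$, which gives an \emph{inequality} $\le\mu([S])H(\mu_{[S]},\P^S)$, not the equality your write-up asserts (``reproduce the weights $\mu([S])$ without loss''). This is fixable, but it also forces you to rerun the entire argument with conditional Shannon entropies (conditional subadditivity is available in the survey, and $\nu\mapsto H(\nu,\P|\Q)$ is likewise concave): your plan to handle the conditional case via $H_{\T_k}(\mu,\P|\Q)=H_{\T_k}(\mu,\P\vee\Q)-H_{\T_k}(\mu,\Q)$ yields only the value of the limit, not the monotonicity of the conditional sequence, since a difference of two decreasing sequences need not decrease. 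As the monotone decrease of the \emph{conditional} tiled entropy is exactly what the theorem is invoked for later, this is a genuine gap.

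The more serious problem is your lower bound. The infimum rule gives $\frac1{|F|}H(\mu,\P^F)\ge h_\mu(G,\P)$ for the measure $\mu$ itself, whereas $H_{\T_k}(\mu,\P)$ is built from the conditional measures $\mu_{[S]}$ with weights $\mu([S])$; since $\sum_{S\in\CS_k}|S|\,\mu([S])=1$, it is a convex combination of the quantities $\frac1{|S|}H(\mu_{[S]},\P^S)$, each of which may individually lie \emph{below} $h_\mu(G,\P)$ because conditioning on the tiling decreases Shannon entropy. So ``each shape is a legitimate finite set $F$'' proves nothing. The standard repair bounds $H(\mu,\P^{F_n})$ from above by first refining by the partition recording the pattern of $\CT_k$ over $F_n$, then applying subadditivity tile by tile and concavity to reassemble $|F_n|\,H_{\T_k}(\mu,\P)$ up to boundary errors; the cost of the refinement is the entropy of the tiling factor, and killing it requires the standing assumption that $\mathbf T$ has zero topological entropy. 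Your proposal never uses that assumption, which is the clearest symptom that the identification of the limit is not yet proved. (The upper bound via the F\o lner property of the shapes also needs more than a gesture at Shannon--McMillan behaviour --- e.g.\ ergodic decomposition for non-ergodic $\mu$ --- though that part of your sketch is at least pointed in the right direction.)
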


\section{The symbolic extension theory for amenable group actions} \label{theory}

 In this section, we introduce the Quasi-Symbolic Extension Entropy Theorem obtained in \cite{DZ} for general amenable group actions. Here, by a quasi-symbolic system, we mean a \tl\ joining of a subshift with a zero entropy tiling system discussed in the previous section.

 \medskip

In order to answer Question \ref{202002081406} and compute $\mathbf{h_\text{sex}}$ efficiently for $G$-actions,
it is natural to expect that a result similar to Theorem \ref{sex-vp} holds. The definitions of \emph{extension entropy function} and the \emph{symbolic extension entropy function} in Definition \ref{202002092127} work directly for $G$-actions. We will use the same notations for these functions.
For $G$-actions one inequality in the Symbolic Extension Entropy Variational Principle holds trivially (see Proposition \ref{inequa}).
While, as shown in \cite{BD, D1}, the reversed inequality in Theorem~\ref{sex-vp} depends essentially on Theorem~\ref{seet}, the Symbolic Extension Entropy Theorem. As we shall see, the validity of similar results for $G$-actions will face some limitations, and hence the reversed direction of Proposition \ref{inequa} will also face the same limitations.

\begin{proposition} \label{inequa}
$\mathbf{h_\text{sex}} (X, G)\ge \sup \{h_\text{sex} (\mu): \mu\in \mathcal{M} (X, G)\}$.
\end{proposition}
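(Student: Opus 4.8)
The plan is to obtain this inequality --- the ``easy'' half of the Symbolic Extension Entropy Variational Principle --- directly from the definitions of $\mathbf{h_\text{sex}}$, $h_\text{sex}$, and $h^\pi$, without invoking the Symbolic Extension Entropy Theorem. The argument should go through essentially verbatim as in the $\Z$-case, since it relies only on the push-down structure of the extension entropy function together with the variational-principle definition of topological entropy for amenable group actions (available by \cite{MP82, ST80}). First I would dispose of the degenerate case: if $(X,G)$ admits no symbolic extension, then $\mathbf{h_\text{sex}}(X,G)=\infty$ by Definition \ref{202002092121}, while simultaneously $h_\text{sex}(\mu)=\inf\emptyset=+\infty$ for every $\mu$ by the convention recorded in Definition \ref{202002092127}; hence both sides equal $+\infty$ and the inequality is trivial. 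So from now on I assume $(X,G)$ does admit a symbolic extension.

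Next I would fix an \emph{arbitrary} symbolic extension $\pi\colon Y\to X$. Because $h_\text{sex}(\mu)$ is by definition the infimum of $h^{\pi'}(\mu)$ over all symbolic extensions $\pi'$, we immediately get $h_\text{sex}(\mu)\le h^\pi(\mu)$ for every $\mu\in\MGX$, and therefore $\sup_{\mu\in\MGX}h_\text{sex}(\mu)\le\sup_{\mu\in\MGX}h^\pi(\mu)$. The crux is to identify this last supremum with $\htop(Y,G)$. Recalling that $h^\pi$ is the push-down of the entropy function of $(Y,G)$ along $\pi$, and that the induced map $\pi\colon\MGY\to\MGX$ is a (continuous affine) surjection, I would compute
$$
\sup_{\mu\in\MGX}h^\pi(\mu)=\sup_{\mu\in\MGX}\ \sup_{\pi\nu=\mu}h_\nu(Y,G)=\sup_{\nu\in\MGY}h_\nu(Y,G)=\htop(Y,G),
$$
where the middle equality merely reorganizes a double supremum over pairs $(\mu,\nu)$ with $\pi\nu=\mu$ into a single supremum over $\nu\in\MGY$ (each $\nu$ determines $\mu=\pi\nu$, and every $\nu$ is counted exactly once), and the final equality is the definition of topological entropy for amenable group actions.

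Combining the two estimates yields $\sup_{\mu\in\MGX}h_\text{sex}(\mu)\le\htop(Y,G)$ for \emph{every} symbolic extension $Y$; taking the infimum over all such $Y$ then gives $\sup_{\mu\in\MGX}h_\text{sex}(\mu)\le\mathbf{h_\text{sex}}(X,G)$, which is the claim. I do not anticipate any genuine obstacle in this direction: all three ingredients --- the infimum definition of $h_\text{sex}$, the push-down identity for $h^\pi$, and the variational principle for $\htop$ --- hold for amenable groups exactly as for $\Z$. The real difficulty, as the surrounding text emphasizes, lies entirely in the \emph{reversed} inequality, which requires the full strength of the symbolic extension machinery (Theorem \ref{seet}) and in fact fails for general amenable $G$, forcing the passage to quasi-symbolic extensions.
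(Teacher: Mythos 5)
Your argument is correct and is precisely the ``trivial'' argument the paper alludes to (the survey states Proposition \ref{inequa} without proof, remarking only that this inequality holds trivially): fix any symbolic extension $\pi\colon Y\to X$, bound $h_\text{sex}\le h^\pi$ pointwise, collapse the double supremum $\sup_\mu\sup_{\pi\nu=\mu}h_\nu(Y)$ to $\sup_\nu h_\nu(Y)=\htop(Y,G)$ via the variational principle, and take the infimum over all symbolic extensions. The degenerate case and the surjectivity of the induced map on invariant measures are handled correctly, so there is nothing to add.
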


As we can still talk about \ens\ and its \se\ for $G$-actions with finite topological entropy, inspired by Theorem \ref{seet} one may ask:

\begin{question} \label{ask-amen}
Let $\mathcal{H}$ be an \ens\ of an action $(X, G)$ with finite topological entropy, and let $E$ be a bounded function on $\M(X,G)$. Is it true that $E$ equals $h^\pi$ in some symbolic extension $\pi:Y\to X$ if and only if it is an affine superenvelope of $\H$?
\end{question}

We can immediately give an affirmative answer concerning the implication $\Longrightarrow$:

\begin{theorem}\label{easy}
Let $\pi: Y\to X$ be a
symbolic extension of the action $(X, G)$. Then the extension entropy function $h^\pi$ is an affine \se\ of the \ens\ of $X$.
\end{theorem}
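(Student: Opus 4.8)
The plan is to verify the two required properties of $h^\pi$ separately: that it is affine, and that it is a \se\ of the \ens\ $\H$ of $X$. Affinity is immediate, since $h^\pi$ is the push-down along $\pi$ of the entropy function $\nu\mapsto h_\nu(Y)$ on $\MGY$, which is affine by Proposition \ref{dodado}(1); as $\MGY$ and $\MGX$ are Choquet simplices and $\pi$ sends ergodic measures to ergodic measures, Proposition \ref{pushdown}(2) shows that $h^\pi$ is affine (and finite, as $Y$ is a subshift). For the \se\ property I would fix, using Theorem \ref{202002161830}, a representative $\H=(h_k)_{k\ge 0}$ that is affine with u.s.c.\ differences; by Proposition \ref{fact2}(2) the collection of \se s does not depend on this choice. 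Proposition \ref{fact2}(3) then reduces everything to showing that each $h^\pi-h_k$ is nonnegative and u.s.c.\ on $\MGX$. Nonnegativity is clear, because $h^\pi\ge h\ge h_k$, where $h$ is the entropy function of $X$ (the limit of $\H$) and the first inequality holds since $h_\nu(Y)\ge h_{\pi\nu}(X)$ for every $\nu\in\MGY$. So the whole matter comes down to upper semicontinuity of $h^\pi-h_k$.

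First I would treat the case when $X$ is \zd. Represent $X$ by a refining \sq\ of finite clopen partitions $(\P_{[1,k]})_{k\in\N}$, so that $h_k(\mu)=h_\mu(\P_{[1,k]})$, and let $\P_\Lambda$ be the (clopen, generating) alphabet partition of the subshift $Y$, whose lift against $\pi$ I denote by the same symbol. Since $\P_\Lambda$ generates $Y$ and the lift of $\P_{[1,k]}$ is measurable with respect to it, one has $h_\nu(Y)=h_\nu(\P_\Lambda\vee\P_{[1,k]})$ and $h_\nu(\P_{[1,k]}\mid\P_\Lambda)=0$ for every $\nu\in\MGY$, whence
$$
h_\nu(Y)-h_k(\pi\nu)=h_\nu(\P_\Lambda\vee\P_{[1,k]})-h_\nu(\P_{[1,k]})=h_\nu(\P_\Lambda\mid\P_{[1,k]}).
$$
As $\P_\Lambda$ and $\P_{[1,k]}$ are clopen, Proposition \ref{dodado}(2) shows this function is u.s.c.\ on $\MGY$. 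Because $h^\pi-h_k$ is exactly its push-down along $\pi$ (the subtracted term being constant on $\pi$-fibres), Proposition \ref{pushdown}(2) gives that $h^\pi-h_k$ is u.s.c.\ on $\MGX$, settling the \zd\ case.

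For general $X$ I would reduce to the \zd\ case via a principal extension. Fix a \zd\ principal extension $\pi':X'\to X$ (Proposition \ref{principal}) and form the fibre product $Z=\{(y,x')\in Y\times X':\pi(y)=\pi'(x')\}$, with coordinate factor maps $q:Z\to Y$ and $p:Z\to X'$. The extension $q$ is principal, being the pullback of $\pi'$ (conditionally on the $Y$-coordinate the remaining coordinate ranges over a fibre of $X'\to X$, so $h_\zeta(Z\mid Y)\le h_{p\zeta}(X'\mid X)=0$); hence $h_\zeta(Z)=h_{q\zeta}(Y)$ for all $\zeta\in\M(Z,G)$, and taking the relatively independent joining over $X$ shows that the extension entropy function of $p$ satisfies $h^p=h^\pi\circ\pi'$. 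Now $Z$ is \zd\ but \emph{not} symbolic, so the generating-partition computation is unavailable; its replacement is principality of $q$. Writing $h'_k(\mu')=h_{\mu'}(\P'_{[1,k]})$ for a clopen \ens\ of $X'$ and lifting the clopen partitions $\P_\Lambda$ (from $Y$) and $\P'_{[1,k]}$ (from $X'$) to $Z$, principality forces $h_\zeta(\P'_{[1,k]}\mid\P_\Lambda)=0$ (as $h_\zeta(\P_\Lambda)=h_{q\zeta}(Y)=h_\zeta(Z)$ already attains the total entropy), so that
$$
h_\zeta(Z)-h'_k(p\zeta)=h_\zeta(\P_\Lambda)-h_\zeta(\P'_{[1,k]})=h_\zeta(\P_\Lambda\mid\P'_{[1,k]}),
$$
again u.s.c.\ by Proposition \ref{dodado}(2). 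Pushing down along $p$ makes $h^p-h'_k$ u.s.c., so by the \zd\ analysis $h^p=h^\pi\circ\pi'$ is an affine \se\ of the \ens\ of $X'$, hence of $\H\circ\pi'$ by uniform equivalence (Definition \ref{ensf}, Proposition \ref{fact2}(2)). With the u.s.c.-differences representative this yields that $(h^\pi-h_k)\circ\pi'$ is u.s.c.\ on $\MGXP$ for every $k$, and it remains to descend: if $g\circ\pi'$ is u.s.c.\ on $\MGXP$ and $\pi':\MGXP\to\MGX$ is a continuous surjection, then $g$ is u.s.c.\ on $\MGX$ (given $\mu_i\to\mu$ realizing $\limsup g(\mu_i)$, choose preimages $\nu_i$, pass to a convergent subsequence $\nu_i\to\nu$ with $\pi'\nu=\mu$, and use u.s.c.\ of $g\circ\pi'$ at $\nu$). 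Applying this to $g=h^\pi-h_k$ completes the argument.

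The hard part will be the general (non-\zd) case: the clean identity $h_\nu(Y)-h_k(\pi\nu)=h_\nu(\P_\Lambda\mid\P_{[1,k]})$ hinges on $Y$ carrying a single generating clopen partition, which the auxiliary \zd\ system $Z$ does not have. The device that rescues the computation is principality of the pullback $q:Z\to Y$, which makes the ``reverse'' conditional entropy $h_\zeta(\P'_{[1,k]}\mid\P_\Lambda)$ vanish and thereby turns a difference of dynamical entropies back into a single u.s.c.\ conditional entropy; together with the descent of upper semicontinuity through $\pi'_*$, this is the crux. I expect verifying principality of the pullback and the identity $h^p=h^\pi\circ\pi'$ (via the relatively independent joining) to be the only genuinely technical, though routine, points.
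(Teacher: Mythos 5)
Your proposal is correct and follows essentially the same route as the paper: the \zd\ case is handled by exactly the paper's computation (reduce via Proposition \ref{fact2}(3) to upper semicontinuity of $h^\pi-h_k$, identify the difference upstairs with the conditional dynamical entropy $h_\nu(\P_\Lambda\mid\P_{[1,k]})$ of clopen partitions, and push down), and the general case is deduced, as the paper indicates, by passing to a \zd\ principal extension. Your fibre-product construction with the verification that $q$ is principal and $h^p=h^\pi\circ\pi'$ is precisely the standard way of filling in that reduction, so there is nothing to object to.
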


The aforementioned quasi-symbolic extensions are defined as follows.

\begin{definition}
By a \emph{quasi-symbolic system} $\bar Y$ we mean a \tl\ joining $Y\vee\mathbf T$ of a subshift $Y$ with a zero entropy tiling system $\mathbf T$. By a \emph{quasi-symbolic extension} of a system we mean a \tl\ extension which is a quasi-symbolic system.
\end{definition}

With this notion in hand, we can prove (in full generality) a slightly weakened version of the Symbolic Extension Entropy Theorem, where the symbolic extensions are replaced by quasi-symbolic extensions:

\begin{theorem}\label{quasi}
Let $\mathcal{H}$ be an \ens\ of the action $(X, G)$ with finite topological entropy. Then $\EA$ is a (finite) affine \se\ of the structure $\H$ if and only if there exists a quasi-symbolic extension $\pi:\bar Y\to X$ such that $\EA=h^\pi$.
\end{theorem}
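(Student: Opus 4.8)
The plan is to establish the two implications separately, following the architecture of the (omitted) proof of Theorem~\ref{seet}\eqref{former} for $\Z$-actions, but replacing the odometer by the zero-entropy tiling system of \S\ref{prepa} and the subadditivity of entropy by the monotonicity of tiled entropy (Theorem~\ref{tte}). The implication from a quasi-symbolic extension to a \se\ is the easy one, and is essentially Theorem~\ref{easy} extended to quasi-symbolic extensions. The point is that a quasi-symbolic system $\bar Y=Y\vee\mathbf T$ is a principal extension of its subshift factor $Y$: since $\mathbf T$ has zero \tl\ entropy we have $h_\nu(\mathbf T)=0$ for every invariant $\nu$, whence $h_\nu(\bar Y)=h_{\nu_Y}(Y)$ and $h^\pi$ is the push-down of the entropy function of $Y$. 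By Proposition~\ref{pushdown} this push-down is finite, affine and \usc; that it dominates every $h_k$ with defects tending to $0$ then follows as in the $\Z$-case, checking via Proposition~\ref{dodado} that each $h^\pi-h_k$ is the push-down of a \usc\ function and invoking Proposition~\ref{fact2}(3).

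The substance lies in the converse. Given a finite affine \se\ $\EA$ of $\H$, I would first reduce to a \zd\ system: by Huczek's theorem \cite{H} the action $(X,G)$ admits a \zd\ principal extension $\pi':X'\to X$, and because lifting against a principal extension carries $\H$ to an \ens\ of $X'$ and preserves affine \se s (Propositions~\ref{fact2}(2) and~\ref{pushdown}), the lift $\EA\circ\pi'$ is again a finite affine \se; moreover any quasi-symbolic extension of $X'$ is one of $X$, with matching push-down entropy function. Thus one may assume $X$ is \zd. Next, fix a zero-entropy tiling system $\mathbf T$ of $G$ (Theorem~\ref{fs}), join $X$ with $\mathbf T$ so that $\mathbf T$ becomes a factor, and pass to the tiled array representation $\bar X=\overset\leftarrow{\lim\limits_k}\bar X_{[1,k]}$. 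The extension $\bar Y$ to be built will itself be a \tl\ joining of a subshift (the ``first row'') with this same $\mathbf T$ (the ``second row''), so that it is quasi-symbolic by construction.

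The core step is the construction of the \emph{oracle}. Treating $\EA$ as the a priori predicted entropy function, I would produce over the tiles of $\mathbf T$ an integer-valued function prescribing how many admissible first-row blocks to allot to each tile, so that the exponential growth rate of the block count over a tile of shape $S$ tracks $\EA$, while the allotment respects the congruency and determinism of $\mathbf T$ (a block over a larger tile must decompose compatibly into blocks over its sub-tiles). Declaring the first-row subshift to consist of all labelings permitted by the oracle, and joining it with $\mathbf T$, yields the candidate $\bar Y$. It then remains to verify $h^\pi=\EA$: the lower bound is a direct count of the blocks the oracle manufactures, while the upper bound asserts that no invariant measure on $\bar Y$ can carry more entropy than $\EA$ predicts. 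The indispensable tool here is Theorem~\ref{tte}: the tiled entropies $H_{\T_k}(\mu,\cdot)$ converge \emph{decreasingly} to the dynamical entropy, and this monotonicity in $k$ is exactly what permits the per-tile counts to be assembled into a coherent global estimate.

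I expect the principal obstacle to be precisely this assembly across scales. In the $\Z$-case the oracle is governed by the subadditivity of entropy along concatenated blocks; for a general amenable $G$ the relevant conditional-entropy functionals are not subadditive (as emphasized before Theorem~\ref{tte}), so naive additive bookkeeping over tiles breaks down. Circumventing this is the raison d'\^etre of tiled entropy: its monotone convergence (Theorem~\ref{tte}), combined with the zero-entropy and F\o lner-invariance properties of the tiling system (Theorem~\ref{fs}), is what keeps the oracle's counting simultaneously consistent at every scale $k$. Reconciling the oracle with the determinism of $\mathbf T$---so that blocks nest correctly as shapes grow---will be the most delicate and technical part of the construction, and is where the bulk of the work in \cite{DZ} resides.
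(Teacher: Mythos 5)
Your proposal is correct and follows essentially the same route as the paper's own (sketched) argument: the easy implication is the push-down argument of Theorem~\ref{easy}, using that the zero-entropy tiling row contributes nothing to measure entropy, and the converse proceeds by passing to a \zd\ principal extension joined with a zero-entropy tiling system, deriving an oracle from $\EA$ viewed as the a priori predicted entropy function, and building $\bar Y$ as a nested intersection verified via the monotone convergence of tiled entropy (Theorem~\ref{tte}). You have also correctly located the technical heart of \cite{DZ} in the failure of subadditivity and the congruency/determinism bookkeeping across scales.
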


We remark that it is unknown whether, in full generality, the quasi-symbolic extension can be repaced by a symbolic one. This depends on the unsolved question, does the zero-entropy tiling system admit a zero entropy symbolic extension (in fact it is unknown whether it admits any symbolic extension). This problem, in turn, depends on another, concerning the so-called comparison property. We will discuss this problem in Section \ref{im-theory}.

\medskip

With the help of Theorem \ref{quasi}, the following result is obtained in \cite{DZ}.

\begin{theorem} \label{ahe-am}
Let $(X, G)$ be a $G$-action with finite topological entropy. Then the following statements are equivalent:
 \begin{enumerate}


 \item \label{sec-am} The action $(X, G)$ admits an \ens\ such that it converges uniformly to the entropy function $\mu\mapsto h_\mu (X, G)$ on $\mathcal{M} (X, G)$.

 \item \label{th-am} The action $(X, G)$ admits a principal quasi-symbolic extension.


\item The entropy function on $\mathcal{M} (X, G)$ is the minimal superenvelope of the entropy structure of $(X, G)$.
 \end{enumerate}
\end{theorem}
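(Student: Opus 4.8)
The plan is to route everything through the minimal \se, treating the equality $h=\EH$ (where $h=\lim_k h_k$ is the entropy function) as the central condition and proving $\eqref{sec-am}\Leftrightarrow(h=\EH)\Leftrightarrow\eqref{th-am}$. Throughout I would keep in mind three elementary facts about any \ens\ $\mathcal H=(h_k)_{k\ge0}$ of $(X,G)$: its limit function is exactly the entropy function $h$, which is finite since $h\le\htop(X,G)<\infty$ and affine by Proposition \ref{dodado}; and every \se\ $E$ satisfies $E\ge h_k$ for all $k$, hence $E\ge h$, so in particular $\EH\ge h$ always holds. Consequently the assertion $h=\EH$ is simply the assertion that $h$ is itself a \se, and once $h$ is known to be a \se\ its minimality is automatic. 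Since uniform convergence is preserved under uniform equivalence of structures, \eqref{sec-am} may be checked on any convenient representative.

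First I would dispose of $\eqref{sec-am}\Leftrightarrow(h=\EH)$, which is purely functional-analytic and involves no extension. If $h_k\to h$ uniformly, then $0\le\widetilde{h-h_k}\le\|h-h_k\|_\infty\to0$, so the defects $\widetilde{h-h_k}-(h-h_k)$ vanish uniformly and in particular pointwise; thus $h$ is a \se\ and $h=\EH$. Conversely, suppose $h=\EH$ is a \se, so the defects tend pointwise to $0$. Because $h_k\uparrow h$, the functions $h-h_k$ decrease to $0$, hence so do their u.s.c.\ envelopes $\widetilde{h-h_k}$, which therefore form a decreasing sequence of u.s.c.\ functions converging pointwise to the continuous limit $0$. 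Proposition \ref{exchanging} then forces $\sup_\mu\widetilde{h-h_k}(\mu)\to0$, and since $0\le h-h_k\le\widetilde{h-h_k}$ this gives the uniform convergence $h_k\to h$ required by \eqref{sec-am}.

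Next I would prove $(h=\EH)\Leftrightarrow\eqref{th-am}$, where the one substantial input is the Quasi-Symbolic Extension Entropy Theorem (Theorem \ref{quasi}). For the forward direction, $h$ is a finite affine \se\ of $\mathcal H$, so Theorem \ref{quasi} yields a quasi-symbolic extension $\pi:\bar Y\to X$ with $h^\pi=h$; for any $\nu\in\mathcal M(\bar Y,G)$ with $\pi\nu=\mu$ one then has $h_\mu(X)\le h_\nu(\bar Y)\le h^\pi(\mu)=h(\mu)=h_\mu(X)$, the first inequality because $X$ is a measure-theoretic factor of $\bar Y$. Hence $h_\nu(\bar Y)=h_\mu(X)$, and by the finite-entropy formula $h_\nu(\bar Y|X)=h_\nu(\bar Y)-h_\mu(X)$ the extension is principal. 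For the converse, a principal quasi-symbolic extension $\pi:\bar Y\to X$ satisfies $h_\nu(\bar Y)=h_{\pi\nu}(X)$ for every $\nu$, whence $h^\pi(\mu)=\sup\{h_\nu(\bar Y):\pi\nu=\mu\}=h_\mu(X)=h(\mu)$; reading Theorem \ref{quasi} in the direction from extension to superenvelope shows $h=h^\pi$ is a \se, and combined with $\EH\ge h$ this gives $h=\EH$.

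The genuinely hard content is encapsulated in Theorem \ref{quasi}, which I am invoking as a black box; modulo it, the remaining argument is a functional-analytic translation. The only delicate point is the upgrade, in the step $(h=\EH)\Rightarrow\eqref{sec-am}$, from pointwise vanishing of the defects to \emph{uniform} convergence of the structure---precisely the Dini-type statement packaged in Proposition \ref{exchanging}, which I expect to be the main obstacle to watch. One should also be careful that the phrase ``$h=\EH$'' is meaningful only because $\EH\ge h$ holds unconditionally, and that the principality bookkeeping relies on $\htop(X,G)<\infty$ to license the subtractive formula for the conditional entropy $h_\nu(\bar Y|X)$.
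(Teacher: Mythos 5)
Your proposal is correct and follows essentially the route the paper indicates: the survey derives Theorem \ref{ahe-am} ``with the help of Theorem \ref{quasi}'', and your argument does exactly that, reducing \eqref{sec-am} and \eqref{th-am} to the condition $h=\EH$ via the iff of Theorem \ref{quasi} together with the Dini-type upgrade from Proposition \ref{exchanging} and the unconditional inequality $\EH\ge h$. The bookkeeping (affinity and finiteness of $h$, the subtractive formula for $h_\nu(\bar Y|X)$ under finite entropy, and invariance of the superenvelope collection under uniform equivalence) is all handled correctly.
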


We also remark that, if $G$ contains $\mathbb{Z}$ as a subgroup of infinite index, then, by \cite[Lemma 5.7]{LThom}, any $C^1$ action $(X, G)$ on a compact smooth manifold $X$ has zero topological entropy, and so $(X, G)$ admits a principal quasi-symbolic extension by Theorem \ref{ahe-am}.

\begin{remark} \label{remark}
As we shall see in the next section, for some types of the group $G$ we can replace the quasi-symbolic extensions in Theorem \ref{quasi} by genuine symbolic extensions. For the same groups we can replace quasi-symbolic extensions by symbolic extensions in Theorem \ref{ahe-am}.
\end{remark}

\section{Improvements of Theorem \ref{quasi}} \label{im-theory}

The quasi-symbolic extensions in Theorem \ref{quasi} are joinings of symbolic systems with a zero entropy tiling system discussed in \S \ref{prepa}. In general, the tiling system is just a \zd\ system, not a subshift. It is hence clear that the quasi-symbolic extensions in Theorem \ref{quasi} can be replaced by symbolic extensions for such amenable groups, for which there exists a zero-entropy tiling system which admits a zero entropy symbolic extension.
In this section we will present some classes of such groups.

\medskip
Recall that a countable group is \emph{residually finite} (see for example \cite{Ma}) if the intersection of all its subgroups of finite index is the trivial group $\{e\}$, equivalently, the intersection of all its normal subgroups of finite index is trivial. For countable amenable groups which are residually finite, the full version of the Symbolic Extension Entropy Theorem holds:

\begin{theorem}\label{fc}
Let $\mathcal{H}$ be an \ens\ of an action $(X, G)$, where the group $G$ is countable amenable and residually finite, and the action $(X, G)$ has finite topological entropy. Then $\EA$ is a (finite) affine \se\ of the structure $\H$ if and only if there exists a symbolic extension $\pi: Y\to X$ such that $\EA=h^\pi$.
\end{theorem}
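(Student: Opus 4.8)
The plan is to deduce the theorem from the Quasi-Symbolic Extension Entropy Theorem (Theorem~\ref{quasi}) by upgrading, at no entropic cost, the zero entropy tiling system occurring in a quasi-symbolic extension to a genuine subshift. The implication $\Longleftarrow$ is already contained in Theorem~\ref{easy}, so all the content lies in $\Longrightarrow$. Assume then that $\EA$ is a finite affine \se\ of $\H$. By Theorem~\ref{quasi} there is a quasi-symbolic extension $\pi\colon\bar Y\to X$ with $h^\pi=\EA$, where $\bar Y=Y\vee\mathbf T$ with $Y$ a subshift and $\mathbf T$ a zero entropy tiling system, and where $\mathbf T$ is a \tl\ factor of $\bar Y$ (being a coordinate of a joining). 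Crucially, the construction underlying Theorem~\ref{quasi} may be run with \emph{any} prescribed zero entropy (deterministic F\o lner) tiling system in the role of $\mathbf T$; so it suffices to supply one that admits a zero entropy \emph{symbolic} extension and to splice it in.

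\emph{The splicing step.} Suppose $\phi\colon Z\to\mathbf T$ is a zero entropy symbolic extension, i.e.\ $Z$ is a subshift over a finite alphabet and $\phi$ is a principal factor map. Form the fibre product of $\bar Y$ and $Z$ over their common factor $\mathbf T$,
$$
W=\{(\bar y,z)\in\bar Y\times Z:\ \phi(z)\text{ equals the }\mathbf T\text{-coordinate of }\bar y\}.
$$
As $\bar Y\subset Y\times\mathbf T$ and the $\mathbf T$-coordinate is continuous, $W$ is conjugate to a closed shift-\inv\ subset of the finite-alphabet product $Y\times Z$, hence a subshift. The projection $W\to\bar Y$ is onto (because $\phi$ is) and, being the pullback of the principal map $\phi$, is again principal; therefore the composite $\pi'=\pi\circ(W\to\bar Y)\colon W\to X$ is a symbolic extension of $X$ with $h^{\pi'}=h^\pi=\EA$. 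This already proves the theorem, granted the construction below.

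\emph{The main construction.} It remains to produce, for a countable amenable residually finite $G$, a zero entropy tiling system $\mathbf T$ admitting a zero entropy symbolic extension. Using residual finiteness, fix a nested \sq\ of finite-index normal subgroups $G=H_0\supsetneq H_1\supsetneq\cdots$ with $\bigcap_kH_k=\{e\}$ (a diagonal choice over an enumeration of $G\setminus\{e\}$). Choose transversals so that $S_{k+1}=S_kR_k$, where $R_k\subset H_k$ is a transversal of $H_{k+1}$ in $H_k$; then each $S_k$ is a single shape, the periodic coset tiling $\{S_kh:h\in H_k\}$ is $H_k$-\inv, and the identity $S_{k+1}h'=\bigsqcup_{r\in R_k}S_k(rh')$ exhibits the level-$(k+1)$ tiling as a refinement of the level-$k$ one in a way determined solely by the shape. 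Hence the layers $\mathbf T_k$ (orbit closures of these periodic tilings, each a finite set of size $[G:H_k]$) assemble into a \emph{congruent and deterministic} F\o lner system of tilings $\mathbf T=\overset\leftarrow{\lim\limits_k}\mathbf T_k$, provided the $S_k$ are F\o lner; since every layer is finite, $\mathbf T$ has zero \tl\ entropy and is a \tl\ factor of the $G$-odometer $\Omega=\overset\leftarrow{\lim\limits_k}G/H_k$. Finally, residual finiteness furnishes a zero entropy $G$-Toeplitz subshift $Z$ over a finite alphabet together with a factor map onto $\Omega$; composing with $\Omega\to\mathbf T$ yields the desired $\phi\colon Z\to\mathbf T$.

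\emph{The main obstacle.} The reduction and the splicing are formal; the substance is the main construction, and two points need care. First, one must be able to choose the transversals $S_k=R_0R_1\cdots R_{k-1}$ to be \emph{F\o lner}; this is where amenability must be reconciled with the rigid nested-coset structure, and it may force a preliminary refinement of the subgroup \sq\ (for $G=\Z$ one simply takes intervals, but for general $G$ the existence of such F\o lner monotilings along a chain is exactly the nontrivial input). Second---and I expect this to be the crux---one must build a \emph{zero entropy} $G$-Toeplitz subshift factoring onto $\Omega$ for an arbitrary residually finite amenable $G$, i.e.\ fill the Toeplitz pattern along the cosets of the $H_k$ using only finitely many symbols while keeping the \tl\ entropy at zero. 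It is precisely this Toeplitz encoding---available for residually finite groups but not expected for general amenable groups---that permits upgrading the quasi-symbolic extension to a symbolic one; the same mechanism yields the parallel improvement of Theorem~\ref{ahe-am} recorded in Remark~\ref{remark}.
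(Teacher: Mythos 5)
Your proposal is correct and follows essentially the same route as the paper (which defers the details to \cite{DZ}): reduce Theorem~\ref{fc} to producing a zero-entropy tiling system of $G$ that admits a zero-entropy symbolic extension, splice that extension into the quasi-symbolic extension of Theorem~\ref{quasi} via a fiber product over the common tiling factor (principal because the added subshift has zero entropy), and obtain the tiling system from a nested sequence of finite-index normal subgroups with F\o lner transversals together with a Toeplitz encoding of the resulting $G$-odometer. The two points you flag as the crux are precisely the known inputs invoked there, namely Weiss's congruent monotileability of residually finite amenable groups \cite{W0} and the existence of zero-entropy Toeplitz subshift extensions of $G$-odometers for residually finite $G$.
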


The second class of groups relies on the so-called \emph{comparison property}. Comparison originates in the theory of \mbox{$C^*$-al}geb\-ras, but the most important for us ``dynamical'' version concerns group actions on compact metric spaces. In this setup it was defined by Cuntz \cite{Cu} and further investigated by R\o rdam \cite{MR1,MR2} and by Winter \cite{W}. As in the case of many other properties and notions in dynamical systems, the most fundamental form of comparison occurs in $\Z$-actions. For a wider generality, we refer the reader to a recent paper by Kerr \cite{K}, where the notion is defined for other actions including \tl\ and measure-preserving ones. We will focus on a particular case where a countable amenable group acts on a \zd\ compact metric space.

\begin{definition}\label{defcom}
Recall that $G$ is assumed to be a discrete countably infinite amenable group, and let $(X, G)$ be a \zd\ action. We say that
\begin{enumerate}

\item $(X, G)$ \emph{admits comparison} if for any pair of clopen sets $\mathsf A,\mathsf B\subset X$, the condition that $\mu(\mathsf A)<\mu(\mathsf B), \forall \mu\in \mathcal{M} (X, G)$, implies that $\mathsf A$ is \emph{subequivalent} to $\mathsf B$, denoted by $\mathsf A\preccurlyeq \mathsf B$, that is, there exists a finite partition $\mathsf A=\bigcup_{i=1}^k \mathsf A_i$ of $\mathsf A$ into clopen sets and there are elements $g_1, \dots, g_k$ of $G$ such that $g_1(\mathsf A_1), \dots, g_k(\mathsf A_k)$ are disjoint subsets of $\mathsf B$.\footnote{Clearly, $\mathsf A\preccurlyeq \mathsf B$ implies $\mu(\mathsf A)\le\mu(\mathsf B), \forall \mu\in \mathcal{M} (X, G)$, and so in some sense the comparison is ``nearly'' an equivalence between subequivalence and the inequality for all \im s.}

\item $G$ has the \emph{comparison property} if all \zd\ $G$-actions admit comparison.
\end{enumerate}
\end{definition}

\begin{remark}
As shown in \cite{DZ}, the action $(X, G)$ admits comparison if and only if it admits \emph{weak comparison}, that is, there exists a constant $C\ge 1$ such that for any clopen sets $\mathsf A,\mathsf B\subset X$, the condition $\sup \{\mu(\mathsf A): \mu\in \mathcal{M} (X, G)\}<\frac1C\inf \{\mu(\mathsf B): \mu\in \mathcal{M} (X, G)\}$ implies $\mathsf A\preccurlyeq \mathsf B$.
\end{remark}

The following results establish a strong connection between comparison and the existence of tiling systems with special properties.

\begin{theorem} \label{dt}
Let $(X, G)$ be a free \zd\ action. Then the action admits comparison  if and only if it has a tiling system as a \tl\ factor.
\end{theorem}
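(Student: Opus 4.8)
The plan is to prove both implications, in each case translating the measure-theoretic hypotheses into combinatorial counting statements via a F\o lner-averaging (empirical distribution) argument. Throughout I would freely use that for clopen $\mathsf A,\mathsf B\subset X$ the maps $\mu\mapsto\mu(\mathsf A)$ and $\mu\mapsto\mu(\mathsf B)$ are continuous on the compact space $\MGX$, that comparison is equivalent to weak comparison (by the Remark following Definition~\ref{defcom}), and that a free \zd\ action always factors onto a F\o lner system of \qt s (cf.\ Proposition~\ref{OW} and \cite{DZ}). I expect the implication \emph{comparison $\Rightarrow$ tiling system factor} to be the hard, constructive direction, while \emph{tiling system factor $\Rightarrow$ comparison} follows from a within-tile marriage argument.

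For the direction \emph{tiling system factor $\Rightarrow$ comparison}, fix clopen $\mathsf A,\mathsf B$ with $\mu(\mathsf A)<\mu(\mathsf B)$ for every $\mu\in\MGX$. Since $\mu\mapsto\mu(\mathsf B)-\mu(\mathsf A)$ is continuous and positive on the compact set $\MGX$, it is bounded below by some $\delta>0$. First I would upgrade this uniform measure gap to a \emph{uniform combinatorial} gap: there exist a finite $F\subset G$ and $\eta>0$ such that in every $(F,\eta)$-\inv\ finite window $W\subset G$ and at every $x\in X$ one has $|\{g\in W: g(x)\in\mathsf B\}|-|\{g\in W: g(x)\in\mathsf A\}|\ge\tfrac{\delta}{2}|W|$. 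Indeed, were this to fail, I could choose windows $W_n$ becoming invariant and points $x_n$ violating it, and the weak-$*$ limit of the empirical distributions of the orbit segments $(g(x_n))_{g\in W_n}$ would, by the F\o lner property, be an \im\ $\mu$ with $\mu(\mathsf B)-\mu(\mathsf A)\le\tfrac{\delta}{2}<\delta$, a contradiction. I would then select a layer $\T_k$ of the tiling system factor whose shapes are all $(F,\eta)$-\inv; for each $x$ the tiles of $\CT_k$ partition $G$, and the gap estimate applied to each tile as a window forces strictly more $\mathsf B$-positions than $\mathsf A$-positions inside it. Choosing any injection from $\mathsf A$-positions to $\mathsf B$-positions within each tile, and grouping the points $x\in\mathsf A$ according to the displacement $h$ (with $h(x)$ the target) assigned to the position $e$ of the central tile, yields a finite clopen partition $\mathsf A=\bigcup_i\mathsf A_i$ and elements $g_i\in\bigcup_{S\in\CS_k}SS^{-1}$ with $g_i(\mathsf A_i)\subset\mathsf B$; the displacement set is finite because $e$ lies in the central tile forces $g_i\in SS^{-1}$. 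Disjointness of the images follows from injectivity of the within-tile matchings together with the partition property of the tiles, freeness guaranteeing that each target point of $\mathsf B$ determines its position unambiguously. This is exactly $\mathsf A\preccurlyeq\mathsf B$, so the action admits comparison.

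For the converse \emph{comparison $\Rightarrow$ tiling system factor}, I would start from a F\o lner system of \qt s $\bigvee_k\T_k$ realized as a \tl\ factor of $(X,G)$ and upgrade it, scale by scale, to an exact (partition) dynamical tiling using comparison. At scale $k$, after passing to the disjoint cores of the $\eps_k$-\qt\ I obtain a disjoint family of tiles leaving an uncovered remainder $R_k(x)$ of small lower Banach density; correspondingly the clopen set $\mathsf R_k=\{x: e\in R_k(x)\}$ has $\mu(\mathsf R_k)$ small uniformly in $\mu$, hence $\mu(\mathsf R_k)<\mu(\bigcup\CT_k)$ for all $\mu$. Comparison then supplies a subequivalence of $\mathsf R_k$ into the union of the tiles, i.e.\ a clopen, equivariant, injective rule matching each remainder position to an occupied one; absorbing each remainder cell into the tile thereby indicated produces a genuine dynamical tiling as a factor. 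Running this with $\eps_k\to0$ and increasingly invariant shapes preserves the F\o lner property, and by refining so that the level-$(k{+}1)$ partition is coarser than the level-$k$ one I would arrange congruency; the congruent system is then made deterministic by the shape-duplication procedure (cf.\ the footnote to the definition of determinism and \cite{DH}), giving a tiling system as a \tl\ factor.

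The crux, and the step I expect to be the main obstacle, is the equivariant absorption of the remainder in the converse direction. Comparison as stated yields \emph{some} finite partition and \emph{some} transporting elements with no a priori bound on their size, whereas to keep the collection of shapes $\CS_k$ finite and the shapes $(K,\eps)$-\inv\ I must effect the absorption by uniformly bounded (``local'') displacements and coordinate the choices across scales so as to maintain congruency and \tl\ entropy zero. Handling this requires a banded, quantitative form of comparison together with careful inductive bookkeeping, and it is precisely here that both freeness (legitimizing the marker/center construction and the unambiguous recovery of positions) and zero-dimensionality (keeping every decision clopen) are indispensable.
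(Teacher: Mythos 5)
The survey states Theorem \ref{dt} without proof (it is quoted from \cite{DZ}), so your proposal has to be measured against the argument given there. Your first direction is essentially correct and is the standard argument: the uniform gap $\mu(\mathsf B)-\mu(\mathsf A)\ge\delta$ on the compact set $\MGX$, upgraded by the empirical-measure contradiction to a pointwise counting gap over all sufficiently \inv\ windows, followed by a within-tile injection of $\mathsf A$-positions into $\mathsf B$-positions with displacements in the finite set $\bigcup_{S\in\CS_k}SS^{-1}$. The one point to make explicit is that the injection must be chosen \emph{canonically} as a function of the local pattern (the shape of the central tile together with the trace of $\mathsf A$ and $\mathsf B$ on it), so that the level sets of the displacement map are clopen; you gesture at this via zero-dimensionality, and that part is fine.

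The converse is where the genuine gap lies, and you have located it yourself but not closed it. Two concrete problems. First, the inequality $\mu(\mathsf R_k)<\mu\bigl(\bigcup\CT_k\bigr)$ is far too weak to control the absorption: comparison returns transporting elements $g_1,\dots,g_m$ with no quantitative control, and since every covered position may receive a remainder point, a single tile $Sc$ can in the worst case absorb on the order of $|S|$ extra points, sitting at relative positions in $\bigcup_i g_i^{-1}S$. The augmented shape is then no longer an $\eps$-modification of $S$, its $(K,\eps)$-invariance is destroyed, and the collection of possible augmented shapes, while finite, no longer yields a F\o lner system of tilings. Repairing this requires comparing $\mathsf R_k$ against a much thinner clopen target --- one meeting each tile in at most an $\eps$-proportion of its points while still having measure uniformly larger than $\mu(\mathsf R_k)$ --- or an iterated absorption scheme; this is exactly the ``banded, quantitative form of comparison'' you name, and it is the actual content of the proof in \cite{DZ}, not a routine afterthought. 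Second, congruency is not obtained by ``refining so that the level-$(k{+}1)$ partition is coarser'': one must build the level-$(k{+}1)$ \qt\ from the outset out of unions of level-$k$ tiles (quasitiling by $\CT_k$-saturated F\o lner sets) and then perform the absorption at level $k{+}1$ without disturbing level $k$; this inductive coordination is a substantial construction in its own right (cf.\ \cite{DHZ}). So the architecture of your converse matches \cite{DZ}, but the two steps that make it work are the ones left open.
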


\begin{theorem}\label{ddt}
Assume that $G$ has the comparison property. Then there exists a zero entropy tiling system of $G$, such that the system admits a principal symbolic extension.
\end{theorem}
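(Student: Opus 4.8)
The plan is to reduce the theorem to the construction of a single \emph{free zero-dimensional subshift of zero topological entropy}, and then to read off both the required tiling system and the principal symbolic extension from it using Theorem \ref{dt}. The reduction rests on the observation that, once we produce a zero entropy tiling system $\mathbf T$ together with a zero entropy subshift $Y$ and a factor map $\pi\colon Y\to\mathbf T$, principality is automatic.

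First I would record this automatic principality. Let $Y$ be a subshift with $\htop(Y,G)=0$ and let $\pi\colon Y\to\mathbf T$ be a factor map onto a tiling system with $\htop(\mathbf T,G)=0$. By definition $Y$ is then a symbolic extension of $\mathbf T$. Fix $\nu\in\M(Y,G)$ and put $\mu=\pi\nu\in\M(\mathbf T,G)$. Since $\htop(\mathbf T,G)=0$ we have $h_\mu(\mathbf T,G)=0<\infty$, so the formula recorded in \S\ref{amenable} applies and yields $h_\nu(Y|\mathbf T)=h_\nu(Y,G)-h_\mu(\mathbf T,G)$. But $\htop(Y,G)=0$ forces $h_\nu(Y,G)=0$, whence $h_\nu(Y|\mathbf T)=0$ for every $\nu\in\M(Y,G)$; that is, $\pi$ is principal. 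Thus it suffices to exhibit \emph{one} zero entropy subshift that factors onto a tiling system.

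Next I would invoke the comparison property through Theorem \ref{dt}. Suppose we are given a free zero-dimensional subshift $Y$ with $\htop(Y,G)=0$. Being zero-dimensional, $Y$ admits comparison because $G$ has the comparison property; hence by Theorem \ref{dt} the action $Y$ has a tiling system $\mathbf T$ as a topological factor, via some $\pi\colon Y\to\mathbf T$. Since topological entropy does not increase when passing to a factor of an amenable group action (Remark \ref{202002111720}), we get $\htop(\mathbf T,G)\le\htop(Y,G)=0$, so $\mathbf T$ is a \emph{zero entropy} tiling system. Combined with the preceding paragraph, $Y$ is then a principal symbolic extension of $\mathbf T$, which is exactly the asserted conclusion.

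It remains to construct the free zero-dimensional subshift $Y$ of zero topological entropy, and I expect this to be the main obstacle, absorbing essentially all of the work and all of the use of the hypothesis. Over an arbitrary countable amenable group the existence of such a subshift is open (as noted at the start of Part \ref{1part}, it is not even known in general whether a free symbolic action of zero entropy exists), so the whole point of the comparison property is to remove this obstruction. Concretely, I would build $Y$ from a deterministic, zero entropy F\o lner system of tilings as in Theorem \ref{fs}, using comparison to secure tilings that are simultaneously highly invariant, exactly covering (rather than merely $(1-\eps)$-covering), and decorated with distinguished markers that destroy all periodicity so that the resulting symbolic coding is free; the congruency and determinism of the tiling layers then keep the symbolic complexity subexponential and force $\htop(Y,G)=0$. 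The genuine difficulty is precisely to pass from the zero-dimensional inverse-limit tiling system of Theorem \ref{fs}, which is in general \emph{not} a subshift, to an honest finite-alphabet free system of zero entropy lying above it; this is the step where the comparison property is indispensable.
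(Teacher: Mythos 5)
Your first two steps are sound. If $Y$ is a subshift with $\htop(Y,G)=0$ factoring onto a zero entropy tiling system $\mathbf T$, the extension is indeed principal, since $h_\nu(Y|\mathbf T)=h_\nu(Y,G)-h_{\pi\nu}(\mathbf T,G)=0$ for every $\nu\in\M(Y,G)$; and if such a $Y$ is moreover \emph{free}, then Theorem \ref{dt} combined with the comparison property of $G$ yields a tiling system as a topological factor, necessarily of zero entropy. The genuine gap is the input you feed into this machine: a \emph{free} subshift of zero topological entropy over a general countable amenable group. That is precisely Problem \ref{free-sym} of this survey, which is open, and the discussion following that problem says explicitly that the existence of such a subshift is ``exactly what we are missing'' to run the reduction you propose via Theorem \ref{dt}. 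Your assertion that ``the whole point of the comparison property is to remove this obstruction'' is not supported by anything here: comparison is not known to imply the existence of a free zero entropy subshift, and your sketch (decorating the tilings of Theorem \ref{fs} with markers that ``destroy all periodicity'') does not constitute an argument. The tiling system of Theorem \ref{fs} is an inverse limit over infinitely many alphabets, not a subshift; collapsing it to a single finite-alphabet system while simultaneously forcing freeness and keeping the entropy at zero is exactly the unsolved difficulty (free subshifts exist over every countable group, but controlling their entropy is the open part). In short, you have reduced the theorem to an open problem rather than to something provable.

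The construction in \cite{DZ} avoids this trap by never applying Theorem \ref{dt} to a pre-existing free subshift. Comparison is used \emph{inside} the inductive construction of the tiling system itself: at each stage, subequivalence of suitable clopen sets (which comparison guarantees from inequalities of measures) is used to locate the centers of the $(k+1)$st dynamical tiling as a clopen subset of the system generated by the tilings already built, so that each layer $\T_{k+1}$ becomes a topological factor of the preceding ones. The inverse limit is then a factor of a single zero entropy subshift, and that subshift is the desired symbolic extension; principality is then automatic for exactly the reason you noted. So your observation about principality is correct and is genuinely part of the argument, but the symbolic extension has to be manufactured together with the tiling system, not found above an aperiodic zero entropy subshift whose existence nobody can currently establish.
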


Directly from Theorems \ref{quasi} and \ref{ddt}, one derives the full version of the Symbolic Extension Entropy Theorem for groups with the comparison property:

\begin{theorem}\label{coc}
Let $\mathcal{H}$ be an \ens\ of an action $(X, G)$, where the group $G$ has the comparison property and the action $(X, G)$ has finite topological entropy. Then $\EA$ is a (finite) affine \se\ of the structure $\H$ if and only if there exists a symbolic extension $\pi: Y\to X$ such that $\EA=h^\pi$.
\end{theorem}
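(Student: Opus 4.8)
The plan is to obtain this full Symbolic Extension Entropy Theorem as a direct consequence of the Quasi-Symbolic Extension Entropy Theorem (Theorem~\ref{quasi}) and Theorem~\ref{ddt}, the idea being to run the construction underlying Theorem~\ref{quasi} with a carefully chosen tiling system and then to upgrade the resulting quasi-symbolic extension to a genuine subshift without altering any entropy. The implication $\Longleftarrow$ is the easy one: if a symbolic extension $\pi:Y\to X$ satisfies $\EA=h^\pi$, then Theorem~\ref{easy} already gives that $h^\pi$ is an affine \se\ of $\H$, and it is finite because the subshift $Y$ has finite \tl\ entropy (Remark~\ref{202002111720}), whence $h^\pi\le\htop(Y)<\infty$.

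For the hard implication $\Longrightarrow$, assume $\EA$ is a finite affine \se\ of $\H$. Since $G$ has the comparison property, Theorem~\ref{ddt} provides a zero entropy tiling system $\mathbf T$ of $G$ together with a \emph{principal} symbolic extension $\sigma:Z\to\mathbf T$, where $Z$ is a subshift. The construction proving Theorem~\ref{quasi} is free to take \emph{any} zero entropy tiling system as its input (its mere existence, guaranteed by Theorem~\ref{fs}, being the only requirement), so I would feed it exactly this $\mathbf T$. This yields a quasi-symbolic extension $\bar\pi:\bar Y\to X$ with $\bar Y=Y\vee\mathbf T$ a \tl\ joining of a subshift $Y$ with $\mathbf T$, and $h^{\bar\pi}=\EA$.

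Next I would excise the non-symbolic factor $\mathbf T$ by pulling $\sigma$ back along the projection $p:\bar Y\to\mathbf T$. Concretely, form the fibre product
$$
W=\{(\bar y,z)\in\bar Y\times Z:\ p(\bar y)=\sigma(z)\}.
$$
Viewing $\bar Y$ inside $Y\times\mathbf T$ and noting that $\sigma(z)$ is determined by $z$, one identifies $W$ with the closed shift-\inv\ subset $\{(y',z)\in Y\times Z:\ (y',\sigma(z))\in\bar Y\}$ of the product subshift $Y\times Z$; thus $W$ is itself a subshift. The coordinate projection $q:W\to\bar Y$ is a surjective (because $\sigma$ is surjective) factor map, so $\psi=\bar\pi\circ q:W\to X$ is a symbolic extension of $X$. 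To see that $\psi$ realises the prescribed entropy function, it suffices to check that $q$ is principal. On $W$ the Borel $\sigma$-algebra is generated by the factor $\sigma$-algebras $\mathcal B_{\bar Y}$ and $\mathcal B_Z$, while $\mathbf T$ is a common factor, so $\mathcal B_{\mathbf T}\subset\mathcal B_{\bar Y}$; hence, writing $\lambda$ for the projection of $\nu\in\mathcal M(W,G)$ to $Z$ and using monotonicity of conditional entropy under enlarging the conditioning $\sigma$-algebra,
$$
h_\nu(W|\bar Y)=h_\nu(\mathcal B_Z|\mathcal B_{\bar Y})\le h_\nu(\mathcal B_Z|\mathcal B_{\mathbf T})=h_\lambda(Z|\mathbf T)=0,
$$
the last equality being the principality of $\sigma$. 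Consequently $h_\nu(W)=h_{q\nu}(\bar Y)$ for every $\nu$, and every measure on $\bar Y$ lifts to $W$; pushing down along $\psi$ then gives, for each $\mu\in\MGX$, $h^\psi(\mu)=\sup\{h_\rho(\bar Y):\bar\pi\rho=\mu\}=h^{\bar\pi}(\mu)=\EA(\mu)$, as required.

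The step I expect to be the crux is the compatibility of the two cited theorems: one must be certain that the tiling system produced by Theorem~\ref{ddt} can indeed serve as the tiling-system input of the (oracle-based) construction behind Theorem~\ref{quasi}, so that the $\mathbf T$ appearing in the quasi-symbolic extension is precisely the one carrying a principal symbolic extension. This matters because, for general amenable $G$, an arbitrary zero-dimensional zero entropy system --- in particular a generic tiling system --- is not known to admit any symbolic extension at all; it is only the special $\mathbf T$ of Theorem~\ref{ddt} that can be grafted. Once this alignment is secured, the remaining analytic content is just the displayed conditional-entropy inequality, which is where the principality of $\sigma$, and hence ultimately the comparison property, is consumed.
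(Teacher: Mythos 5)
Your proposal is correct and follows exactly the route the paper indicates, namely deriving Theorem~\ref{coc} ``directly from Theorems~\ref{quasi} and~\ref{ddt}'': you feed the special tiling system of Theorem~\ref{ddt} into the construction behind Theorem~\ref{quasi} and then replace the tiling factor by its principal symbolic extension via a fibre product, checking that principality of $\sigma$ forces $h_\nu(W|\bar Y)=0$ so that the extension entropy function is unchanged. Your explicit fibre-product bookkeeping and the flagged compatibility issue (that the quasi-symbolic construction accepts an arbitrary prescribed zero entropy tiling system as input) are precisely the details the paper leaves implicit, and they are handled correctly.
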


In this context it is natural to ask which amenable groups have the comparison property. In fact, up to date there is no characterization of such groups, it not even excluded that all countable amenable groups enjoy this property. We succeeded in proving the comparison property for a large class of groups, the class of all \emph{subexponential groups}.

 \begin{definition}
A countable group $G$ (not necessarily finitely generated) is called \emph{subexponential} if every its finitely generated subgroup $H$ has \emph{subexponential growth}, that is, there exists a generator $R$ of $H$ such that $|(R\cup R^{-1})^n|$ grows subexponentially,\footnote{It is very easy to see that the subexponential growth of a finitely generated group $G$ implies subexponential growth of $|K^n|$ for any finite set $K\subset G$ and thus does not depend on the choice of a finite generator.} i.e.\
$$
\lim_{n\to\infty}\frac1n\log|(R\cup R^{-1})^n|=0.
$$
\end{definition}

It is known that all subexponential groups are amenable, hence, unlike in the case of residually finite groups we do not need to separately assume amenability.
The following result is \cite[Theorem 6.33]{DZ}:

\begin{theorem}\label{main}
Every subexponential group $G$ has the comparison property.
\end{theorem}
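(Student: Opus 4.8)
The plan is to reduce the comparison property---a statement quantified over all \zd\ $G$-actions and all pairs of clopen sets---to a purely combinatorial density property, and then to isolate the single point at which subexponential growth is indispensable. By the equivalence of comparison and weak comparison recorded just after Definition \ref{defcom}, it suffices to produce one constant $C\ge 1$, depending only on $G$, such that $\sup_\mu\mu(\mathsf A)<\frac1C\inf_\mu\mu(\mathsf B)$ forces $\mathsf A\preccurlyeq\mathsf B$. So I fix an arbitrary \zd\ action $(X,G)$ and clopen sets $\mathsf A,\mathsf B$ obeying this gap, and observe that a subequivalence in the sense of Definition \ref{defcom} is exactly an injective, finite-range, clopen matching carrying $\mathsf A$ into $\mathsf B$ along the action: finitely many $g_i$ lying in a prescribed finite window $K\subset G$, together with the clopen partition $\mathsf A=\bigcup_i\mathsf A_i$. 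I would encode this as a matching saturating the $\mathsf A$-side in the bipartite graph whose vertices are the $\mathsf A$- and $\mathsf B$-points of an orbit, with an edge joining $x$ to $y$ whenever $y=g(x)$ for some $g\in K$.

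First I would convert the measure hypothesis into orbit densities: by the ergodic decomposition and the standard description of lower Banach density through invariant measures (as in the formula for $\underline D$ in Definition \ref{qt}), the condition $\mu(\mathsf A)<\mu(\mathsf B)$ for every $\mu\in\MGX$ is equivalent to a \emph{uniform} density deficit of $\mathsf A$-positions relative to $\mathsf B$-positions inside every sufficiently invariant F\o lner set, along every orbit. The matching itself is then built by a Hall/Rado-type marriage theorem for locally finite bipartite graphs, carried out $G$-equivariantly and clopen-measurably so that the finitely many translations $g_i$ and the clopen pieces $\mathsf A_i$ demanded by Definition \ref{defcom} are genuinely delivered; freeness is not needed here, so the general (not necessarily free) \zd\ action is covered. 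The whole construction reduces to verifying Hall's expansion condition: for every finite set $W$ of $\mathsf A$-positions in an orbit, the neighborhood $KW$ contains at least $|W|$ distinct $\mathsf B$-positions.

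The hard part, and the only place where subexponentiality enters, is precisely this expansion estimate, since a finite $W$ may be so badly clustered that $|KW|$ barely exceeds $|W|$. I would argue by contradiction: a persistent failure of Hall's condition on sets $W$ distributed through $G$ would, after averaging over a F\o lner sequence, manufacture an invariant measure with $\mu(\mathsf A)\ge\mu(\mathsf B)$, contradicting the gap. The averaging inflates $W$ by iterated $K$-neighborhoods and by F\o lner sets, and the multiplicative error it incurs is controlled by the ratios $|K^mF_n|/|F_n|$. Here subexponential growth of every finitely generated subgroup is exactly what guarantees that these inflation factors can be driven to $1$: the balls $(R\cup R^{-1})^n$ furnish a F\o lner sequence with $|(R\cup R^{-1})^{n+1}|/|(R\cup R^{-1})^n|\to 1$ along a subsequence (otherwise the growth would be exponential), so the density deficit survives the inflation with room to spare---this is the source of the constant $C$. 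In a general amenable group with exponential growth the inflation can swallow the deficit, which is why comparison is open there, whereas subexponential growth is tailored to close precisely this estimate. Assembling the local matchings into the global clopen subequivalence then yields weak comparison, hence comparison, for every \zd\ action, which is the assertion of Theorem \ref{main}.
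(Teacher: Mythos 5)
Your reduction to weak comparison, your translation of the measure gap into a uniform orbitwise surplus of $\mathsf B$-visits over $\mathsf A$-visits in all translates of a fixed F\o lner set, and your identification of subexponential growth as the tool controlling the inflation ratios $|K^{n+1}|/|K^{n}|$ all point in the right direction. (Note that this survey contains no proof to compare against: it only cites \cite[Theorem 6.33]{DZ}, where the argument is a direct finitary exchange construction, not an appeal to a marriage theorem.) The central gap in your proposal is the matching step. A subequivalence $\mathsf A\preccurlyeq\mathsf B$ is not merely a perfect matching of $\mathsf A$-positions to $\mathsf B$-positions along each orbit: Definition \ref{defcom} demands a \emph{clopen} partition $\mathsf A=\bigcup_{i=1}^{k}\mathsf A_i$ together with \emph{finitely many} group elements $g_1,\dots,g_k$. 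The Hall--Rado theorem applied orbit by orbit produces a matching with no continuity, no equivariance, and no uniform control, and it is a well-known phenomenon of descriptive combinatorics that bounded-degree Borel bipartite graphs can satisfy Hall's condition while admitting no Borel (let alone clopen) perfect matching. Writing ``carried out $G$-equivariantly and clopen-measurably'' names this difficulty rather than resolving it; making that step work is where essentially all of the labor lies.

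The second gap is the verification of the Hall-type expansion for a \emph{fixed} finite window $K$. The density surplus on translates of a fixed F\o lner set does not, by the natural double-counting, yield ``at least $|W|$ many $\mathsf B$-positions in $KW$'' for every finite set $W$ of $\mathsf A$-positions: the covering argument loses a factor of $|K|$ when $W$ is clustered at a scale comparable to a few multiples of $K$, and repairing it by passing to iterated neighborhoods $K^{n}W$ with $n=n(W)$ chosen via subexponentiality so that $\bigl|K^{n+1}W\bigr|\le(1+\eps)\bigl|K^{n}W\bigr|$ produces a displacement depending on $W$, hence an unbounded set of translations---incompatible with the finitely many $g_i$ in the definition of subequivalence. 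Resolving this tension between the variable scale forced by the growth argument and the fixed scale forced by the definition is exactly what the proof in \cite{DZ} accomplishes, by performing the exchange in a \emph{uniformly bounded} number of rounds in which the unmatched remainder shrinks by a definite factor each time. Finally, your averaging-to-contradiction sketch needs the role of finitely generated subgroups made precise: the balls $(R\cup R^{-1})^{n}$ are F\o lner only in the subgroup they generate, not in $G$, so they cannot simultaneously serve as the averaging sets that manufacture an invariant measure for the $G$-action and as the neighborhoods whose slow growth you exploit.
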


\begin{remark}
It is known (but never published, see \cite{Ph} and also \cite{SG}) that finitely generated groups with a symmetric F\o lner \sq\ satisfying the so-called Tempelman's condition (this includes all nilpotent, in particular Abelian, groups) have the comparison property, but beyond this case not much was known before \cite{DZ}.
The class of subexponential groups discussed here covers all virtually nilpotent groups (which have polynomial growth) but also other groups, for instance those with intermediate growth (subexponential but faster than polynomial). The most known example of such groups is the Grigorchuk group \cite{Gr}. We remark that by a recent result from \cite{BGT} by Breuillard, Green and Tao, the above mentioned ``Tempelman groups'' turn out to be virtually nilpotent, so the above mentioned result from \cite{Ph} is covered by Theorem \ref{main}.
\end{remark}

Combining Theorems \ref{coc} and \ref{main}, we conclude:

\begin{corollary}
Let $\mathcal{H}$ be an \ens\ of an action $(X, G)$, where the group $G$ is subexponential and the action $(X, G)$ has finite topological entropy. Then $\EA$ is a (finite) affine \se\ of the structure $\H$ if and only if there exists a symbolic extension $\pi: Y\to X$ such that $\EA=h^\pi$.
\end{corollary}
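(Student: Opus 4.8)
The plan is to obtain this statement as an immediate corollary, combining Theorem~\ref{main} with Theorem~\ref{coc}. The key observation is that the hypothesis ``$G$ is subexponential'' is strictly stronger than the hypothesis actually used in Theorem~\ref{coc}, which requires only that $G$ have the comparison property. So no new construction is needed; the work is to verify that the hypotheses line up and then cite the two theorems in the correct order.

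First I would record that subexponential groups are amenable (as noted in the excerpt immediately before Theorem~\ref{main}). Consequently the standing assumption that $G$ is a discrete countably infinite amenable group is automatically in force, and all of the apparatus developed in Part~2 — the entropy structure $\H$, its superenvelopes, the tiled entropy of Theorem~\ref{tte}, and the quasi-symbolic extension construction — applies without modification. This step is purely a matter of confirming that we are inside the framework where the relevant theorems are stated.

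Next I would invoke Theorem~\ref{main}, which asserts precisely that every subexponential group $G$ has the comparison property. This upgrades the subexponentiality assumption into exactly the comparison-property assumption demanded downstream. With the comparison property now available and the finite-topological-entropy hypothesis on $(X,G)$ intact, the hypotheses of Theorem~\ref{coc} are all met. Applying Theorem~\ref{coc} then yields the desired equivalence: $\EA$ is a finite affine \se\ of $\H$ if and only if there exists a symbolic extension $\pi\colon Y\to X$ with $\EA=h^\pi$.

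At the level of this corollary there is essentially no obstacle — it is a one-line deduction. The genuine difficulty is packaged entirely inside the two cited results. Theorem~\ref{main} rests on the delicate argument that subexponential growth forces (weak) comparison, and Theorem~\ref{coc} in turn depends on Theorem~\ref{ddt}, which produces a zero-entropy tiling system admitting a \emph{principal} symbolic extension, combined with the full quasi-symbolic construction of Theorem~\ref{quasi}. Thus, from the standpoint of reproving everything from scratch, the hard part is establishing comparison for subexponential groups and performing the tiling-system-to-symbolic-extension upgrade; the corollary itself merely assembles these ingredients.
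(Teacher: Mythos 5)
Your proposal is correct and matches the paper exactly: the paper derives this corollary by combining Theorem~\ref{main} (subexponential groups have the comparison property) with Theorem~\ref{coc} (the full Symbolic Extension Entropy Theorem for groups with the comparison property), which is precisely your one-line deduction. Your additional remark that subexponentiality already implies amenability, so the standing hypotheses are satisfied, is a correct and harmless bit of bookkeeping.
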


\section{Questions for further study} \label{ques}

We end this survey by listing some questions related to the subject discussed here, which may be of independent interest and useful for further study of amenable group actions.

\medskip

Absolutely the following main question remains still open for us.

\begin{problem} \label{main-Q}
Can we claim the full version of the Symbolic Extension Entropy Theorem for general amenable groups $G$, in other words, does Theorem \ref{seet} hold for any amenable $G$?
\end{problem}

We have every reason to believe that a result similar to Theorem \ref{sex-vp} holds.

\begin{problem} \label{vp-Q}
Is it true that $\mathbf{h_\text{sex}} (X, G)= \sup \{h_\text{sex} (\mu): \mu\in \mathcal{M} (X, G)\}$?
\end{problem}

We remark that a positive answer to Question \ref{main-Q} implies the positive answer to Question \ref{vp-Q}. But the reverse implication need not hold. It may happen that the equality in Question \ref{vp-Q} holds while the function $h_{sex}$ is not equal to $\EH$ (in fact, it can only be larger).
\medskip

Obviously, not every countable amenable group is residually finite. But, by Theorem \ref{coc}, we can hope to resolve Problem \ref{main-Q} by answering the following question affirmatively:

\begin{problem} \label{coc-Q}
Is it true that any countable amenable group $G$ has the comparison property?
\end{problem}

It seems that adding any reasonable assumption about the group should help. But in fact, the following question still remains open.

\begin{problem}
Is it true that any residually finite amenable group $G$ has the comparison property? What about (right or left) orderable countable amenable groups?
\end{problem}

Recall that a quasi-symbolic system is a topological joining of a subshift with a zero entropy tiling system. Thus if we had an affirmative answer to the following Problem \ref{ti}, then each quasi-symbolic system would admit a principal symbolic extension, and so we could use Theorem \ref{quasi} to solve Problem \ref{main-Q}.

\begin{problem}\label{ti}
Does any countable amenable group admit a zero entropy tiling system which admits a principal symbolic extension?
\end{problem}

\medskip
In the context of Theorem \ref{dt} it also seems natural to ask:

\begin{problem} \label{tiling}
Is it true that the group $G$ has the comparison property, if any \zd\ free $G$-action has a tiling system as a \tl\ factor?
\end{problem}

We remark that the standard irrational rotation of the circle is a zero entropy free $\mathbb{Z}$-action, and that any extension of a free action is still free, thus there exists a zero entropy free $\mathbb{Z}$-action which is a symbolic system (the simplest such system is called the \emph{Sturmian subshift}). But for the case of a general amenable group $G$ the existence of a free zero entropy subshift is unknown (in view of Theorem \ref{dt}, this is exactly what we are missing to reduce Problem \ref{ti} to Problem \ref{coc-Q}):

\begin{problem} \label{free-sym}
Is there a zero entropy free $G$-action in form of a subshift?
\end{problem}

The following result from \cite{DHZ} may be worthy to be recorded here.

\begin{theorem} \label{free-zd}
There always exists a \zd\ zero entropy free $G$-action.
\end{theorem}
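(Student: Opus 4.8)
The plan is to realize the desired system as a ``generalized odometer'', that is, as the inverse limit of a congruent \sq\ of dynamical tilings, rather than as a subshift. This detour is forced upon us: a closed shift-invariant subset of $\Lambda^G$ is automatically a subshift, and the existence of a free zero entropy subshift is exactly the open Problem \ref{free-sym}. Hence the free system we build must be non-expansive, and an inverse limit of subshifts over finite alphabets is the natural candidate.

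First I would invoke Theorem \ref{fs} to fix a congruent F\o lner system of tilings $\mathbf T=\overset\leftarrow{\lim\limits_k}\T_k$ of $G$ with zero \tl\ entropy. Being a closed \inv\ subset of $\prod_{k\in\N}{\rm V}_k^G$ with each ${\rm V}_k$ finite, $\mathbf T$ is a \zd\ compact metric $G$-system with the coordinatewise shift action, and its \tl\ entropy vanishes by construction. Thus the first two requirements---\zd ity and zero entropy---come essentially for free from the tiling machinery of \S \ref{prepa}, and all the weight of the proof falls on arranging that the action be free.

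Reading off the shift action in the center notation of \S \ref{prepa}, the shift by $g$ corresponds to right translation of a static tiling by $g^{-1}$, so $g$ fixes a point $\boldsymbol\CT=(\CT_k)_{k\in\N}$ if and only if $g$ is a right period of \emph{every} layer $\CT_k$. Consequently, to force the stabilizer of every point to be trivial it suffices to guarantee the following separation property: enumerating $G\setminus\{e\}=\{g_1,g_2,\dots\}$, for each $n$ there is a level $k(n)$ such that \emph{no} static tiling occurring in $\T_{k(n)}$ admits $g_n$ as a right period; by congruency the same is then automatic for all larger levels. Granting this, no $g_n$ can lie in $\mathrm{stab}(\boldsymbol\CT)$ for any point $\boldsymbol\CT$, i.e.\ the action is free.

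The hard part will be producing tilings that are \emph{aperiodic} in this sense while simultaneously preserving the F\o lner, covering, disjointness, congruency and zero-entropy properties. When $G$ is residually finite one bypasses the difficulty using honestly periodic tilings whose period groups are of finite index and shrink to $\{e\}$---this is precisely the classical odometer and yields freeness at once---but for a general (possibly non-residually-finite) amenable $G$ no such finite-index subgroups exist, so genuine aperiodicity must be manufactured by hand. The plan is to rerun the inductive Ornstein--Weiss style construction behind Proposition \ref{OW} and Theorem \ref{fs}, and, at the stage producing level $k(n)$, to use the $(K,\eps)$-invariance of the shapes (with $g_n\in K$) to decorate the interior of each large tile with a marker pattern chosen so that the resulting configuration has no right period equal to $g_n$; since such markers occupy a negligible proportion of each F\o lner tile, they perturb neither the covering and disjointness parameters nor the entropy estimate. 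The genuine obstacle, however, is that a single marker in a fixed relative position is itself as periodic as the tiling, so the markers across distinct tiles must be laid out so as to destroy $g_n$-periodicity coherently across all levels while respecting congruency and determinism; organizing this aperiodicity uniformly over the whole group is the technical heart of the argument, and is exactly what the refined tiling technology of \cite{DHZ} supplies.
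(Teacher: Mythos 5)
First, a point of reference: the survey does not prove Theorem \ref{free-zd} at all --- it is recorded verbatim from \cite{DHZ} --- so your proposal has to be judged on its own merits rather than against an argument in the text. Your setup is sound: an inverse limit of dynamical tilings is a natural candidate, the computation that $g$ fixes $\boldsymbol\CT=(\CT_k)_{k\in\N}$ if and only if $g$ is a right period of every layer $\CT_k$ is correct, and zero-dimensionality together with zero entropy do come for free from Theorem \ref{fs}. The problem is that the entire content of the theorem is concentrated in the one claim you do not establish: that the levels can be arranged so that, for each $g_n\neq e$, some $\T_{k(n)}$ contains \emph{no} tiling with $g_n$ as a right period. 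Nothing quoted in the survey delivers this. Theorem \ref{fs} carries no aperiodicity guarantee whatsoever; indeed the model example, the $\Z$-odometer built from interval tilings, has \emph{every} level fully periodic (it is free only because the stabilizer subgroups shrink to $\{e\}$ along the inverse limit), which shows both that your sufficient condition is strictly stronger than what Theorem \ref{fs} provides and that it is not even necessary for freeness. You correctly identify the obstruction to repairing this by markers --- a marker in a fixed position relative to each tile inherits all the periods of the tiling, while markers varying from tile to tile must be injected without raising entropy and without violating determinism (in a deterministic system the lower levels are \emph{functions} of the higher ones, so level $k(n)$ cannot be decorated independently) --- but you do not resolve it. Closing with ``this is exactly what the refined tiling technology of \cite{DHZ} supplies'' is circular: Theorem \ref{free-zd} is itself a theorem of \cite{DHZ}, so deferring its key step to that paper leaves the statement unproved.

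It is also worth noting that you have chosen a harder target than necessary. Since any nonempty closed invariant subset of a countable product of \zd\ zero-entropy systems is again \zd\ with entropy zero, it suffices to produce, for each single $g\neq e$, a \zd\ zero-entropy action $X_g$ on which $g$ merely has no fixed point, and then take a countable joining $\bigvee_{g\neq e}X_g$; freeness is immediate and $\htop\le\sum_g\htop(X_g)=0$. This decouples the difficulties element by element instead of demanding a single tiling system that is simultaneously aperiodic at some level for every group element while remaining congruent, deterministic, F\o lner and of zero entropy. For $g$ of infinite order the subshift $\{x\in\{0,1\}^G:\ x_{gh}=x_h+1 \bmod 2\ \forall h\}$ already works (it is closed, shift-invariant, $g$-fixed-point-free since $x_g=x_e+1$, and of zero entropy because the number of right cosets of the infinite group $\langle g\rangle$ meeting a sufficiently invariant F\o lner set is $o$ of its cardinality); the genuinely nontrivial case is that of torsion elements, where the tilings of \cite{DHZ} really enter. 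In either formulation, the per-element aperiodicity construction is the heart of the matter, and it is precisely the step missing from your argument.
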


As mentioned in \S \ref{stru}, the small boundary property plays an important role in the definition of \ens\ for $\mathbb{Z}$-actions.
Small boundary property has been discussed for $\mathbb{Z}^k$-actions in \cite{GLT} and for amenable group actions in \cite{KS, LT}, yet the following question remains open:

\begin{problem} \label{202002111823-1}
Which $G$-actions with finite topological entropy admit a refining sequence of finite Borel partitions with small boundaries?
\end{problem}

We end this survey by recording again the following problem from \cite{W0}. Recall that a finite set $T\subset G$ is a \emph{monotile} if there exists $C\subset G$ such that $\{T c: c\in C\}$ partitions the group $G$.

\begin{problem} \label{monotile}
Is it true that any $G$ possesses a F\o lner \sq\ consisting of monotiles?
\end{problem}


We remark that several typical classes of amenable groups, including residually finite amenable groups, satisfy the above mentioned property \cite{W0}.

\section*{Acknowledgements}

Tomasz Downarowicz is supported by NCN (National Science Center, Poland) grant\break2018/30/M/ST1/00061 and by the Wroc\l aw University of
Science and Technology grant 049U/0052/19. Guohua Zhang is supported by NSFC (National Natural Science Foundation of China) Grants 11671094, 11722103 and 11731003.


\bibliographystyle{amsplain} 




\printindex

\end{document}